\titleformat{\subsection}{\it}{\thesubsection.\enspace}{1pt}{}
\newtheorem{theo}{Theorem}[section]
\newtheorem{lemm}[theo]{Lemma}
\newtheorem{prop}[theo]{Proposition}
\newtheorem{rema}[theo]{Remark}
\numberwithin{equation}{section}
\newcommand\lm{{\lesssim}}
\begin{document}
\title{Global strong solutions and large time behavior to a micro-macro model for compressible polymeric fluids near equilibrium
\hspace{-4mm}
}

\author{ Wenjie $\mbox{Deng}^1$ \footnote{email: detective2028@qq.com},\quad
Wei $\mbox{Luo}^1$\footnote{E-mail:  luowei23@mail2.sysu.edu.cn} \quad and\quad
 Zhaoyang $\mbox{Yin}^{1,2}$\footnote{E-mail: mcsyzy@mail.sysu.edu.cn}\\
 $^1\mbox{Department}$ of Mathematics,
Sun Yat-sen University, Guangzhou 510275, China\\
$^2\mbox{Faculty}$ of Information Technology,\\ Macau University of Science and Technology, Macau, China}

\date{}
\maketitle
\hrule

\begin{abstract}
In this paper, we mainly study the global strong solutions and its long time decay rates of all order spatial derivatives to a micro-macro model for compressible polymeric fluids with small initial data. This model is a coupling of isentropic compressible Navier-Stokes equations with a nonlinear Fokker-Planck equation. We first prove that the micro-macro model admits a unique global strong solution provided the initial data are close to equilibrium state for $d\geq 2$. Moreover, for $d\geq 3$, we also show a new critical Fourier estimation that allow us to give the long time decay rates of $L^2$ norm for all order spatial derivatives.  \\
%Our strategy relies on the results established in our previous work \cite{Zhang-Yin}.
\vspace*{5pt}
\noindent {\it 2010 Mathematics Subject Classification}: 35Q30, 76B03, 76D05, 76D99.

\vspace*{5pt}
\noindent{\it Keywords}: The compressible polymeric fluids; global strong solutions; time decay rate.
\end{abstract}

\vspace*{10pt}

%\phantomsection
%\addcontentsline{toc}{section}{\contentsname}
%添加目录到书签
\tableofcontents

\section{Introduction}
   In this paper, we consider a micro-macro model for compressible polymeric fluids near equilibrium with dimension $d\geq2$ :
   \begin{align}\label{eq0}
\left\{
\begin{array}{ll}
\varrho_t+{\rm div}(\varrho u)=0 , \\[1ex]
(\varrho u)_t+{\rm div}(\varrho u\otimes u)-\rm div\Sigma{(u)}+\frac 1 {Ma^2} \nabla{P(\varrho)}=\frac 1 {De} \frac {\kappa} {r} \rm div~\tau, \\[1ex]
\psi_t+u\cdot\nabla\psi={\rm div}_{q}[- \nabla u \cdot{q}\psi+\frac {\sigma} {De}\nabla_{q}\psi+\frac {1} {De\cdot r}\nabla_{q}\mathcal{U}\psi],  \\[1ex]
\tau_{ij}=\int_{\mathbb{R}^{d}}(q_{i}\nabla_{q_j}\mathcal{U})\psi dq, \\[1ex]
\varrho|_{t=0}=\varrho_0,~~u|_{t=0}=u_0,~~\psi|_{t=0}=\psi_0, \\[1ex]
\end{array}
\right.
\end{align}
where $\varrho(t,x)$ is the density of the solvent, $u(t,x)$ stands for the velocity of the polymeric liquid and $\psi(t,x,q)$ denotes the distribution function for the internal configuration. Here the polymer elongation $q\in\mathbb{R}^{d}$ , $x\in\mathbb{R}^{d}$ and $t\in[0,\infty)$.
%where $\Omega$ is a smooth bounded domain of $\mathbb{R}^d$ or $\Omega=\mathbb{R}^d$ or $\Omega=\mathbb{T}^d$. In the case when $\Omega$ has a boundary, we add the boundary condition $u=0$ on $\partial\Omega$.
The notation $\Sigma{(u)}=\mu(\nabla u+\nabla^{T} u)+\mu'{\rm div}u\cdot Id$ stands for the stress tensor, with $\mu$ and $\mu'$ being the viscosity coefficients satisfying the relation $\mu>0$ and $2\mu+\mu'>0$.
The pressure obeys the so-called $\gamma$-law: $P(\varrho)=a\varrho^\gamma$ with $\gamma\geq1, a>0$. $\sigma$ is a constant satisfing the relation $\sigma=k_BT_a$, where $k_B$ is the Boltzmann constant and $T_a$ is the absolute temperature.  Furthermore, $r>0$ is related to
the linear damping mechanism in dynamics of the microscopic variable $q$, and $\kappa > 0$ is some parameter describing the ratio between kinetic and elastic energy. The parameter $De$ denotes the Deborah number, which represents the ratio of the time scales for elastic stress relaxation, so it characterizes the fluidity of the system. The Mach number $Ma$ describes the ratio between the fluid velocity and the sound speed, which measures the compressibility of the system.
  Moreover, the potential $\mathcal{U}(q)$ follows the same assumptions as that of \cite{2017Global} :
\begin{align}\label{potential1}
\left\{
\begin{array}{ll}
|q|\lesssim(1+|\nabla_q\mathcal{U}|),\\
\Delta_q\mathcal{U}\leq C + \delta|\nabla_q\mathcal{U}|^2,\\
\int_{\mathbb{R}^{d}}|\nabla_q\mathcal{U}|^2\psi_{\infty}dq\leq C,~~\int_{\mathbb{R}^{d}}|q|^4\psi_{\infty}dq\leq C,
\end{array}
\right.
\end{align}
with $\delta\in(0,1)$ and
\begin{align}\label{potential2}
\left\{
\begin{array}{ll}
|\nabla^k_q(q\nabla_q\mathcal{U})|\lesssim(1+|q||\nabla_q\mathcal{U}|),\\
\int_{\mathbb{R}^d}|\nabla^k_q(q\nabla_q\mathcal{U}\sqrt{\psi_{\infty}})|^2dq\leq C,\\
|\nabla^k_q(\Delta_q\mathcal{U}-\frac{1}{2}|\nabla_q\mathcal{U}|^2)|\lesssim(1+|\nabla_q\mathcal{U}|^2),~~~~~
\end{array}
\right.
\end{align}
with the integer $k\in[1,3]$.  The most simplest case among them is the Hookean spring $\mathcal{U}(q)=\frac{1}{2}|q|^2$. From the Fokker-Planck equation, one can derive the following compressible Oldroyd-B equation, which has been studied in depth in \cite{incomprelimDFang,incomprelimZLei,compre1,compre2,2018Global} :
\begin{align}\label{tauequ}
\tau_t + u\cdot\nabla\tau + 2\tau  + Q(\nabla u,\tau) = Du~,~~~~~~Du^{ij}=\mathop{\sum}\limits_{l,m=1}^d\nabla^lu^m\int_{\mathbb{R}^d}q^lq^mq^iq^j e^{-|q|^2}dq~.
\end{align}
It is universally  known that the system \eqref{eq0} can be used to described the fluids coupling polymers. The system is of great interest in many branches of physics, chemistry, and biology, see \cite{Bird1977,Doi1988}. In this model, a polymer is idealized as an ``elastic dumbbell" consisting of two ``beads" joined by a spring that can be modeled by a vector $q$. The polymer particles are described by a probability function $\psi(t,x,q)$ satisfying that $\int_{\mathbb{R}^{d}}
\psi(t,x,q)dq =1$, which represents the distribution of particles' elongation vector $q\in \mathbb{R}^{d}$. At the level of liquid, the system couples the Navier-Stokes equation for the fluid velocity with a Fokker-Planck equation describing the evolution of the polymer density. One can refer to  \cite{Bird1977,Doi1988,Masmoudi2008,Masmoudi2013} for more details.

%1.环境
%\begin{xxxx}
%
%\end{xxxx}
%
% 如 document、Theorem、align、itemize ……。
%
%2.如自己命名公式编号，用命令\tag{...}。
%3.{align*}不带编号。
%4.align 环境中如有多行公式，换行用换行符号 \\ ；如需只显示一个编号，在 \\ 前加命令 \nonumber 。
%5.单行居中公式亦可用 $$……$$，无编号；行内公式 $……$ 。

% 666.在环境，如公式 \begin{align} 后加标签 \label{...},下文如用到说对式 1.2 如何如何，就可以引用 \ref{...} , 即显示 1.2。即使前文修改加上其他式子，仍自动显示对应的编号。

In this paper we will take $a,~\sigma,~\kappa,~r,~De,~Ma$ equal to $1$.
It is easy to check that $(1,0,\psi_{\infty})$ with $$\psi_{\infty}(q)=\frac{e^{-\mathcal{U}(q)}}{\int_{\mathbb{R}^{d}}e^{-\mathcal{U}(q)}dq}~,~~~~~~~~$$
is  a stationary solution to the system \eqref{eq0}. Considering the perturbations near the global equilibrium:
\begin{align*}
\rho=\varrho-1,~~u=u~~~\text{and}~~~g=\frac {\psi-\psi_\infty} {\psi_\infty}~,~~~~~
\end{align*}
then we can rewrite \eqref{eq0} as the following system :
\begin{align}\label{eq1}
	~~~~~~~~~~~~~~~~~~
\left\{
\begin{array}{ll}
\rho_t+\rm divu(1+\rho)=-u\cdot\nabla\rho , \\[1ex]
u_t-\frac 1 {1+\rho} \rm div\Sigma{(u)}+\frac {P'(1+\rho)} {1+\rho} \nabla\rho=-u\cdot\nabla u+\frac 1 {1+\rho} \rm div\tau, \\[1ex]
g_t+\mathcal{L}g=-u\cdot\nabla g-\frac 1 {\psi_\infty}\nabla_q\cdot(\nabla uqg\psi_\infty)-\rm divu-\nabla u q\nabla_{q}\mathcal{U},  \\[1ex]
\tau_{ij}(g)=\int_{\mathbb{R}^{d}}(q_{i}\nabla_{q_j}\mathcal{U})g\psi_\infty dq, \\[1ex]
\rho|_{t=0}=\rho_0,~~u|_{t=0}=u_0,~~g|_{t=0}=g_0, \\[1ex]
\end{array}
\right.
\end{align}
where $\mathcal{L}g=-\frac 1 {\psi_\infty}\nabla_q\cdot(\psi_\infty\nabla_{q}g)$ .

\subsection{Short reviews for the incompressible polymeric fluid models}
Before going into further introduction for the compressible model, we would like to review some well-known results on the mathematical analysis for the incompressible case firstly.
In recent decades, the incompresssible polymeric fluid models have been extensitively studied, since its importance and challenging. M. Renardy \cite{Renardy} established the local well-posedness in Sobolev spaces with potential $\mathcal{U}(q)=(1-|q|^2)^{1-\sigma}$ for $\sigma>1$. Later, B. Jourdain, T. Leli\`{e}vre, and C. Le Bris \cite{Jourdain} proved local existence of a stochastic differential equation with potential $\mathcal{U}(q)=-k\log(1-|q|^{2})$ in the case $k>3$ for a Couette flow. For the co-rotation case, P. L. Lions, N. Masmoudi \cite{Lions-Masmoudi} constructed global weak solution for some Oldroyd models. J. Y. Chemin and N. Masmoudi \cite{lifespan} showed a sufficient condition of non-breakdown for an incompressible viscoelastic fluid of the Oldroyd type by establishing a new priori estimate for 2D Navier-Stokes system
and a losing estimate for the transport equation. We point out that these estimates are of significance in the proof of the strong solutions for viscoelastic fluids. N. Masmoudi, P. Zhang and Z. Zhang \cite{Masmoudi2dfluid} obtained global well-posedness for 2D polymeric fluid models under the co-rotational assumption without any small conditions. In addition, Z. Lei , N. Masmoudi and Y. Zhou \cite{Z.Leinewmethod} provided a new method to prove and improve the criterion for viscoelastic systems of Oldroyd type obtained in \cite{lifespan}. It is worth noting that this new method is much easier and can be widely applied to address other problems involving the losing a prior estimate.
\subsection{Short reviews for the compressible polymeric fluid models}
Significantly, the compressible polymeric fluid models are more physical. When taking density into account,  the models become more difficult and complex to deal with. Z. Lei \cite{incomprelimZLei} first investigated the incompressible limit problem of the compressible Oldroyd-B model in
torus. Recently, D. Fang and R. Zi \cite{incomprelimDFang} studied the global well-posedness for compressible Oldroyd-B
model in critical Besov spaces with $d \geq 2$ . In \cite{2018Global}, Z. Zhou, C. Zhu and R. Zi proved the global well-posedness and decay rates for the 3-D compressible Oldroyd-B model. The compressible Oldroyd-B type model based on the deformation tensor can be found in \cite{compre1,compre2}. Recently, N. Jiang, Y. Liu and T. Zhang \cite{2017Global} employed the energetic variational method to derive a micro-macro model for compressible polymeric fluids and proved the global existence near the global equilibrium with $d=3$.
\subsection{Main results}

The long time behavior for polymeric models \cite{arma06} is first investigated by Jourdain et al. L. He and P. Zhang \cite{He2009} studied the long time decay of the $L^2$ norm to the incompressible Hooke dumbell models and founded that the solutions tends to the equilibrium by $(1+t)^{-\frac{3}{4}}$ when the initial perturbation is additionally bounded in $L^1$. Recently, M. Schonbek \cite{Schonbekdecay} studied the $L^2$ decay of the velocity $u$ to the co-rotation FENE dumbbell model and proved that velocity $u$ tends to zero in $L^2$ by  $(1+t)^{-\frac{d}{4}+\frac{1}{2}}$, $d\geq 2$ with the assumption that $u_0\in L^1$. More recently, W. Luo and Z. Yin \cite{Luo-Yin,Luo-Yin2} improved Schonbek's result and showed that the optimal decay rate of velocity $u$ in $L^2$ should be $(1+t)^{-\frac{d}{4}}$. Moreover, the optimal decay rates of the higher-order spatial derivatives of the two-phase fluid model were obtianed in \cite{2020Wuguochun}.

To our best knowledge, large time behaviour for the compressible polymeric fluid model with potential $\mathcal{U}(q)$ satisfying conditions \eqref{potential1} and \eqref{potential2} has not been studied yet. In this paper, we firstly study global solutions of system $(\ref{eq1})$ under initial data near equilibrium in $H^s$ with $s>1+\frac d 2$ and $d\geq2$. By virtue of Fourier splitting method, we then derive the optimal time decay for all order spatial derivatives of $(\rho,u)$ for $d\geq3$. 

Our main results can be stated as follows.
\begin{theo}\label{th1}
Let $d\geq 2~and~s>1+\frac d 2$. Let $(\rho,u,g)$ be a classical solution of system \eqref{eq1} with the initial data $(\rho_0,u_0,g_0)$ satisfying the conditions $\int_{\mathbb{R}^d} g_{0}\psi_{\infty}dq=0$ and $1+g_0>0$. Then, there exists some sufficiently small constant $\epsilon_0$ such that if
	\begin{align*}
	E_{\lambda}(0)=\|\rho_0\|^2_{H^s}+\|u_0\|^2_{H^s}+\|g_0\|^2_{H^s(\mathcal{L}^2)} + \lambda\|\langle q\rangle g_0\|^2_{H^{s-1}(\mathcal{L}^2)}\leq \epsilon_0~,
\end{align*}
then the compressible system \eqref{eq1} admits a unique global classical solution $(\rho,u,g)$ with $\int_{\mathbb{R}^d} g\psi_{\infty}dR=0$ and $1+g>0$ , and we have
	\begin{align*}
	\sup_{t\in[0,+\infty)} E_{\lambda}(t)+\int_{0}^{\infty}D_{\lambda}(t)dt\leq \epsilon~,
\end{align*}
where $\epsilon$ is a small constant dependent on the viscosity coefficients.
\end{theo}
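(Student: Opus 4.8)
The proof of Theorem~\ref{th1} is a standard continuation argument built on a closed energy estimate, so the plan is to carry it out in three stages: (i) local existence, (ii) a priori estimates via a carefully designed Lyapunov functional $E_\lambda$ with associated dissipation $D_\lambda$, and (iii) a bootstrap. \emph{Local existence and uniqueness} of a classical solution on a maximal interval $[0,T^*)$ with the stated regularity $H^s$, $s>1+\frac d2$, follows from a routine linearization/iteration scheme: the compressible Navier--Stokes part is a symmetrizable hyperbolic--parabolic system, while the Fokker--Planck part for $g$ is parabolic in $q$ with transport in $x$; the propagation of $\int_{\mathbb R^d} g\psi_\infty\,dq=0$ is immediate by integrating the $g$-equation in $q$ (all the right-hand side terms are $q$-divergences or integrate to zero against $\psi_\infty$), and $1+\rho>0$, $1+g>0$ are preserved on a short time by continuity. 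The substantive content is step (ii).

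For the a priori estimates, I would work at the level of $\nabla^k$ for $0\le k\le s$ (or do the top-order estimate in $H^s$ and interpolate). Applying $\nabla^k$ to \eqref{eq1}, testing the $\rho$- and $u$-equations in $L^2_x$ and the $g$-equation in $L^2_x(\mathcal L^2)$ (i.e. against $g\psi_\infty$ in $q$), one gets the basic energy balance. The key microscopic coercivity is the spectral gap for $\mathcal L$: $\int \mathcal L g\, g\,\psi_\infty\,dq=\int|\nabla_q g|^2\psi_\infty\,dq\gtrsim \|g-\langle g\rangle\|^2_{\mathcal L^2}$, and since $\langle g\rangle=\int g\psi_\infty\,dq=0$ this controls $\|g\|_{\mathcal L^2}$ up to the polymeric stress $\tau(g)$; the coupling term $\mathrm{div}\,\tau$ in the $u$-equation and the terms $\mathrm{div}\,u+\nabla u\, q\nabla_q\mathcal U$ in the $g$-equation cancel in the combined estimate (this is the standard micro-macro energy cancellation, using $\tau_{ij}(g)=\int q_i\nabla_{q_j}\mathcal U\, g\psi_\infty\,dq$). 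The dissipation one harvests is $\mu\|\nabla u\|_{H^s}^2$ from the viscosity, $\|\nabla\rho\|_{H^{s-1}}^2$ from the pressure via the standard hyperbolic trick (testing the $u$-equation against $\nabla\rho$ and using the continuity equation to produce the ``effective flux'' cross term, as in Matsumura--Nishida), and $\|\nabla_q g\|_{H^s(\mathcal L^2)}^2$ plus $\|g\|_{H^s(\mathcal L^2)}^2$ modulo $\tau$ from the Fokker--Planck part. Because $\tau(g)$ is only controlled by $\|g\|_{\mathcal L^2}$ and not by $\|\nabla_q g\|_{\mathcal L^2}$, one needs the extra $\lambda\|\langle q\rangle g\|^2_{H^{s-1}(\mathcal L^2)}$ term: running the weighted estimate on the $g$-equation with weight $\langle q\rangle^2$ and exploiting conditions \eqref{potential1}--\eqref{potential2} (which bound $|q|$ by $1+|\nabla_q\mathcal U|$, control $\Delta_q\mathcal U-\frac12|\nabla_q\mathcal U|^2$, etc.) produces a dissipation term $\sim\|\langle q\rangle\nabla_q g\|^2$ that, after a Poincaré-type inequality, dominates $\|\tau(g)\|$ and closes the loop; the small parameter $\lambda$ is chosen to absorb the cross terms generated by this weighted estimate into the main dissipation. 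Summing over $k$ and defining
\begin{align*}
E_\lambda(t)&=\|\rho\|^2_{H^s}+\|u\|^2_{H^s}+\|g\|^2_{H^s(\mathcal L^2)}+\lambda\|\langle q\rangle g\|^2_{H^{s-1}(\mathcal L^2)},\\
D_\lambda(t)&\sim \mu\|\nabla u\|^2_{H^s}+\|\nabla\rho\|^2_{H^{s-1}}+\|g\|^2_{H^s(\mathcal L^2)}+\|\nabla_q g\|^2_{H^s(\mathcal L^2)}+\lambda\|\langle q\rangle\nabla_q g\|^2_{H^{s-1}(\mathcal L^2)},
\end{align*}
one arrives at a differential inequality of the form $\frac{d}{dt}E_\lambda(t)+c\,D_\lambda(t)\le C\sqrt{E_\lambda(t)}\,D_\lambda(t)$, where all the nonlinear terms ($u\cdot\nabla$ transport terms, the $\rho$-dependent coefficients $\frac{1}{1+\rho}$ expanded around $1$, and the quadratic $\nabla u\, q\, g$ term) are estimated by product/commutator estimates in $H^s$ with $s>1+\frac d2$ giving an $L^\infty$ embedding, every such term carrying at least one factor that is part of $\sqrt{E_\lambda}$ and the rest inside $D_\lambda$.

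Finally, the bootstrap: assuming $E_\lambda(t)\le 2\epsilon_0$ on $[0,T]$ with $\epsilon_0$ small enough that $C\sqrt{2\epsilon_0}<c/2$, the differential inequality gives $\frac{d}{dt}E_\lambda+\frac c2 D_\lambda\le 0$, hence $E_\lambda(t)+\frac c2\int_0^t D_\lambda\le E_\lambda(0)\le\epsilon_0$, so in particular $E_\lambda(t)\le\epsilon_0<2\epsilon_0$, which is a strictly better bound; by continuity of $t\mapsto E_\lambda(t)$ this forces $T^*=\infty$, and simultaneously gives the stated global bound with $\epsilon\sim\epsilon_0$. The preservation of $1+\rho>0$ and $1+g>0$ globally follows since $\|\rho\|_{L^\infty}\lesssim\|\rho\|_{H^s}\le\sqrt{\epsilon_0}$ stays small, and similarly for $g$ via the $\mathcal L^2$-with-weights control combined with the structure of the $g$-equation (the bound $\int g\psi_\infty\,dq=0$ persists as noted). \textbf{The main obstacle} is the weighted $\langle q\rangle$ estimate on the Fokker--Planck equation: one must check that the commutators between $\langle q\rangle^2$ and the operators $\mathcal L$, $\frac{1}{\psi_\infty}\nabla_q\cdot(\nabla u\, q\, g\,\psi_\infty)$, and the source $\nabla u\, q\nabla_q\mathcal U$ are all controlled using precisely the potential hypotheses \eqref{potential1}--\eqref{potential2}, and that the resulting ``bad'' terms are either absorbed by the weighted dissipation $\|\langle q\rangle\nabla_q g\|^2$ or are higher-order in $\sqrt{E_\lambda}$; getting the constants to line up so that a single small $\lambda$ and a single small $\epsilon_0$ close everything simultaneously is the delicate bookkeeping at the heart of the proof.
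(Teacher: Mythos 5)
Your proposal matches the paper's proof in all essential respects: local existence by iteration, a Lyapunov functional $E_\lambda$ built from $H^s$ norms plus the weighted term $\lambda\|\langle q\rangle g\|^2_{H^{s-1}(\mathcal L^2)}$, a Matsumura--Nishida cross term $\langle u,\nabla\rho\rangle$ at orders $0$ and $s-1$ to harvest the $\rho$-dissipation, the micro-macro cancellation of the $\tau$ coupling, and a continuation argument from $\frac{d}{dt}E_\lambda+cD_\lambda\le C\sqrt{E_\lambda}\,D_\lambda$. The one place where your stated rationale diverges from what actually happens is the justification for the weighted term: you write that it is needed because $\tau(g)$ is ``only controlled by $\|g\|_{\mathcal L^2}$ and not by $\|\nabla_q g\|_{\mathcal L^2}$,'' but the paper's Lemma \ref{Lemma5} asserts exactly the opposite, $|\tau(g)|^2\lesssim\|g\|^2_{\mathcal L^2}\lesssim\|\nabla_q g\|^2_{\mathcal L^2}$, and the $\tau$ coupling cancels in the combined energy anyway, as you yourself note two sentences earlier. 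The real reason the weighted term is indispensable is the nonlinear transport term $\frac{1}{\psi_\infty}\nabla_q\cdot(\nabla u\,q\,g\,\psi_\infty)$ in the $g$-equation: at top order $\Lambda^s$, after integration by parts in $q$, this produces a contribution of the schematic size $\|\nabla\Lambda^s u\|_{L^2}\,\|\langle q\rangle g\|_{H^{s-1}(\mathcal L^2)}\,\|\nabla_q\Lambda^s g\|_{L^2(\mathcal L^2)}$. If one bounds $\|\langle q\rangle g\|_{H^{s-1}(\mathcal L^2)}$ by the weighted dissipation $\|\langle q\rangle\nabla_q g\|_{H^{s-1}(\mathcal L^2)}$ (Lemma \ref{Lemma4}), the right-hand side behaves like $D_\lambda^{3/2}$ and the estimate does not close; the fix is to propagate $\|\langle q\rangle g\|^2_{H^{s-1}(\mathcal L^2)}$ as part of $E_\lambda$, which is precisely why the paper introduces the small coefficient $\lambda$ and proves the separate weighted estimate of Lemma \ref{mix}. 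With that correction to the motivation, the rest of your sketch --- the use of \eqref{potential1}--\eqref{potential2} to control weighted commutators, the choice of $\lambda$, $\eta$, $\delta$, $\epsilon_0$ small, and the final bootstrap --- is the paper's argument.
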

\begin{theo}\label{th2}
Let $d\geq 3$. Let $(\rho,u,g)$ be a global strong solution of system \eqref{eq1} with the initial data $(\rho_0,u_0,g_0)$ under the conditions in Theorem \ref{th1}~. In addition, if 
$(\rho_0,u_0)\in \dot{B}^{-\frac{d}{2}}_{2,\infty}\times \dot{B}^{-\frac{d}{2}}_{2,\infty}$ and $g_0\in \dot{B}^{-\frac{d}{2}}_{2,\infty}(\mathcal{L}^2)$, 
then there exists a constant $C$ such that
	\begin{align*}
		~~~~~~~~
	\left\{
	\begin{array}{ll}
		\|\Lambda^\sigma(\rho,u)\|_{L^2} \leq C(1+t)^{-\frac{d}{4}-\frac{\sigma}{2}} ~,~~~~~~\sigma\in[0,s]~, \\
		\|\Lambda^\nu g\|_{L^2(\mathcal{L}^2)} ~\leq C(1+t)^{-\frac{d}{4}-\frac{\nu}{2}-\frac{1}{2}} ~,~~~\nu\in[0,s-1]~,\\
		\|\Lambda^\gamma g\|_{L^2(\mathcal{L}^2)} ~\leq C(1+t)^{-\frac{d}{4}-\frac{s}{2}}~,~~~~~~\gamma\in[s-1,s]~.
	\end{array}
	\right.
\end{align*}
\end{theo}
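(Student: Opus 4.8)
The argument couples the uniform-in-time energy bound of Theorem~\ref{th1} with a Schonbek-type Fourier splitting carried out order by order in the spatial frequencies. The first stage is a single bootstrap that simultaneously propagates the critical negative Besov norm and produces the zeroth-order decay. Set
\begin{align*}
M(t)=\sup_{0\le\tau\le t}\Big(\|(\rho,u)(\tau)\|_{\dot B^{-d/2}_{2,\infty}}+\|g(\tau)\|_{\dot B^{-d/2}_{2,\infty}(\mathcal L^{2})}+(1+\tau)^{\frac d4}\|(\rho,u)(\tau)\|_{L^{2}}+(1+\tau)^{\frac d4+\frac12}\|g(\tau)\|_{L^{2}(\mathcal L^{2})}\Big).
\end{align*}
On the Duhamel formula for \eqref{eq1} around $(0,0,\psi_{\infty})$, the linear propagator is bounded on $\dot B^{-d/2}_{2,\infty}$ and $\dot B^{-d/2}_{2,\infty}(\mathcal L^{2})$ uniformly in $t$, while every nonlinear term is at least quadratic and of divergence form, so that $\|fh\|_{\dot B^{-d/2}_{2,\infty}}\lesssim\|fh\|_{L^{1}}\le\|f\|_{L^{2}}\|h\|_{L^{2}}$; since $\int_{0}^{t}(1+\tau)^{-d/2}d\tau<\infty$ precisely when $d\ge3$, this gives $\|(\rho,u)(t)\|_{\dot B^{-d/2}_{2,\infty}}+\|g(t)\|_{\dot B^{-d/2}_{2,\infty}(\mathcal L^{2})}\lesssim M(0)+M(t)^{2}$. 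Simultaneously, split frequencies along the shrinking ball $S(t)=\{\xi:|\xi|^{2}\le K/(1+t)\}$ in the basic inequality $\frac{d}{dt}\mathcal E+c\mathcal D\le(\mathrm{NL})$, with $\mathcal E\simeq E_{\lambda}$, $\mathcal D\simeq D_{\lambda}$: off $S(t)$ one has $\mathcal D\gtrsim\frac{K}{1+t}\mathcal E$, while the mass on $S(t)$ is $\lesssim(K/(1+t))^{d/2}M(t)^{2}$ by the Besov bound, and the nonlinear contribution is $O((1+t)^{-1-d/2})M(t)^{2}$. Choosing $K$ large and integrating $\frac{d}{dt}\mathcal E+\frac{cK}{1+t}\mathcal E\lesssim(1+t)^{-1-d/2}M(t)^{2}$ yields $\mathcal E(t)\lesssim(1+t)^{-d/2}$; together these close $M(t)\lesssim M(0)+M(t)^{2}$, hence $M$ is bounded. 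The half power gained by $g$ comes from the spectral gap of $\mathcal L$ on $\{\int g\psi_{\infty}dq=0\}$ (a constraint preserved by \eqref{eq1}), which makes $\|g\|^{2}_{L^{2}(\mathcal L^{2})}$ a genuine part of $\mathcal D$ forced by $\|\nabla u\|^{2}_{L^{2}}$.

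\noindent\textbf{Higher derivatives.}
For $1\le k\le s-1$ one repeats the splitting on the order-$k$ functional
\begin{align*}
\mathcal F_{k}(t)\simeq\|\Lambda^{k}(\rho,u)\|^{2}_{L^{2}}+\|\Lambda^{k}g\|^{2}_{L^{2}(\mathcal L^{2})}+\eta\!\int\!\Lambda^{k-1}u\cdot\nabla\Lambda^{k-1}\rho\,dx ,
\end{align*}
whose dissipation controls $\|\Lambda^{k}\rho\|^{2}_{L^{2}}+\|\Lambda^{k+1}u\|^{2}_{L^{2}}+\|\Lambda^{k}g\|^{2}_{L^{2}(\mathcal L^{2})}$ (density by the Matsumura--Nishida cross term, $g$ at full order by the spectral gap). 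Off $S(t)$ this dissipation is $\gtrsim\frac{K}{1+t}\mathcal F_{k}$, while on $S(t)$ one uses $\int_{S(t)}|\xi|^{2k}|\widehat{(\rho,u)}|^{2}d\xi\le(K/(1+t))^{k}\|(\rho,u)\|^{2}_{L^{2}}\lesssim(1+t)^{-k-d/2}$; induction in $k$ then gives $\|\Lambda^{k}(\rho,u)\|_{L^{2}}\lesssim(1+t)^{-\frac d4-\frac k2}$, and inserting $\|\Lambda^{k+1}u\|_{L^{2}}$ as a source into the $g$-equation gives $\|\Lambda^{k}g\|_{L^{2}(\mathcal L^{2})}\lesssim(1+t)^{-\frac d4-\frac k2-\frac12}$. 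At $k=s$ the same $\mathcal F_{s}$ works: now only the $u$-term loses one derivative in the dissipation, which is harmless since $\|\Lambda^{s}\rho\|^{2}+\|\Lambda^{s}g\|^{2}_{\mathcal L^{2}}+\frac{K}{1+t}\|\Lambda^{s}u\|^{2}\gtrsim\frac{K}{1+t}\mathcal F_{s}$, and one gets $\|\Lambda^{s}(\rho,u)\|_{L^{2}}+\|\Lambda^{s}g\|_{L^{2}(\mathcal L^{2})}\lesssim(1+t)^{-\frac d4-\frac s2}$. Interpolating between $k=s-1$ and $k=s$ covers $\sigma\in[s-1,s]$ for $(\rho,u)$ and produces the stated saturation $\|\Lambda^{\gamma}g\|_{L^{2}(\mathcal L^{2})}\lesssim(1+t)^{-\frac d4-\frac s2}$ on $[s-1,s]$ (no further gain because $\|\Lambda^{s+1}u\|_{L^{2}}$ is unavailable), and interpolation with the integer-order bounds fills in the remaining $\sigma,\nu$.

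\noindent\textbf{Main obstacle.}
The hard part is that $s$ is only slightly above $1+\frac d2$, so no factor may carry more than $s$ derivatives: bounding the top-order quadratic terms --- $\Lambda^{s}(u\cdot\nabla\rho)$, $\Lambda^{s}(u\cdot\nabla u)$, $\Lambda^{s}\!\big(\tfrac1{1+\rho}\mathrm{div}\,\tau(g)\big)$, $\Lambda^{s}\big(\tfrac1{\psi_{\infty}}\nabla_{q}\!\cdot(\nabla u\,qg\psi_{\infty})\big)$, etc. --- by the available dissipation together with the propagated $\dot B^{-d/2}_{2,\infty}$ norm, rather than by an unavailable $H^{s+1}$ bound, is exactly the ``critical Fourier estimate'' of the theorem and the principal difficulty. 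The $q$-weighted piece $\lambda\|\langle q\rangle g\|^{2}_{H^{s-1}(\mathcal L^{2})}$ of $E_{\lambda}$ and the structural hypotheses \eqref{potential1}--\eqref{potential2} on $\mathcal U$ are what keep the coupling terms compatible, through the cancellation $\langle\mathrm{div}\,\tau(g),u\rangle+\langle\nabla u\,q\,\nabla_{q}\mathcal U,g\rangle_{\mathcal L^{2}}=0$ that underlies the whole energy scheme.
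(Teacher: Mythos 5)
Your overall architecture (negative-Besov propagation, Schonbek-type splitting, spectral gap of $\mathcal L$ for the extra half power in $g$, interpolation to fill in fractional orders) matches the paper's, and your collapsed bootstrap for the zeroth-order stage is a plausible compression of what the paper does in several passes (first a weak rate $(1+t)^{-d/4+1/2}$, iterated up to $(1+t)^{-d/4}$, then the $\dot B^{-d/2}_{2,\infty}$ bound, then $(1+t)^{-d/2}$). The place where the argument genuinely breaks down is the top order $k=s$, and this is precisely the step the paper singles out (Remark 1.5) and spends Lemma \ref{decayinH.s} fixing.

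Your claim that ``$\|\Lambda^{s}\rho\|^{2}+\|\Lambda^{s}g\|^{2}_{\mathcal L^{2}}+\frac{K}{1+t}\|\Lambda^{s}u\|^{2}\gtrsim\frac{K}{1+t}\mathcal F_{s}$'' does not address the real obstruction, which is that
$\mathcal F_{s}=\|\Lambda^{s}(\rho,u)\|^{2}_{L^2}+\|\Lambda^{s}g\|^{2}_{L^{2}(\mathcal L^{2})}+\eta\langle\Lambda^{s-1}u,\nabla\Lambda^{s-1}\rho\rangle$
is \emph{not} comparable to $\|\Lambda^{s}(\rho,u)\|^{2}_{L^2}+\|\Lambda^{s}g\|^{2}_{L^{2}(\mathcal L^{2})}$ for any fixed $\eta>0$: the cross term is of size $\|\Lambda^{s-1}u\|_{L^2}\|\Lambda^{s}\rho\|_{L^2}$, and $\|\Lambda^{s-1}u\|_{L^2}$ is not bounded by $\|\Lambda^{s}(\rho,u)\|_{L^2}$ on the whole line of frequencies (there is no homogeneous Poincar\'e inequality). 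For $1\le k\le s-1$ this is harmless because you can keep the full $H^{k}$ functional and interpolate, but at $k=s$ there is no higher norm available to absorb it. Consequently, even if you establish decay of $\mathcal F_{s}$, you cannot read off decay of $\|\Lambda^{s}(\rho,u)\|_{L^2}$, which is what the theorem asserts. Your claimed dissipation inequality would also force you to control a term of the form $\|\Lambda^{s-1}\mathrm{div}\,u\|^{2}_{L^2}$ (coming from differentiating the cross term in time and using the $\rho$-equation), and its low-frequency part is not dominated by $\frac{K}{1+t}\|\Lambda^{s+1}u\|^{2}_{L^2}$.

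The paper's resolution --- and the ``critical Fourier estimate'' it advertises --- is to replace $\eta\langle\Lambda^{s-1}u,\nabla\Lambda^{s-1}\rho\rangle$ by the \emph{time-modulated, high-frequency-localized} version
$\displaystyle\frac{C_d\eta}{(1+t)\ln^{2}(e+t)}\sum_{j\in\sigma_R}\langle\Lambda^{s}\dot\Delta_{j}\rho,\Lambda^{s-1}\dot\Delta_{j}u\rangle$ with $R=t$.
The small, decaying coefficient makes the cross term a genuinely subordinate perturbation of $\|\Lambda^{s}(\sqrt{\gamma}\rho,u)\|^{2}_{L^2}+\|\Lambda^{s}g\|^{2}_{L^{2}(\mathcal L^{2})}$ (so the functional \emph{is} equivalent to the homogeneous $\dot H^{s}$ energy), while the high-frequency restriction lets $\frac{C_d}{1+t}\|\Lambda^{s}\dot\Delta_{j}u\|^{2}_{L^2}\lesssim\|\Lambda^{s+1}\dot\Delta_{j}u\|^{2}_{L^2}$ absorb the bad $u$-term, and still yields the extra dissipation $\frac{C_d\eta\gamma}{(1+t)\ln^{2}(e+t)}\int_{S^{c}(t)}|\xi|^{2s}|\widehat{\rho}|^{2}\,d\xi$ in high frequency. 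The terms created by differentiating the coefficient in $t$ are then closed by a Gr\"onwall argument with weight $(1+t)^{-1}\ln^{-2}(e+t)$ (hence the $\ln^{2}$). Without this device, the $\dot H^{s}$ decay claim at top order does not follow from your energy scheme; everything else in your sketch (including the $\nu$- and $\gamma$-ranges for $g$ via the spectral gap ODE $\frac{d}{dt}\|\Lambda^{m}g\|^{2}_{L^{2}(\mathcal L^2)}+\|\Lambda^{m}g\|^{2}_{L^{2}(\mathcal L^2)}\lesssim\|\Lambda^{m+1}u\|^{2}_{L^2}(\cdots)$ and interpolation) is consistent with the paper.
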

We would like to introduce the difficulties and our main ideas to overcome them.
\begin{rema}
 
Different from the classical potential $\mathcal{U}=\frac{1}{2}|q|^2$, the lack of  $\|(\rho,u)\|_{L^1}$ leads the loss of $\frac{d}{8}$ time decay rate (One can see the estimate of term $B_2$ in Section 3 below for more details) , which forces us to obtain the weaker  decay rate in $L^2$ at the very beginning. To overcome this difficulty, we prove the global solutions of system $(\ref{eq1})$ considered in Theorem \ref{th1} belong to some Besov space with negative index as that of \cite{luozhaonanFENE}. In reward, it helps to improve the time decay rate in $L^2$ obtained by using the bootstrap argument. 
\end{rema}
\begin{rema}
It is worthy mentioning that the optimal decay rates of $(\rho,u,g)$ in $\dot H^s$ is absolutely innovative and it fails to obtain the decay rate of $(\rho,u)$ in $\dot{H}^s$ by the same way as in $L^2$ since
$$\|\Lambda^s(\rho,u)\|^2_{L^2} + \eta\langle\Lambda^{s-1}u,\Lambda^s\rho\rangle \simeq \|\Lambda^s(\rho,u)\|^2_{L^2}$$
does not hold anymore for any $\eta>0$. To overcome such obstacle, motivated by \cite{2020Wuguochun}, a critical Fourier splitting estimate is established in Lemma \ref{decayinH.s} (see Section 5 below) , which implies that the dissipation of $\rho$ only in high frequency fully enables us to derive optimal decay rate in $\dot{H}^s$. It should be pointed out that different from the frequency decomposition carrying out in \cite{2020Wuguochun}, the frequency decomposition considered in this paper is in time dependency, which conduces to obtain optimal decay rate without finite induction argument. 
\end{rema}
\begin{rema}
	To our best knowledge, it is the first result for the time decay rate in $\dot{H}^s$ of global solutions to the compressible Oldroyd-B model. Moreover, referring to \cite{He2009}, we obtain the faster time decay rate for $\tau$ and $g$. Indeed, we gain more $\frac{1}{2}$ decay rate for $(\rho,u)$ in $\dot H^{\sigma}$, $\sigma\in(s-1,s]$. In reward, we gain $\frac{1}{2}$ faster decay rate for $\tau$ $\left(\text{or}~ g\right)$ in $\dot H^{\sigma}$ $\left(\text{in}~ \dot H^{\sigma}(\mathcal{L}^2)\right)$ with $\sigma\in(s-2,s]$ than we can obtain originally.
\end{rema}
The paper is organized as follows. In Section 2, we introduce some notations and  give some preliminaries which will be used in the sequel. In Section 3, we prove that the micro-macro model for compressible polymeric fluids admits a unique global strong solution provided the initial data are close to equilibrium state for $d\geq2$. In Section 4 and Section 5, we study the long time decay rate in $L^2$ and $\dot H^s$ respectively for the solutions established in Section 3 by using Fourier splitting method and end up with the proof of Theorem \ref{th2} by interpolation.
%\vspace*{2em}
%\noindent\textbf{Acknowledgements}. This work was partially supported by ...

\section{Preliminaries}
  In this section we will introduce some notations and useful lemmas which will be used in the sequel.

 If the function spaces are over $\mathbb{R}^d$ for the variable $x$ and $q$, for simplicity, we drop $\mathbb{R}^d$ in the notation of function spaces if there is no ambiguity.

For $p\geq1$, we denote by $\mathcal{L}^{p}$ the space
$$\mathcal{L}^{p}=\big\{f \big|\|f\|^{p}_{\mathcal{L}^{p}}=\int_{\mathbb{R}^d} \psi_{\infty}|f|^{p}dq<\infty\big\}.~~~~~~~~~~~~~~~~$$

We will use the notation $L^{p}_{x}(\mathcal{L}^{q})$ to denote $L^{p}[\mathbb{R}^{d};\mathcal{L}^{q}]:$
$$L^{p}_{x}(\mathcal{L}^{q})=\big\{f \big|\|f\|_{L^{p}_{x}(\mathcal{L}^{q})}=(\int_{\mathbb{R}^{d}}(\int_{\mathbb{R}^d} \psi_{\infty}|f|^{q}dq)^{\frac{p}{q}}dx)^{\frac{1}{p}}<\infty\big\}.$$

The symbol $\widehat{f}=\mathcal{F}(f)$ denotes the Fourier transform of $f$.
Let $\Lambda^s f=\mathcal{F}^{-1}(|\xi|^s \widehat{f})$.
If $s\geq0$, we can denote by $H^{s}(\mathcal{L}^{2})$ the space
$$~~~~~~H^{s}(\mathcal{L}^{2})=\{f\big| \|f\|^2_{H^{s}(\mathcal{L}^{2})}=\int_{\mathbb{R}^{d}}\int_{\mathbb{R}^{d}}(|f|^2+|\Lambda^s f|^2)\psi_\infty dRdx<\infty\}.$$
Then we introduce the energy and energy dissipation functionals for the fluctuation $(\rho,u,g)$ as follows:
\begin{align*}
	~~~~~~~~~~~~~~~E_{\lambda}(t)&=\sum_{m=0,s}\left(\|h(\rho)^{\frac 1 2}\Lambda^m\rho \|^2_{L^2}+\|(1+\rho)^{\frac 1 2}\Lambda^m u\|^2_{L^2}+\|\Lambda^mg\|^2_{L^2(\mathcal{L}^{2})} \right) \\ \notag
	& ~~~~ + \sum_{m=0,s-1}\lambda\|\langle q \rangle \Lambda^m g\|^2_{L^2(\mathcal{L}^{2})}~,
\end{align*}
and
\begin{align*}
	~~~~~~~~~~~~~~~~D_{\lambda}(t)&=\gamma\|\nabla\rho\|^2_{H^{s-1}}+\mu\|\nabla u\|^2_{H^{s}}+(\mu+\mu')\|{\rm div}u\|^2_{H^{s}}+\|\nabla_q g\|^2_{H^{s}(\mathcal{L}^{2})} \\ \notag
	&~~~+ \lambda\|\langle q \rangle \nabla_q g\|^2_{H^{s-1}(\mathcal{L}^{2})}~,~~~~~~~~~~~~~~~~~~~~~~~~~~~~~
\end{align*}
where positive constant $\lambda$ is small enough. Sometimes we write $f\lm g$ instead of $f\leq Cg$, where $C$ is a constant. We agree that $\nabla$ stands for $\nabla_x$ and $div$ stands for $div_x$.\\

We recall the Littlewood-Paley decomposition theory and and Besov spaces.
\begin{prop}\cite{Bahouri2011}\label{prop0}
Let $\mathcal{C}$ be the annulus $\{\xi\in\mathbb{R}^d:\frac 3 4\leq|\xi|\leq\frac 8 3\}$. There exist radial functions $\chi$ and $\varphi$, valued in the interval $[0,1]$, belonging respectively to $\mathcal{D}(B(0,\frac 4 3))$ and $\mathcal{D}(\mathcal{C})$, and such that
$$ \forall\xi\in\mathbb{R}^d,\ \chi(\xi)+\sum_{j\geq 0}\varphi(2^{-j}\xi)=1, $$
$$ \forall\xi\in\mathbb{R}^d\backslash\{0\},\ \sum_{j\in\mathbb{Z}}\varphi(2^{-j}\xi)=1, ~~~$$
$$ |j-j'|\geq 2\Rightarrow\mathrm{Supp}\ \varphi(2^{-j}\cdot)\cap \mathrm{Supp}\ \varphi(2^{-j'}\cdot)=\emptyset, $$
$$ ~~j\geq 1\Rightarrow\mathrm{Supp}\ \chi(\cdot)\cap \mathrm{Supp}\ \varphi(2^{-j}\cdot)=\emptyset. $$
The set $\widetilde{\mathcal{C}}=B(0,\frac 2 3)+\mathcal{C}$ is an annulus, then
$$ |j-j'|\geq 5\Rightarrow 2^{j}\mathcal{C}\cap 2^{j'}\widetilde{\mathcal{C}}=\emptyset. ~~$$
Further, we have
$$ \forall\xi\in\mathbb{R}^d,\ \frac 1 2\leq\chi^2(\xi)+\sum_{j\geq 0}\varphi^2(2^{-j}\xi)\leq 1, $$
$$ \forall\xi\in\mathbb{R}^d\backslash\{0\},\ \frac 1 2\leq\sum_{j\in\mathbb{Z}}\varphi^2(2^{-j}\xi)\leq 1. ~~~~~$$
\end{prop}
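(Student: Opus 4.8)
The plan is to reproduce the classical construction from \cite{Bahouri2011}. First I would fix a single smooth radial function $\chi$, valued in $[0,1]$, supported in $B(0,\tfrac43)$, identically equal to $1$ on $B(0,\tfrac34)$, and chosen radially nonincreasing (that is, $r\mapsto\chi(re)$ is nonincreasing for every unit vector $e$); such a $\chi$ is obtained by mollifying the indicator of an intermediate ball and composing with $|\cdot|$. Then I would set $\varphi(\xi):=\chi(\xi/2)-\chi(\xi)$. That $\chi\in\mathcal{D}(B(0,\tfrac43))$ is automatic. For $\varphi$: the function $\chi(\xi/2)$ is supported in $B(0,\tfrac83)$, while on $B(0,\tfrac34)$ one has $|\xi/2|\le\tfrac38<\tfrac34$, so $\chi(\xi/2)=\chi(\xi)=1$ and $\varphi$ vanishes there; hence $\mathrm{Supp}\,\varphi\subset\{\tfrac34\le|\xi|\le\tfrac83\}$, and $\varphi\ge0$ follows from the monotonicity of $\chi$. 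Thus $\varphi\in\mathcal{D}(\mathcal{C})$.

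The two partition-of-unity identities then come out of telescoping. For every integer $J\ge0$ one has $\chi(\xi)+\sum_{0\le j\le J}\varphi(2^{-j}\xi)=\chi(2^{-J-1}\xi)$, which tends to $\chi(0)=1$ as $J\to\infty$, giving $\chi(\xi)+\sum_{j\ge0}\varphi(2^{-j}\xi)=1$; and for $\xi\ne0$, $\sum_{-N\le j\le N}\varphi(2^{-j}\xi)=\chi(2^{-N-1}\xi)-\chi(2^{N}\xi)\to1-0$ as $N\to\infty$, because $\chi$ is continuous with $\chi(0)=1$ and compactly supported. The sums are locally finite thanks to the support properties below, so no convergence issue arises.

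Next I would record the elementary dyadic separations. Since $\mathrm{Supp}\,\varphi(2^{-j}\cdot)\subset\{\tfrac34 2^{j}\le|\xi|\le\tfrac83 2^{j}\}$, two such rings are disjoint as soon as $2^{|j-j'|}>\tfrac{8/3}{3/4}=\tfrac{32}{9}$, hence whenever $|j-j'|\ge2$; and since $\mathrm{Supp}\,\chi\subset B(0,\tfrac43)$, it is disjoint from $\mathrm{Supp}\,\varphi(2^{-j}\cdot)$ as soon as $\tfrac34 2^{j}>\tfrac43$, hence for $j\ge1$. Also $\widetilde{\mathcal{C}}=B(0,\tfrac23)+\mathcal{C}$ is the annulus $\{\tfrac1{12}\le|\xi|\le\tfrac{10}{3}\}$ (the inner radius is positive because $\tfrac23<\tfrac34$), and the disjointness $2^{j}\mathcal{C}\cap 2^{j'}\widetilde{\mathcal{C}}=\emptyset$ for $|j-j'|\ge5$ is verified by the same comparison of dyadic radii.

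Finally, the quadratic bounds follow from the observation that at any fixed $\xi$ the support separations force at most two of the functions $\chi,\varphi(\cdot),\varphi(2^{-1}\cdot),\dots$ to be nonzero; denoting their values by $a,b\in[0,1]$ with $a+b=1$ (by the first identity), one gets $\chi^2(\xi)+\sum_{j\ge0}\varphi^2(2^{-j}\xi)=a^2+b^2=1-2ab$, and $0\le ab\le\tfrac14$ yields $\tfrac12\le a^2+b^2\le1$. The identical two-term bookkeeping applied to $\{\varphi(2^{-j}\cdot)\}_{j\in\mathbb{Z}}$ on $\mathbb{R}^d\setminus\{0\}$ gives the remaining pair of inequalities. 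The only step needing a little care is the first one — arranging $\chi$ to be radially nonincreasing so that $\varphi=\chi(\cdot/2)-\chi\ge0$ — but this is routine once $\chi$ is built by smoothing a radial indicator.
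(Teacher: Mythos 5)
Your construction is correct and is exactly the standard one from the cited reference \cite{Bahouri2011} (taking $\varphi=\chi(\cdot/2)-\chi$ with $\chi$ radially nonincreasing, telescoping, dyadic radius comparison, and the two-nonzero-terms observation for the quadratic bounds); the paper itself states this proposition without proof, simply quoting it from that source. The only cosmetic imprecision is that $\widetilde{\mathcal{C}}=B(0,\frac23)+\mathcal{C}$ is the \emph{open} annulus $\{\frac1{12}<|\xi|<\frac{10}{3}\}$, and this strictness is actually what makes the borderline case $|j-j'|=5$ work in the disjointness claim.
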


$\mathcal{F}$ represents the Fourier transform and  its inverse is denoted by $\mathcal{F}^{-1}$.
Let $u$ be a tempered distribution in $\mathcal{S}'(\mathbb{R}^d)$. For all $j\in\mathbb{Z}$ , define
$$~~~~~~
\dot\Delta_j u=\mathcal{F}^{-1}\left(\varphi(2^{-j}\cdot)\mathcal{F}u\right)~~~~~~and~~~~~~
\dot S_j u=\sum_{j'<j}\dot\Delta_{j'}u~.~~~~~~~~~~~~~~~
$$
Then the Littlewood-Paley decomposition is given as follows:
$$ u=\sum_{j\in\mathbb{Z}}\dot\Delta_j u \quad \text{in} \ \ \ \mathcal{S}^{'}_h(\mathbb{R}^d). ~~~~~$$
Let $s\in\mathbb{R},\ 1\leq p,r\leq\infty.$ The homogeneous Besov space $\dot B^s_{p,r}$ and $\dot B^s_{p,r}(\mathcal{L}^{p'})$ is defined by
$$ \dot B^s_{p,r}=\{u\in S^{'}_h:\|u\|_{\dot B^s_{p,r}}=\Big\|\left(2^{js}\|\dot\Delta_j u\|_{L^p}\right)_j \Big\|_{l^r(\mathbb{Z})}<\infty\}, ~~~~$$
$$ \dot B^s_{p,r}(\mathcal{L}^{p'})=\{\phi\in S^{'}_h:\|\phi\|_{\dot B^s_{p,r}(\mathcal{L}^{p'})}=\Big\|\left(2^{js}\|\dot\Delta_j \phi\|_{L_{x}^{p}(\mathcal{L}^{p'})}\right)_j \Big\|_{l^r(\mathbb{Z})}<\infty\}.$$

The following lemma is about the embedding between Lesbesgue and Besov spaces.
\begin{lemm}\cite{Bahouri2011}\label{lemma1}
	Let $1\leq p\leq 2$ and $d\geq3$. Then it holds that 
	$$L^{p}\hookrightarrow \dot{B}^{\frac{d}{2}-\frac d p}_{2,\infty}.$$
	Moreover, it follows that
	$$L^2=\dot{B}^0_{2,2} .~~~$$
\end{lemm}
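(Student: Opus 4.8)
The plan is to deduce both claims from standard Littlewood--Paley machinery: Bernstein's inequality combined with the uniform $L^p$-boundedness of the dyadic blocks $\dot\Delta_j$ for the embedding, and Plancherel's identity combined with the almost-orthogonality estimate of Proposition \ref{prop0} for the identity $L^2=\dot B^0_{2,2}$. The hypothesis $d\geq 3$ plays no essential role in the embedding itself; it is kept only for consistency with the way the lemma is invoked later.

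First I would fix $u\in L^p$ with $1\le p\le 2$ and $j\in\mathbb{Z}$. Since $\widehat{\dot\Delta_j u}$ is supported in the annulus $2^j\mathcal{C}$, Bernstein's inequality gives $\|\dot\Delta_j u\|_{L^2}\lm 2^{jd(\frac1p-\frac12)}\|\dot\Delta_j u\|_{L^p}$; moreover $\dot\Delta_j$ is a convolution operator whose kernel $2^{jd}h(2^j\cdot)$, with $h=\mathcal{F}^{-1}\varphi\in\mathcal{S}$, has $L^1$ norm equal to $\|h\|_{L^1}$ independently of $j$, so that $\|\dot\Delta_j u\|_{L^p}\lm\|u\|_{L^p}$ by Young's inequality. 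Multiplying through by $2^{j(\frac d2-\frac dp)}$, the dyadic exponents cancel exactly, yielding $2^{j(\frac d2-\frac dp)}\|\dot\Delta_j u\|_{L^2}\lm\|u\|_{L^p}$ with a constant independent of $j$; taking the supremum over $j\in\mathbb{Z}$ delivers $\|u\|_{\dot B^{\frac d2-\frac dp}_{2,\infty}}\lm\|u\|_{L^p}$. I would also check that $u\in\mathcal{S}'_h$, which follows from $\|\dot S_j u\|_{L^\infty}\lm 2^{jd/p}\|u\|_{L^p}\to 0$ as $j\to-\infty$ (here is the one place where $p<\infty$ matters).

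For the identity, I would simply compute, via Plancherel,
\[
\|u\|^2_{\dot B^0_{2,2}}=\sum_{j\in\mathbb{Z}}\|\dot\Delta_j u\|^2_{L^2}=\sum_{j\in\mathbb{Z}}\int_{\mathbb{R}^d}\varphi^2(2^{-j}\xi)|\widehat u(\xi)|^2\,d\xi=\int_{\mathbb{R}^d}\Big(\sum_{j\in\mathbb{Z}}\varphi^2(2^{-j}\xi)\Big)|\widehat u(\xi)|^2\,d\xi,
\]
and then insert the two-sided bound $\tfrac12\le\sum_{j\in\mathbb{Z}}\varphi^2(2^{-j}\xi)\le 1$ recorded in Proposition \ref{prop0} to obtain $\tfrac12\|u\|^2_{L^2}\le\|u\|^2_{\dot B^0_{2,2}}\le\|u\|^2_{L^2}$, i.e. $L^2=\dot B^0_{2,2}$ with equivalent norms.

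I do not anticipate any genuine obstacle here, since this is a classical fact from \cite{Bahouri2011}. The single point worth emphasizing is that the target Besov space carries the summation index $r=\infty$: it is precisely this choice that makes the exact cancellation of dyadic exponents in the Bernstein step sufficient to close the estimate, whereas any finite $r$ would require a summability of $\big(2^{j(\frac d2-\frac dp)}\|\dot\Delta_j u\|_{L^2}\big)_j$ that generic $L^p$ data do not provide. I would therefore present the argument essentially as sketched and spend no further effort on it, the substantive content of the paper being Theorems \ref{th1}--\ref{th2}.
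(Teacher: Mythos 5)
Your argument is correct, and it is the standard one: the paper itself offers no proof, merely citing \cite{Bahouri2011}, and the textbook proof of this embedding is exactly what you wrote — Bernstein's inequality on each dyadic block plus the uniform $L^p$-boundedness of $\dot\Delta_j$, with the exact cancellation of exponents being the whole point of the choice $r=\infty$, together with the low-frequency check that $L^p\subset\mathcal{S}'_h$. The Plancherel computation for $L^2=\dot B^0_{2,2}$ via the two-sided bound on $\sum_j\varphi^2(2^{-j}\xi)$ from Proposition~\ref{prop0} is likewise the standard proof. No gaps; nothing to add.
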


The following lemma is the Gagliardo-Nirenberg inequality.
\begin{lemm}\cite{Nirenberg}\label{Lemma3}
Let $d\geq2,~p\in[2,+\infty)$ and $0\leq s,s_1\leq s_2$, then there exists a constant $C$ such that
 $$\|\Lambda^{s}f\|_{L^{p}}\leq C \|\Lambda^{s_1}f\|^{1-\theta}_{L^{2}}\|\Lambda^{s_2} f\|^{\theta}_{L^{2}},$$
where $0\leq\theta\leq1$ and $\theta$ satisfy
$$ s+d(\frac 1 2 -\frac 1 p)=s_1 (1-\theta)+\theta s_2.$$
Note that we require that $0<\theta<1$, $0\leq s_1\leq s$, when $p=\infty$.
\end{lemm}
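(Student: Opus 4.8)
The plan is to obtain the inequality by combining an elementary interpolation identity on the Fourier side with a Hardy--Littlewood--Sobolev embedding; no information about system \eqref{eq1} is used, and the argument is purely harmonic-analytic. Throughout I set
$$\sigma:=(1-\theta)s_1+\theta s_2,$$
so that the scaling relation assumed on $\theta$ reads $\sigma=s+d(\frac{1}{2}-\frac{1}{p})$. Since $0\le s_1\le s_2$ and $0\le\theta\le1$ one has $s_1\le\sigma\le s_2$, because $\sigma-s_1=\theta(s_2-s_1)\ge0$ and $\sigma-s_2=-(1-\theta)(s_2-s_1)\le0$; moreover $\sigma-s=d(\frac{1}{2}-\frac{1}{p})\ge0$ since $p\ge2$.

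\textbf{Step 1 (interpolation in $\dot H^\sigma$).} By Plancherel's theorem and the pointwise factorization $|\xi|^{2\sigma}=\big(|\xi|^{2s_1}\big)^{1-\theta}\big(|\xi|^{2s_2}\big)^{\theta}$, H\"older's inequality in $\xi$ with conjugate exponents $\frac{1}{1-\theta}$ and $\frac{1}{\theta}$ (when $\theta\in(0,1)$) gives
$$\|\Lambda^\sigma f\|_{L^2}^2=\int_{\mathbb{R}^d}\big(|\xi|^{2s_1}|\widehat f(\xi)|^2\big)^{1-\theta}\big(|\xi|^{2s_2}|\widehat f(\xi)|^2\big)^{\theta}\,d\xi\le\|\Lambda^{s_1}f\|_{L^2}^{2(1-\theta)}\,\|\Lambda^{s_2}f\|_{L^2}^{2\theta},$$
i.e. $\|\Lambda^\sigma f\|_{L^2}\le\|\Lambda^{s_1}f\|_{L^2}^{1-\theta}\|\Lambda^{s_2}f\|_{L^2}^{\theta}$ (the cases $\theta\in\{0,1\}$ being trivial).

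\textbf{Step 2 (Hardy--Littlewood--Sobolev).} Since $\alpha:=\sigma-s=d(\frac{1}{2}-\frac{1}{p})\in[0,d)$ for $p\in[2,\infty)$, one has $\Lambda^s f=\Lambda^{-\alpha}(\Lambda^\sigma f)$, i.e. $\Lambda^s f$ is, up to a constant, the Riesz potential $I_\alpha$ of $\Lambda^\sigma f$. The Hardy--Littlewood--Sobolev inequality, whose admissibility condition $\frac{1}{p}=\frac{1}{2}-\frac{\alpha}{d}$ is exactly the identity just displayed, gives $\|\Lambda^s f\|_{L^p}\lesssim\|\Lambda^\sigma f\|_{L^2}$ for every $2\le p<\infty$ (the case $p=2$, $\alpha=0$, being an equality up to constants). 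Chaining this with Step 1 produces $\|\Lambda^s f\|_{L^p}\lesssim\|\Lambda^{s_1}f\|_{L^2}^{1-\theta}\|\Lambda^{s_2}f\|_{L^2}^{\theta}$, which is the claim for $p\in[2,\infty)$.

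The only genuinely delicate point is the endpoint $p=\infty$, which is excluded from the range above precisely because Step 2 degenerates there: $\alpha=\sigma-s=d/2$ is the forbidden endpoint of Hardy--Littlewood--Sobolev, consistent with the failure of the embedding $\dot H^{d/2}\hookrightarrow L^\infty$. For $p=\infty$ I would instead argue by frequency splitting: using the Littlewood--Paley decomposition of Proposition \ref{prop0} and Bernstein's inequality $\|\dot\Delta_j\Lambda^s f\|_{L^\infty}\lesssim 2^{j(s+d/2)}\|\dot\Delta_j f\|_{L^2}$, bound $\|\Lambda^s f\|_{L^\infty}\le\sum_j\|\dot\Delta_j\Lambda^s f\|_{L^\infty}$, split the sum at a real level $N$, estimate the low part ($j\le N$) by $\lesssim 2^{N\theta(s_2-s_1)}\|\Lambda^{s_1}f\|_{L^2}$ and the high part ($j>N$) by $\lesssim 2^{-N(1-\theta)(s_2-s_1)}\|\Lambda^{s_2}f\|_{L^2}$, and optimize over $N$; the two geometric series converge only when $0<\theta<1$, and the low-frequency bookkeeping forces $s_1\le s$, which is exactly why the statement records these extra hypotheses when $p=\infty$. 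The remaining degenerate configurations ($s_1=s_2$, which fixes the exponent and collapses the claim to the Sobolev embedding $\dot H^{s_1}\hookrightarrow\dot W^{s,p}$, and $\theta\in\{0,1\}$ for $p<\infty$, where Step 1 is trivial) are immediate. Thus the main obstacle is organizational rather than substantial: isolating and separately handling the $p=\infty$ endpoint, while the core case $p\in[2,\infty)$ follows at once from the two displayed steps.
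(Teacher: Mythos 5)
Your argument is correct, and it is the standard harmonic-analytic proof of this $L^2$-based Gagliardo--Nirenberg inequality: the paper itself gives no proof, simply citing the classical reference, so there is nothing to compare against beyond noting that your route (Plancherel--H\"older interpolation in $\dot H^\sigma$, followed by the Riesz-potential/Hardy--Littlewood--Sobolev embedding $\dot H^{\sigma}\hookrightarrow L^p$ with $\sigma-s=d(\tfrac12-\tfrac1p)$, and a Littlewood--Paley/Bernstein splitting for $p=\infty$) is exactly how one would reconstruct it. One small remark: in the endpoint case the convergence of the low-frequency sum only requires $s+\tfrac d2>s_1$, i.e. $\theta>0$, rather than $s_1\le s$ itself; the hypothesis $0\le s_1\le s$ recorded in the statement is thus sufficient but not what your computation literally forces. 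This does not affect the validity of the proof.
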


The following lemma allows us to estimate the extra stress tensor $\tau$.
\begin{lemm}\cite{2017Global}\label{Lemma4}
 Assume $g\in H^s(\mathcal{L}^{2})$ with $\int_{\mathbb{R}^{d}} g\psi_\infty dq=0$, then it follows that
\begin{align*}
\left\{
\begin{array}{ll}
\|\nabla_q\mathcal{U}g\|_{\dot{H}^\sigma(\mathcal{L}^{2})},\|qg\|_{\dot{H}^\sigma(\mathcal{L}^{2})}\lesssim\|\nabla_qg\|_{\dot{H}^\sigma(\mathcal{L}^{2})},\\
\|q\nabla_q\mathcal{U}g\|_{\dot{H}^\sigma(\mathcal{L}^{2})},\||q|^2g\|_{{\dot{H}^\sigma(\mathcal{L}^{2})}}\lesssim\|\langle q\rangle\nabla_qg\|_{\dot{H}^\sigma(\mathcal{L}^{2})},
\end{array}
\right.
\end{align*}
for any $\sigma\in[0,s]$.
\end{lemm}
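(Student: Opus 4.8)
The plan is to reduce the whole statement to the case $\sigma=0$ and then establish a family of weighted Poincar\'e-type inequalities in the variable $q$. Since $\|\cdot\|_{\dot H^\sigma(\mathcal{L}^2)}=\|\Lambda^\sigma_x\,\cdot\,\|_{L^2_x(\mathcal{L}^2)}$ and the Fourier multiplier $\Lambda^\sigma_x$ acts only on the $x$ variable, it commutes with multiplication by any function of $q$ and with $\nabla_q$, so that $\Lambda^\sigma(\nabla_q\mathcal{U}\,g)=\nabla_q\mathcal{U}\,\Lambda^\sigma g$, $\Lambda^\sigma(q\,g)=q\,\Lambda^\sigma g$, $\Lambda^\sigma(\nabla_q g)=\nabla_q(\Lambda^\sigma g)$, and likewise for the remaining terms. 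Moreover $\int_{\mathbb{R}^d}(\Lambda^\sigma g)\psi_\infty\,dq=\Lambda^\sigma_x\big(\int_{\mathbb{R}^d} g\,\psi_\infty\,dq\big)=0$, so $\Lambda^\sigma g$ still has zero $q$-average. Hence it will suffice to prove, for every fixed $x$, the pointwise-in-$x$ bounds
\[
\|\nabla_q\mathcal{U}\,g\|_{\mathcal{L}^2},\ \|qg\|_{\mathcal{L}^2}\lesssim\|\nabla_q g\|_{\mathcal{L}^2},\qquad \|q\nabla_q\mathcal{U}\,g\|_{\mathcal{L}^2},\ \||q|^2 g\|_{\mathcal{L}^2}\lesssim\|\langle q\rangle\nabla_q g\|_{\mathcal{L}^2}
\]
for functions $g=g(q)$ with $\int g\,\psi_\infty\,dq=0$, and then integrate in $x$. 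All the integrations by parts below will have vanishing boundary terms at infinity because of the decay of $\psi_\infty=e^{-\mathcal{U}}/Z$ and the moment bounds in \eqref{potential1}; I would first argue for smooth, rapidly decaying $g$ and close by density.

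For the first line the key identity is $\nabla_q\psi_\infty=-\nabla_q\mathcal{U}\,\psi_\infty$. Setting $I:=\int|\nabla_q\mathcal{U}|^2 g^2\psi_\infty\,dq=-\int g^2\,\nabla_q\mathcal{U}\cdot\nabla_q\psi_\infty\,dq$ and integrating by parts in $q$,
\[
I=\int\psi_\infty\,\nabla_q\!\cdot\!\big(g^2\nabla_q\mathcal{U}\big)\,dq=\int\psi_\infty\,g^2\,\Delta_q\mathcal{U}\,dq+2\int\psi_\infty\,g\,\nabla_q\mathcal{U}\cdot\nabla_q g\,dq.
\]
Applying $\Delta_q\mathcal{U}\le C+\delta|\nabla_q\mathcal{U}|^2$ with $\delta\in(0,1)$ to the first term and Young's inequality to the second, a fraction $(\delta+\eta)I$ with $\eta>0$ arbitrarily small can be absorbed into the left-hand side, which leaves $I\lesssim\|g\|_{\mathcal{L}^2}^2+\|\nabla_q g\|_{\mathcal{L}^2}^2$. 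This is where the hypothesis $\int g\psi_\infty\,dq=0$ is used: via the Poincar\'e (spectral-gap) inequality for the probability measure $\psi_\infty\,dq$, which is part of the standard functional framework attached to the assumptions on $\mathcal{U}$ (cf. \cite{2017Global}), one has $\|g\|_{\mathcal{L}^2}\lesssim\|\nabla_q g\|_{\mathcal{L}^2}$ and therefore $\|\nabla_q\mathcal{U}\,g\|_{\mathcal{L}^2}^2=I\lesssim\|\nabla_q g\|_{\mathcal{L}^2}^2$. The bound for $qg$ then follows at once from $|q|\lesssim1+|\nabla_q\mathcal{U}|$ in \eqref{potential1}, since $\|qg\|_{\mathcal{L}^2}^2\lesssim\|g\|_{\mathcal{L}^2}^2+\|\nabla_q\mathcal{U}\,g\|_{\mathcal{L}^2}^2\lesssim\|\nabla_q g\|_{\mathcal{L}^2}^2$.

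For the second line the central quantity is $J:=\int|q|^2|\nabla_q\mathcal{U}|^2 g^2\psi_\infty\,dq$, which is exactly $\|q\nabla_q\mathcal{U}\,g\|_{\mathcal{L}^2}^2$. The crucial point is to carry out the integration by parts keeping $|q|^2$, rather than $|\nabla_q\mathcal{U}|^2$, as the outer weight, so that only $\nabla_q(|q|^2)=2q$ and $\Delta_q\mathcal{U}$ appear and no second derivative of $\mathcal{U}$ is needed:
\[
J=-\int|q|^2 g^2\,\nabla_q\mathcal{U}\cdot\nabla_q\psi_\infty\,dq=\int\psi_\infty\Big(|q|^2 g^2\,\Delta_q\mathcal{U}+2g^2\,q\cdot\nabla_q\mathcal{U}+2|q|^2 g\,\nabla_q g\cdot\nabla_q\mathcal{U}\Big)\,dq.
\]
Using $\Delta_q\mathcal{U}\le C+\delta|\nabla_q\mathcal{U}|^2$ the first term is bounded by $C\|qg\|_{\mathcal{L}^2}^2+\delta J$; the second by $\|qg\|_{\mathcal{L}^2}^2+\|\nabla_q\mathcal{U}\,g\|_{\mathcal{L}^2}^2$, both already controlled by $\|\nabla_q g\|_{\mathcal{L}^2}^2$ through the first line; and Young's inequality on the third term produces a small multiple of $J$ together with $C\int|q|^2|\nabla_q g|^2\psi_\infty\,dq\le C\|\langle q\rangle\nabla_q g\|_{\mathcal{L}^2}^2$. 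Absorbing the $J$-contributions (again using $\delta<1$) yields $J\lesssim\|\langle q\rangle\nabla_q g\|_{\mathcal{L}^2}^2$, and then $|q|^4\lesssim|q|^2(1+|\nabla_q\mathcal{U}|^2)$ gives $\||q|^2 g\|_{\mathcal{L}^2}^2=\int|q|^4 g^2\psi_\infty\,dq\lesssim\|qg\|_{\mathcal{L}^2}^2+J\lesssim\|\langle q\rangle\nabla_q g\|_{\mathcal{L}^2}^2$. Applying these four bounds to $\Lambda^\sigma g$ and integrating in $x$ as in the first paragraph will finish the proof.

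The step I expect to be the main obstacle is the absorption bookkeeping: the weighted quantities $I$ and $J$ must be known a priori finite before they can be moved across the inequality, which is why the argument has to be run first for smooth, rapidly decaying $g$ and then closed by density, and why the strict inequality $\delta<1$ in \eqref{potential1} is essential — it is precisely the positive gap $1-\delta$ that allows Young's inequality to close the estimate. The only genuinely external ingredient is the Poincar\'e inequality for $\psi_\infty\,dq$; everything else is elementary once one notices that routing the top-order estimate through the weight $|q|^2$ rather than $|\nabla_q\mathcal{U}|^2$ bypasses any need to control $\nabla_q^2\mathcal{U}$.
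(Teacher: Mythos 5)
The paper does not prove this lemma at all --- it is quoted from \cite{2017Global} --- so there is no internal proof to compare against; your argument is a correct, self-contained reconstruction, and it follows essentially the standard route of \cite{2017Global}: reduce to $\sigma=0$ by commuting $\Lambda^\sigma_x$ with the $q$-operations, then integrate by parts against $\nabla_q\psi_\infty=-\nabla_q\mathcal{U}\,\psi_\infty$ and absorb the $\delta|\nabla_q\mathcal{U}|^2$ contribution from $\Delta_q\mathcal{U}\le C+\delta|\nabla_q\mathcal{U}|^2$ using $\delta<1$. Two small remarks: the Poincar\'e inequality $\|g\|_{\mathcal{L}^2}\lesssim\|\nabla_qg\|_{\mathcal{L}^2}$ that you invoke for the mean-zero reduction is already available in the paper as Lemma \ref{Lemma5}, so it need not be imported as an external ingredient; and your observation that the top-order estimate should be routed through the weight $|q|^2$ (so that only $\Delta_q\mathcal{U}$, never $\nabla_q^2\mathcal{U}$, appears) together with the a priori finiteness/density caveat is exactly the right bookkeeping for the absorption step.
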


\begin{lemm}\cite{2017Global}\label{Lemma5}
  Assume $g\in H^s(\mathcal{L}^{2})$, then it holds that
$$|\tau(g)|^2\lesssim \|g\|^2_{\mathcal{L}^{2}}\lesssim\|\nabla_qg\|^2_{\mathcal{L}^{2}} .$$
\end{lemm}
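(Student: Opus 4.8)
\emph{Proof proposal.} The statement is the conjunction of two unrelated inequalities, which I would establish separately. The first, $|\tau(g)|^2\lesssim\|g\|^2_{\mathcal{L}^2}$, is a direct consequence of the Cauchy--Schwarz inequality together with a single uniform second--moment bound on $q\nabla_q\mathcal{U}$ extracted from \eqref{potential1}, and it needs no hypothesis on the mean of $g$. The second, $\|g\|^2_{\mathcal{L}^2}\lesssim\|\nabla_q g\|^2_{\mathcal{L}^2}$, is the Poincar\'e (spectral gap) inequality for the equilibrium probability measure $\psi_\infty\,dq$; it is meant, and is only used, for $g$ with $\int_{\mathbb{R}^d}g\psi_\infty\,dq=0$ (otherwise a nonzero constant is a counterexample), which is compatible with the standing normalization $\int_{\mathbb{R}^d}\psi\,dq=\int_{\mathbb{R}^d}\psi_\infty\,dq=1$.

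For the first bound, from $\tau_{ij}(g)=\int_{\mathbb{R}^d}q_i\nabla_{q_j}\mathcal{U}\,g\,\psi_\infty\,dq$ and Cauchy--Schwarz in $\mathcal{L}^2$,
\begin{align*}
|\tau(g)|^2=\sum_{i,j}|\tau_{ij}(g)|^2\leq\Big(\int_{\mathbb{R}^d}|q|^2|\nabla_q\mathcal{U}|^2\psi_\infty\,dq\Big)\,\|g\|^2_{\mathcal{L}^2},
\end{align*}
so it suffices to bound the prefactor by a constant. I would do this by integrating by parts against $\nabla_q\psi_\infty=-\nabla_q\mathcal{U}\,\psi_\infty$ (with a cutoff $|q|\le R$, letting $R\to\infty$ afterwards): since $\nabla_q\!\cdot\!(|q|^2\nabla_q\mathcal{U})=2q\cdot\nabla_q\mathcal{U}+|q|^2\Delta_q\mathcal{U}$,
\begin{align*}
\int_{\mathbb{R}^d}|q|^2|\nabla_q\mathcal{U}|^2\psi_\infty\,dq=\int_{\mathbb{R}^d}\big(2q\cdot\nabla_q\mathcal{U}+|q|^2\Delta_q\mathcal{U}\big)\psi_\infty\,dq.
\end{align*}
Now $2q\cdot\nabla_q\mathcal{U}\le|q|^2+|\nabla_q\mathcal{U}|^2$ and, by $\Delta_q\mathcal{U}\le C+\delta|\nabla_q\mathcal{U}|^2$, $|q|^2\Delta_q\mathcal{U}\le C|q|^2+\delta|q|^2|\nabla_q\mathcal{U}|^2$; using $\int_{\mathbb{R}^d}|q|^2\psi_\infty\,dq\le(\int_{\mathbb{R}^d}|q|^4\psi_\infty\,dq)^{1/2}\le C$ and $\int_{\mathbb{R}^d}|\nabla_q\mathcal{U}|^2\psi_\infty\,dq\le C$ from \eqref{potential1}, the right-hand side is $\le C'+\delta\int_{\mathbb{R}^d}|q|^2|\nabla_q\mathcal{U}|^2\psi_\infty\,dq$. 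As $\delta\in(0,1)$, the last term is absorbed, yielding $\int_{\mathbb{R}^d}|q|^2|\nabla_q\mathcal{U}|^2\psi_\infty\,dq\le C'/(1-\delta)$ and hence $|\tau(g)|^2\lesssim\|g\|^2_{\mathcal{L}^2}$.

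For the second bound I would pass to the Witten--Laplacian (ground--state transform) representation. Setting $h:=g\sqrt{\psi_\infty}$, so that $\|g\|^2_{\mathcal{L}^2}=\int_{\mathbb{R}^d}|h|^2\,dq$ and $\int_{\mathbb{R}^d}h\sqrt{\psi_\infty}\,dq=\int_{\mathbb{R}^d}g\psi_\infty\,dq=0$, and using $\nabla_q\sqrt{\psi_\infty}=-\tfrac12\nabla_q\mathcal{U}\sqrt{\psi_\infty}$ together with $\int_{\mathbb{R}^d}h\nabla_q h\cdot\nabla_q\mathcal{U}\,dq=-\tfrac12\int_{\mathbb{R}^d}|h|^2\Delta_q\mathcal{U}\,dq$, a short computation gives
\begin{align*}
\|\nabla_q g\|^2_{\mathcal{L}^2}=\int_{\mathbb{R}^d}\Big|\nabla_q h+\tfrac12 h\nabla_q\mathcal{U}\Big|^2\,dq=\int_{\mathbb{R}^d}|\nabla_q h|^2\,dq+\int_{\mathbb{R}^d}W\,|h|^2\,dq,\qquad W:=\tfrac14|\nabla_q\mathcal{U}|^2-\tfrac12\Delta_q\mathcal{U}.
\end{align*}
Thus $\|\nabla_q g\|^2_{\mathcal{L}^2}$ is the Dirichlet form of the Schr\"odinger operator $-\Delta_q+W$, whose ground state is $\sqrt{\psi_\infty}$ with eigenvalue $0$ (indeed $(-\Delta_q+W)\sqrt{\psi_\infty}=0$), and $h$ lies in its orthogonal complement. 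The desired inequality is therefore exactly the statement that $-\Delta_q+W$ has a spectral gap above $0$.

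I would derive this from the confinement built into \eqref{potential1}: $|q|\lesssim 1+|\nabla_q\mathcal{U}|$ forces $|\nabla_q\mathcal{U}(q)|\to\infty$ as $|q|\to\infty$, which combined with $\Delta_q\mathcal{U}\le C+\delta|\nabla_q\mathcal{U}|^2$ makes $W(q)+K\to+\infty$ for a suitable constant $K$, so $-\Delta_q+W$ has compact resolvent and hence discrete spectrum with a gap. This last step is the only non-routine point: in the regime $\delta\ge\tfrac12$ the crude bound $W\ge(\tfrac14-\tfrac\delta2)|\nabla_q\mathcal{U}|^2-C$ ceases to be coercive and one needs a finer argument (a Lyapunov-function or weighted-Hardy-type criterion), but this is precisely the Poincar\'e inequality already established for this class of potentials in \cite{2017Global}, which I would invoke to close the proof. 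The chain $|\tau(g)|^2\lesssim\|g\|^2_{\mathcal{L}^2}\lesssim\|\nabla_q g\|^2_{\mathcal{L}^2}$ then follows by combining the two bounds.
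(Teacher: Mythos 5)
Your proposal is correct in substance, but note that the paper does not prove this lemma at all: it is quoted verbatim from \cite{2017Global}, so any argument you give is necessarily a ``different route.'' What you supply is a genuine proof sketch of the cited result. The first inequality is handled cleanly and self-containedly: Cauchy--Schwarz reduces it to $\int_{\mathbb{R}^d}|q|^2|\nabla_q\mathcal{U}|^2\psi_\infty\,dq<\infty$, and your integration by parts against $\nabla_q\psi_\infty=-\nabla_q\mathcal{U}\psi_\infty$, combined with the moment bounds and $\Delta_q\mathcal{U}\le C+\delta|\nabla_q\mathcal{U}|^2$ with $\delta<1$, absorbs the top-order term correctly (the only loose end is the standard justification that the boundary terms in the cutoff integration by parts vanish along a subsequence $R_n\to\infty$, which presupposes a qualitative finiteness that should be stated). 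You are also right that the second inequality is a Poincar\'e inequality for $\psi_\infty\,dq$ and that the zero-mean condition $\int_{\mathbb{R}^d}g\psi_\infty\,dq=0$ is indispensable --- the lemma as printed omits it (unlike Lemma \ref{Lemma4}), and constants are counterexamples; in the paper it is only ever applied after \eqref{1ineq2} guarantees zero mean, so your reading is the intended one. The ground-state transform computation ($W=\tfrac14|\nabla_q\mathcal{U}|^2-\tfrac12\Delta_q\mathcal{U}$, with $(-\Delta_q+W)\sqrt{\psi_\infty}=0$ and $h\perp\sqrt{\psi_\infty}$) is correct.

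The one real gap is the one you flag yourself: the coercivity bound $W\ge(\tfrac14-\tfrac{\delta}{2})|\nabla_q\mathcal{U}|^2-C$ only yields $W\to+\infty$, hence compact resolvent and a spectral gap, when $\delta<\tfrac12$, whereas \eqref{potential1} only assumes $\delta\in(0,1)$. For $\delta\in[\tfrac12,1)$ your argument does not close on its own and you fall back on citing \cite{2017Global} --- which is circular as a proof of the lemma (that citation is the lemma), even though it exactly matches the paper's own level of justification. If you want a self-contained statement, either restrict to $\delta<\tfrac12$, or replace the compact-resolvent step by a Lyapunov/local-Poincar\'e criterion that genuinely uses only \eqref{potential1}; also note that compact resolvent alone gives a gap only after you additionally argue that $0$ is a simple eigenvalue at the bottom of the spectrum (nonnegativity of the form plus a Perron--Frobenius argument), which you should say explicitly.
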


\begin{lemm}\cite{Moser1966A}\label{Lemma6}
Let $s\geq 1$. Assume $p_1,...,p_4$ and $p\in (1,\infty)$ with $\frac 1 p =\frac 1 {p_1}+\frac 1 {p_2}=\frac 1 {p_3}+\frac 1 {p_4}$. Then it holds that
$$\|[\Lambda^s, f]g\|_{L^p}\leq C\left(\|\Lambda^{s}f\|_{L^{p_1}}\|g\|_{L^{p_2}}+\|\nabla f\|_{L^{p_3}}\|\Lambda^{s-1}g\|_{L^{p_4}}\right).~~$$
Analogously,
$$\|[\Lambda^s, f]g\|_{L^2(\mathcal{L}^{2})}\leq C\left(\|\Lambda^{s}f\|_{L^2}\|g\|_{L^\infty(\mathcal{L}^{2})}+\|\nabla f\|_{L^\infty}\|\Lambda^{s-1}g\|_{L^2(\mathcal{L}^{2})}\right).$$
\end{lemm}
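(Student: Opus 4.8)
\emph{Proof strategy.} Since $[\Lambda^s,f]g=\Lambda^s(fg)-f\,\Lambda^sg$ is the classical Kato--Ponce (Moser-type) commutator, the plan is to prove it by Bony's paraproduct calculus (see \cite{Bahouri2011}). First I would write
\begin{align*}
fg=T_fg+T_gf+R(f,g),\qquad f\,\Lambda^sg=T_f(\Lambda^sg)+T_{\Lambda^sg}f+R(f,\Lambda^sg),
\end{align*}
where $T_ab=\sum_jS_{j-2}a\,\dot\Delta_jb$ is the low--high paraproduct and $R(a,b)=\sum_j\dot\Delta_ja\,\widetilde{\dot\Delta}_jb$ the high--high remainder, so that, after subtraction, $[\Lambda^s,f]g=\mathrm{I}+\mathrm{II}+\mathrm{III}$ with
\begin{align*}
\mathrm{I}=\Lambda^sT_fg-T_f(\Lambda^sg),\qquad\mathrm{II}=\Lambda^sT_gf-T_{\Lambda^sg}f,\qquad\mathrm{III}=\Lambda^sR(f,g)-R(f,\Lambda^sg).
\end{align*}
The goal is then to show that $\mathrm{I}$ contributes the term $\|\nabla f\|_{L^{p_3}}\|\Lambda^{s-1}g\|_{L^{p_4}}$ and that $\mathrm{II}+\mathrm{III}$ contributes $\|\Lambda^sf\|_{L^{p_1}}\|g\|_{L^{p_2}}$.

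The heart of the matter, and the step I expect to cost the most, is $\mathrm{I}$. In each dyadic block one is left with $\Lambda^s(S_{j-2}f\,\dot\Delta_jg)-S_{j-2}f\,\Lambda^s\dot\Delta_jg$; I would freeze the slowly varying factor $S_{j-2}f$, Taylor-expand the multiplier $\xi\mapsto|\xi|^s$ to first order about the frequency $\sim2^j$ carried by $\dot\Delta_jg$, observe that the zeroth-order term cancels in the commutator, and check that the first-order term transfers exactly one derivative from $g$ onto $f$. This is precisely where the hypothesis $s\geq1$ is used, through $\bigl|\,|\xi|^s-|\eta|^s\,\bigr|\lesssim|\xi-\eta|\,(|\xi|^{s-1}+|\eta|^{s-1})$; it leaves a block bound of the shape $2^{k(s-1)}\|\nabla S_{j-2}f\|_{L^{p_3}}\|\dot\Delta_jg\|_{L^{p_4}}$, which I would sum by the Littlewood--Paley square-function characterisation of $L^p$ ($1<p<\infty$), a Hardy--Littlewood maximal estimate to replace $\nabla S_{j-2}f$ by $\nabla f$, and the identification $\|\Lambda^{s-1}g\|_{L^{p_4}}\simeq\bigl\|(\sum_j2^{2j(s-1)}|\dot\Delta_jg|^2)^{1/2}\bigr\|_{L^{p_4}}$. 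For $\mathrm{II}$ the operator $\Lambda^s$ lands on the high-frequency factor $f$, so a block-wise Hölder estimate (dominating the low-frequency partner by a maximal function) and the same square-function summation give $\|\mathrm{II}\|_{L^p}\lesssim\|\Lambda^sf\|_{L^{p_1}}\|g\|_{L^{p_2}}$; $\mathrm{III}$ is handled similarly, using the symmetry of the remainder and the fact that $\Lambda^s$ costs at most $2^{ks}$ on a function spectrally supported in $\{|\xi|\lesssim2^k\}$, the borderline low-frequency summation being afforded by $s\geq1>0$. (As a shortcut one could instead read the whole inequality off the Coifman--Meyer bilinear multiplier theorem applied to the symbol splitting $|\xi|^s-|\xi-\eta|^s=O(|\eta|^s)+O(|\eta|\,|\xi-\eta|^{s-1})$, after truncating harmlessly away from the zero frequency.)

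For the $\mathcal L^2$-valued statement I would run the identical argument with $g$ regarded as a function of $x$ valued in the Hilbert space $\mathcal L^2$: the Littlewood--Paley projections act only in the $x$-variable, Minkowski's inequality pulls the $q$-integral $\|\cdot\|_{\mathcal L^2}$ inside the $x$-square function, and one specialises the exponents to $p=2$, $(p_1,p_2)=(2,\infty)$ and $(p_3,p_4)=(\infty,2)$. No new difficulty arises there; throughout, the only genuinely delicate point is the first-order symbol expansion in $\mathrm{I}$ (which is exactly what forces $s\geq1$), the rest being the routine but somewhat tedious bookkeeping of Hölder exponents and vector-valued square functions in $\mathrm{I}$ and $\mathrm{III}$.
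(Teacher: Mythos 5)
The paper does not prove this lemma; it is quoted directly from the cited reference, so there is no ``paper proof'' to compare your argument against. Your paraproduct-based argument is, however, the standard modern route to the Kato--Ponce/Kenig--Ponce--Vega commutator estimate, and it is correct in outline: the Bony splitting $[\Lambda^s,f]g=\mathrm{I}+\mathrm{II}+\mathrm{III}$, the frequency-localised symbol (commutator) lemma for $\mathrm{I}$ --- which is indeed exactly where $s\geq1$ enters, via $\big||\xi|^s-|\eta|^s\big|\lesssim|\xi-\eta|\,(|\xi|^{s-1}+|\eta|^{s-1})$ on the relevant annulus --- the Bernstein-type estimate $\|\Lambda^s h\|_{L^p}\lesssim 2^{js}\|h\|_{L^p}$ for $h$ spectrally supported in $\{|\xi|\lesssim2^j\}$ in $\mathrm{II}$ and $\mathrm{III}$, the square-function and vector-valued Fefferman--Stein maximal summation (this is the step that consumes $p\in(1,\infty)$), and the passage to $\mathcal{L}^2(dq)$-valued $g$ by Hilbert-space-valued Calder\'on--Zygmund theory together with Minkowski's inequality. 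The Coifman--Meyer shortcut you mention in passing is an equally legitimate route.

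Two small remarks, neither of which is a gap. First, for the remainder $\mathrm{III}$ only $s>0$ is needed (the low-frequency resummation), and the same is true of the piece $T_{\Lambda^sg}f$ inside $\mathrm{II}$, where one bounds $S_{j-2}\Lambda^sg$ by $2^{js}\|g\|$ using $s>0$; the hypothesis $s\geq1$ is consumed only in $\mathrm{I}$, so your phrasing ``afforded by $s\geq1>0$'' is harmless but not the precise reason. Second, when you specialise to $(p_3,p_4)=(\infty,2)$ and $(p_1,p_2)=(2,\infty)$ for the $\mathcal{L}^2$-valued version, the low-frequency factors $\nabla S_{j-2}f$ and $S_{j-2}g$ are bounded directly in $L^\infty$ with no maximal function at all, and the Fefferman--Stein/square-function machinery runs only on the $\mathcal{L}^2$-valued blocks $\dot\Delta_jg$ in $L^2_x$; stating this explicitly would tighten the writeup. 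Overall the proposal is sound.
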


\section{Global strong solutions with small data}
This section is devoted to investigating the global classical solutions for the micro-macro model for compressible polymeric fluids with dimension $d\geq2$ . We divide it into two Propositions to prove Theorem \ref{th1} . First, using a standard iterating method as in \cite{2017Global} , we can deduce that the existence of local solutions in some appropriate spaces. We omit the proof and give the following Proposition.
\begin{prop}\label{pro1}
Let $d\geq 2~and~s>1+\frac d 2$. Assume $E_{\lambda}(0)\leq \frac {\epsilon} 2$. Then there exist a time $T>0$ such that the micro-macro polymeric system \eqref{eq1} admits a unique local classical solution $(\rho_,u,g)\in L^{\infty}(0,T;H^s\times H^s\times H^s(\mathcal{L}^2))$ and we have
\begin{align*}
\sup_{t\in[0,T]} E_{\lambda}(t)+\int_{0}^{T}D_{\lambda}(t)dt\leq \epsilon,
\end{align*}
\end{prop}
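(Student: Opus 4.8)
The statement asserts local existence in $L^\infty(0,T;H^s\times H^s\times H^s(\mathcal L^2))$ together with the uniform bound $\sup_{[0,T]}E_\lambda(t)+\int_0^TD_\lambda(t)\,dt\le\epsilon$, starting from $E_\lambda(0)\le\epsilon/2$. Since the paper explicitly says this follows ``using a standard iterating method as in \cite{2017Global}'' and omits the proof, I would write a short proof sketch rather than a full argument. The approach I would take is the classical Friedrichs-type iteration scheme combined with energy estimates.

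First I would set up the iteration: let $(\rho^0,u^0,g^0)=(\rho_0,u_0,g_0)$ (or its mollification/frequency truncation), and define $(\rho^{n+1},u^{n+1},g^{n+1})$ as the solution of the linear system obtained from \eqref{eq1} by freezing the coefficients and the nonlinearities at level $n$: the continuity equation becomes a linear transport equation for $\rho^{n+1}$ with velocity $u^n$; the momentum equation becomes a linear parabolic system for $u^{n+1}$ with coefficients $\tfrac1{1+\rho^n}$ and forcing involving $\tau(g^n)$ and $u^n\cdot\nabla u^n$; and the $g$-equation becomes a linear Fokker--Planck equation $g^{n+1}_t+\mathcal Lg^{n+1}=F^n$ with $F^n$ built from $u^n,g^n$. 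Each linear problem is solvable in the relevant space by standard theory (transport estimates for $\rho$, parabolic regularity for $u$, and the spectral/dissipative structure of $\mathcal L$ with the confining potential for $g$, using Lemmas \ref{Lemma4}--\ref{Lemma6}).

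Next I would establish the uniform-in-$n$ bound. Define $E^n(t)$ and $D^n(t)$ as $E_\lambda,D_\lambda$ evaluated on $(\rho^n,u^n,g^n)$. Performing the $H^s$ energy estimate on the linear system for $(\rho^{n+1},u^{n+1},g^{n+1})$ — multiplying by the appropriate weights $h(\rho^n)$, $(1+\rho^n)$, applying $\Lambda^m$ for $m=0,s$, using the commutator estimate of Lemma \ref{Lemma6}, the tensor bounds of Lemmas \ref{Lemma4}--\ref{Lemma5}, the Gagliardo--Nirenberg inequality (Lemma \ref{Lemma3}) and the Sobolev embedding $H^{s-1}\hookrightarrow L^\infty$ (valid since $s>1+\tfrac d2$) — yields a differential inequality of the form
\begin{align*}
\frac{d}{dt}E^{n+1}(t)+D^{n+1}(t)\le C\,P\!\left(E^n(t)\right)\left(1+E^{n+1}(t)+D^n(t)\right),
\end{align*}
for some polynomial $P$, provided $\|\rho^n\|_{L^\infty}\le\tfrac12$ so the weights are comparable to $1$. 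A continuity argument then shows: if $E^n(t)\le\epsilon$ on $[0,T]$ for $T=T(\epsilon)$ small enough, then $E^{n+1}(t)\le\epsilon$ and $\int_0^TD^{n+1}\le\epsilon$ on $[0,T]$ as well, so the bound propagates to all $n$ by induction (the base case being $E_\lambda(0)\le\epsilon/2<\epsilon$). This also keeps $\|\rho^n\|_{L^\infty}$ small, closing the consistency requirement, and one checks the structural conditions $\int g^n\psi_\infty\,dq=0$ and $1+g^n>0$ are preserved along the scheme.

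Finally I would show the sequence is Cauchy. Writing $\delta\rho^{n}=\rho^{n+1}-\rho^n$, etc., the differences solve the same linear system with right-hand sides controlled by lower-order norms of the differences times the (bounded) high norms of the iterates; an energy estimate at a lower regularity level — say in $L^2\times L^2\times L^2(\mathcal L^2)$ — gives $\sup_{[0,T]}\big(\|\delta\rho^n\|_{L^2}^2+\|\delta u^n\|_{L^2}^2+\|\delta g^n\|_{L^2(\mathcal L^2)}^2\big)\le \tfrac12\sup_{[0,T]}(\cdots)^{n-1}$ after possibly shrinking $T$, so the iterates converge strongly in the low-regularity space; interpolating with the uniform $H^s$ bound upgrades the convergence to $C([0,T];H^{s'})$ for any $s'<s$, which suffices to pass to the limit in \eqref{eq1} and obtain a solution in $L^\infty(0,T;H^s\times H^s\times H^s(\mathcal L^2))$ satisfying the stated bound. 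Uniqueness follows from the same difference estimate applied to two solutions.

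The main obstacle is the energy estimate at the top order $m=s$: the density has no dissipation, so the $\Lambda^s\rho$ term must be controlled purely by transport-type cancellation, which forces a careful treatment of the commutator $[\Lambda^s,u\cdot\nabla]\rho$ and of the coupling terms between $\nabla\rho$ and $\operatorname{div}u$ (this is exactly where the precise choice of the weight $h(\rho)$ in $E_\lambda$ and the structure of $D_\lambda$ matter); and for $g$ one must exploit the microscopic dissipation $\|\nabla_q g\|_{H^s(\mathcal L^2)}^2$ together with the weighted bound $\lambda\|\langle q\rangle\nabla_q g\|_{H^{s-1}(\mathcal L^2)}^2$ to absorb the $q$-growth coming from the terms $\nabla u\,q\,\nabla_q\mathcal U$ and $\nabla_q\!\cdot(\nabla u\,q\,g\,\psi_\infty)/\psi_\infty$, using the potential assumptions \eqref{potential1}--\eqref{potential2} via Lemmas \ref{Lemma4}--\ref{Lemma5}. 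Since the analogous estimates are carried out in detail in the global-existence argument of Section 3 (Proposition \ref{pro1}'s companion a priori estimate), for the local statement I would simply refer to \cite{2017Global} and to those forthcoming estimates, keeping this proof brief.
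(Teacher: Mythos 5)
The paper deliberately omits this proof, stating only that it follows ``using a standard iterating method as in \cite{2017Global}''; your sketch — Friedrichs-type linearized iteration with frozen coefficients, uniform-in-$n$ $H^s$ energy bounds closed by smallness, a contraction argument in the low-regularity norm $L^2\times L^2\times L^2(\mathcal{L}^2)$, and uniqueness from the same difference estimate — is precisely that standard method, and your identification of the delicate points (no dissipation for $\rho$ at top order, the weighted $\langle q\rangle$ term to absorb the $q$-growth in the Fokker--Planck drift) matches the structure developed in Lemmas \ref{low}--\ref{mix}. This is essentially the same approach the paper intends, so the proposal is consistent with the paper's (omitted) proof.
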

Then we prove a key global priori estimate for local solutions in the following proposition.
\begin{prop}\label{pro2}
Let $d\geq 2~and~s>1+\frac d 2$. Let $(\rho,u,g)\in L^{\infty}(0,T;H^s\times H^s\times H^s(\mathcal{L}^2))$  be local solutions constructed in Proposition \ref{pro1} . If $\mathop{\sup}\limits_{t\in[0,T)} E_{\lambda}(t)\leq \epsilon$, then there exist $C_0>1$ such that
\begin{align}\label{decayenergy}
\sup_{t\in[0,T]} E_{\lambda}(t)+\int_{0}^{T}D_{\lambda}(t)dt\leq C_{0}E_{\lambda}(0)~.
\end{align}
\end{prop}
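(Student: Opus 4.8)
The plan is to run a standard energy estimate argument directly on the perturbation system \eqref{eq1}, using the energy functional $E_\lambda$ and dissipation functional $D_\lambda$ already introduced, and to close a nonlinear differential inequality of the form $\frac{d}{dt}\widetilde E_\lambda(t) + cD_\lambda(t) \lesssim \sqrt{E_\lambda(t)}\,D_\lambda(t)$, where $\widetilde E_\lambda$ is a modification of $E_\lambda$ augmented by a small cross term $\eta\langle\Lambda^{s-1}u,\Lambda^s\rho\rangle$ (and its low-frequency analogue) needed to recover dissipation of $\rho$. Under the bootstrap hypothesis $E_\lambda(t)\le\epsilon$ with $\epsilon$ sufficiently small, the right-hand side is absorbed into the dissipation, and integrating in time yields \eqref{decayenergy} with a constant $C_0$ depending only on the coefficients once $\widetilde E_\lambda \simeq E_\lambda$.

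Concretely, I would proceed in the following steps. First, the $L^2$-level estimates: multiply the $\rho$-equation by $h(\rho)\rho$ (with $h(\rho)=P'(1+\rho)/(1+\rho)$ chosen so the pressure term is symmetric), the $u$-equation by $(1+\rho)u$, and test the $g$-equation against $g\psi_\infty$ in $\mathcal L^2$; add these up. The coupling terms $\frac{1}{1+\rho}\mathrm{div}\,\tau$ in the velocity equation and $-\mathrm{div}\,u - \nabla u\,q\nabla_q\mathcal U$ in the $g$-equation cancel at leading order against each other after integration by parts and using $\tau_{ij}(g)=\int q_i\nabla_{q_j}\mathcal U\, g\psi_\infty\,dq$; the coercivity $\langle \mathcal L g, g\rangle_{\mathcal L^2} = \|\nabla_q g\|_{\mathcal L^2}^2$ supplies the microscopic dissipation. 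Second, the same procedure applied to $\Lambda^s$ of each equation, using the commutator estimates of Lemma \ref{Lemma6} to handle $[\Lambda^s,\cdot]$ terms and Lemma \ref{Lemma3} (Gagliardo–Nirenberg) together with $s>1+\frac d2$ so that $H^{s-1}\hookrightarrow L^\infty$, giving top-order control. Third, the weighted estimates: apply $\langle q\rangle$ (and $\langle q\rangle\Lambda^{s-1}$) to the $g$-equation and test in $\mathcal L^2$; here the structure conditions \eqref{potential1}–\eqref{potential2} on $\mathcal U$ are used to bound the terms $\langle q\rangle\nabla u\, q\, g$, $\langle q\rangle \nabla u\, q\,\nabla_q\mathcal U$, and the weighted commutators, and Lemma \ref{Lemma4} converts weighted quantities like $\|q\nabla_q\mathcal U g\|$ and $\||q|^2 g\|$ into $\|\langle q\rangle\nabla_q g\|$, i.e.\ back into $D_\lambda$. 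The weight $\lambda$ is taken small so that the new error terms this step introduces into the macroscopic estimates are dominated. Fourth, add the cross-term estimate: applying $\Lambda^{s-1}$ to the velocity equation, pairing with $\Lambda^s\rho$, and applying $\Lambda^s$ to the density equation, pairing with $\Lambda^{s-1}u$, produces $\frac{d}{dt}\langle\Lambda^{s-1}u,\Lambda^s\rho\rangle + c\|\Lambda^s\rho\|_{L^2}^2 \lesssim \|\nabla u\|_{H^s}^2 + (\text{nonlinear})$, which gives the missing $\|\nabla\rho\|_{H^{s-1}}^2$ in $D_\lambda$; a parallel low-frequency cross term handles $\|\rho\|_{L^2}$-type dissipation at order zero. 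Finally, choose $\eta,\lambda,\epsilon$ in that order so that $\widetilde E_\lambda := E_\lambda + \eta(\text{cross terms}) \simeq E_\lambda$, the full $D_\lambda$ appears with a positive constant on the left, and all nonlinear remainders are bounded by $C\sqrt{E_\lambda}\,D_\lambda \le \frac12 D_\lambda$; Grönwall/direct integration then closes the estimate.

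The main obstacle I expect is bookkeeping the coupling between the macroscopic $(\rho,u)$ block and the Fokker–Planck variable $g$, specifically ensuring that every term generated by the non-Hookean potential $\mathcal U$ — which lacks the explicit Gaussian structure — is genuinely controlled by $D_\lambda$ rather than only by $E_\lambda$. This is exactly where the weighted norms $\|\langle q\rangle\Lambda^m g\|_{L^2(\mathcal L^2)}$ and the hypotheses \eqref{potential1}–\eqref{potential2} are essential: terms such as $\nabla u\, q\, g\psi_\infty$ after differentiation produce factors of $|q|$ and $|\nabla_q\mathcal U|$ that must be traded, via Lemma \ref{Lemma4}, for $\|\langle q\rangle\nabla_q g\|$. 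A secondary technical point is that the cross term must be inserted at \emph{both} the zeroth and $(s-1,s)$ levels, and the coefficients $\eta$ at the two levels, together with $\lambda$, have to be tuned consistently so that $\widetilde E_\lambda$ stays equivalent to $E_\lambda$; this is routine but must be done carefully to get a clean constant $C_0>1$ independent of $T$.
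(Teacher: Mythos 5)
Your proposal is correct and follows essentially the same route as the paper: the paper's proof of Proposition \ref{pro2} is organized into exactly the three blocks you describe (an $L^2$-level estimate in Lemma \ref{low}, a $\Lambda^s$-level estimate in Lemma \ref{high}, and the $\langle q\rangle$-weighted estimate in Lemma \ref{mix}), with the cross terms $\eta\int u\nabla\rho\,dx$ and $\eta\int\Lambda^{s-1}u\,\nabla\Lambda^{s-1}\rho\,dx$ inserted at both levels to recover the $\rho$-dissipation, Lemma \ref{Lemma4} used to trade micro-weighted quantities for $\|\langle q\rangle\nabla_q g\|$, and $\eta,\lambda,\delta,\epsilon$ tuned in that order so that $E_{\eta,\lambda}\simeq E_\lambda$, $D_{\eta,\lambda}\simeq D_\lambda$, and the nonlinear remainders are absorbed into the dissipation before integrating. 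The only small imprecision is your phrase about the low-order cross term giving ``$\|\rho\|_{L^2}$-type dissipation''; what it actually yields (and what appears in $D_\lambda$) is $\eta\gamma\|\nabla\rho\|^2_{L^2}$, which together with the $\|\nabla\Lambda^{s-1}\rho\|^2_{L^2}$ from the high-order cross term recovers the full $\|\nabla\rho\|^2_{H^{s-1}}$.
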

We divide it into three steps to prove Proposition \ref{pro2} . The estimates in the first two steps is similar to those in \cite{luozhaonanFENE} with different external force $\tau$. It should be underlined that such difference results in additional estimates for derivatives with micro weight as shown in the third step.
\subsection{Estimates on lower order derivatives}
\begin{lemm}\label{low}
Let $(\rho_,u,g)$ be local classical solutions considered in proposition \ref{pro2} , then there exists positive constant $\eta$ and $\delta$ such that
\begin{align}\label{low estimate}
		\frac {d} {dt}& \left(\|h(\rho)^{\frac 1 2}\rho \|^2_{L^2}+\|(1+\rho)^{\frac 1 2}u\|^2_{L^2}+\|g\|^2_{L^2(\mathcal{L}^{2})}+2\eta\int_{\mathbb{R}^{d}} u\nabla\rho dx\right)  \\ \notag
&~~~~+ 2\left(\mu\|\nabla u\|^2_{L^2}+(\mu+\mu')\|divu\|^2_{L^2}+
\eta\gamma\|\nabla\rho\|^2_{L^2}+\|\nabla_q g\|^2_{L^2(\mathcal{L}^{2})} \right)  \\ \notag
&\lesssim  \left(\epsilon^{\frac{1}{2}} + \epsilon + \delta + \eta + \eta C_{\delta} \right)\left(\|\nabla \rho\|^2_{H^{s-1}} + \|\nabla u\|^2_{H^{s}} + \|\nabla_q g\|^2_{H^{s}(\mathcal{L}^{2})}\right).
\end{align}
\end{lemm}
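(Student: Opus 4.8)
The plan is to perform a standard energy estimate on the zeroth-order system \eqref{eq1}, then add a small multiple $\eta$ of the mixed term $\int u\cdot\nabla\rho\,dx$ to recover dissipation for $\rho$, which is otherwise absent from the parabolic structure. First I would test the density equation $\rho_t+\mathrm{div}\,u(1+\rho)=-u\cdot\nabla\rho$ against $h(\rho)\rho$ (with $h(\rho)=\frac{P'(1+\rho)}{(1+\rho)^2}$ the natural weight making the pressure term skew-symmetric against the velocity equation), test the velocity equation against $(1+\rho)u$, and test the $g$-equation against $g$ in $L^2(\mathcal L^2)$. The viscous term in the velocity equation produces $2\mu\|\nabla u\|_{L^2}^2+2(\mu+\mu')\|\mathrm{div}\,u\|_{L^2}^2$ after integrating by parts and using $\mathrm{div}\,\Sigma(u)=\mu\Delta u+(\mu+\mu')\nabla\mathrm{div}\,u$; the operator $\mathcal L$ in the $g$-equation is nonnegative and yields $2\|\nabla_q g\|_{L^2(\mathcal L^2)}^2$ since $\langle\mathcal L g,g\rangle_{\mathcal L^2}=\|\nabla_q g\|_{\mathcal L^2}^2$. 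The coupling terms $\frac{1}{1+\rho}\mathrm{div}\,\tau$ in the velocity equation and $-\mathrm{div}\,u-\nabla u\,q\nabla_q\mathcal U$ in the $g$-equation are designed to cancel at leading order: integrating by parts moves $\mathrm{div}\,\tau$ onto $u$, and using $\tau_{ij}(g)=\int q_i\nabla_{q_j}\mathcal U\,g\psi_\infty dq$ together with $\int(-\mathrm{div}\,u-\nabla u\,q\nabla_q\mathcal U)g\psi_\infty dq$, the $\nabla u:\tau$ terms cancel, leaving only a commutator/remainder coming from the factor $\frac{1}{1+\rho}$ versus $1$, which is of size $\lesssim\|\rho\|_{L^\infty}\|\nabla u\|_{L^2}\|g\|_{\mathcal L^2}\lesssim\epsilon^{1/2}(\cdots)$.

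Next I would handle the mixed term: differentiating $2\eta\int u\cdot\nabla\rho\,dx$ in time and substituting the equations, the principal contribution is $-2\eta\gamma\|\nabla\rho\|_{L^2}^2$ coming from the pressure term $\frac{P'(1+\rho)}{1+\rho}\nabla\rho$ paired with $\nabla\rho$ (recall $P'(1)=\gamma$ with our normalization $a=1$), while $\int u_t\cdot\nabla\rho\,dx=-\int\mathrm{div}\,u\,\rho_t\,dx$ generates, via the density equation, a term $\int|\mathrm{div}\,u|^2\,dx$ plus higher order; the viscous contribution $\int\frac{1}{1+\rho}\mathrm{div}\,\Sigma(u)\cdot\nabla\rho\,dx$ is bounded by $\eta C_\delta\|\nabla u\|_{H^s}^2+\eta\delta\|\nabla\rho\|_{L^2}^2$ after an integration by parts and Young's inequality, and the $\mathrm{div}\,\tau$ term is similarly absorbed into $\eta\|\nabla_q g\|_{H^s(\mathcal L^2)}^2$ via Lemma \ref{Lemma5}. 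Choosing $\eta$ small ensures the good terms $2\mu\|\nabla u\|_{L^2}^2$, $2(\mu+\mu')\|\mathrm{div}\,u\|_{L^2}^2$, $2\|\nabla_q g\|_{L^2(\mathcal L^2)}^2$ and the newly gained $2\eta\gamma\|\nabla\rho\|_{L^2}^2$ survive on the left-hand side.

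The remaining nonlinear terms — $u\cdot\nabla\rho$ tested against $h(\rho)\rho$, $u\cdot\nabla u$ against $(1+\rho)u$, the transport and stretching terms $-u\cdot\nabla g-\frac{1}{\psi_\infty}\nabla_q\cdot(\nabla u\,qg\psi_\infty)$ against $g$, and the time derivative of the weights $h(\rho)$, $(1+\rho)$ — are all at least cubic or carry a factor of $\rho$, $u$, or $g$ which is controlled in $L^\infty$ by $\epsilon^{1/2}$ through the Sobolev embedding $H^{s-1}\hookrightarrow L^\infty$ (valid since $s>1+\frac d2$) and the bound $\sup_t E_\lambda(t)\le\epsilon$; each is therefore bounded by $(\epsilon^{1/2}+\epsilon)(\|\nabla\rho\|_{H^{s-1}}^2+\|\nabla u\|_{H^s}^2+\|\nabla_q g\|_{H^s(\mathcal L^2)}^2)$, using Lemma \ref{Lemma4} to convert $\mathcal L^2$-norms of $qg$, $\nabla_q\mathcal U g$ into $\|\nabla_q g\|_{\mathcal L^2}$ where needed. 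Collecting everything gives \eqref{low estimate}. I expect the main obstacle to be the bookkeeping of the cancellation between the coupling terms in the velocity and $g$ equations — making precise that the $\frac{1}{1+\rho}$ weight only costs an $\epsilon^{1/2}$-small remainder rather than destroying the cancellation — and, to a lesser extent, correctly identifying the weight $h(\rho)$ and the sign of the mixed-term contribution so that $\eta\gamma\|\nabla\rho\|_{L^2}^2$ comes out with the right sign; the stretching term $\frac{1}{\psi_\infty}\nabla_q\cdot(\nabla u\,qg\psi_\infty)$ also requires care since after integration by parts in $q$ it produces a term $\int\nabla u:(q\otimes\nabla_q g)\,g$-type expression plus one with $\nabla_q\mathcal U$, both handled by Lemma \ref{Lemma4} and the $L^\infty$ smallness of $\nabla u$.
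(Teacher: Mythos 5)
Your proposal follows exactly the paper's strategy: weighted $L^2$ energy estimates on the $(\rho,u,g)$ system, the cancellation $\int u\cdot\mathrm{div}\,\tau\,dx = -\int\nabla u:\tau\,dx$ against the macroscopic drift terms in the $g$-equation (using $\int g\psi_\infty dq=0$ to kill the $\mathrm{div}\,u$ coupling), a small multiple $\eta$ of $\int u\cdot\nabla\rho\,dx$ to extract $\eta\gamma\|\nabla\rho\|_{L^2}^2$, and absorption of the cubic remainders through $H^{s-1}\hookrightarrow L^\infty$ and $E_\lambda\le\epsilon$. Two small inaccuracies are worth flagging. First, the weight is $h(\rho)=\frac{P'(1+\rho)}{1+\rho}$, not $\frac{P'(1+\rho)}{(1+\rho)^2}$; with the paper's choice, testing $\eqref{eq1}_1$ against $h(\rho)\rho$ and $\eqref{eq1}_2$ against $(1+\rho)u$ makes both pressure contributions become $\int P'(1+\rho)\rho\,\mathrm{div}\,u\,dx$ and $\int P'(1+\rho)u\cdot\nabla\rho\,dx$, whose sum is the genuinely cubic $-\int P''(1+\rho)\rho\,u\cdot\nabla\rho\,dx$ (the paper's \eqref{1ineq12}); with your $(1+\rho)^{-2}$ weight the leading-order quadratic piece does not cancel and cannot be absorbed by the dissipation, so this detail actually matters. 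Second, there is no "$\frac{1}{1+\rho}$-versus-$1$ remainder" in the $\mathrm{div}\,\Sigma(u)$ or $\mathrm{div}\,\tau$ terms: testing against $(1+\rho)u$ cancels the prefactor exactly, which is precisely why that test function is used; the cost of the density weight shows up instead only in the lower-order time-derivative and transport terms ($\partial_t\rho\,|u|^2$, $\partial_t h(\rho)|\rho|^2$, $h(\rho)\rho\,u\cdot\nabla\rho$), which the paper bounds in \eqref{1ineq10}--\eqref{1ineq11}. With these corrections the proposal matches the paper's proof.
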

\begin{proof}
	multiplying~ $\psi_\infty$ to system $\eqref{eq1}_3$ and integrating over $\mathbb{R}^d$ with $q$, we deduce that
	\begin{align}\label{1ineq1}
		\partial_t\int_{\mathbb{R}^d}g\psi_{\infty}dq + u\cdot\nabla\int_{\mathbb{R}^d}g\psi_{\infty}dq = 0~,
	\end{align}
	which implies
	\begin{align}\label{1ineq2}
		\int_{\mathbb{R}^d}g\psi_{\infty}dq = \int_{\mathbb{R}^d}g_0\psi_{\infty}dq = 0~~~.
	\end{align}
	Denote~$L^2(\mathcal{L}^{2})$ inner product by $\langle f,g\rangle=\int_{\mathbb{R}^{d}}\int_{\mathbb{R}^{d}}fg\psi_\infty dqdx$ . We infer from \eqref{1ineq2} that
	\begin{align}\label{ineq3}
		\langle {\rm div}u,g\rangle = \int_{\mathbb{R}^d}{\rm div}u \int_{\mathbb{R}^d}g\psi_{\infty}dqdx = 0~.~~
	\end{align}
	Taking $L^2(\mathcal{L}^{2})$ inner product with $g$ to system $\eqref{eq1}_3$ , we obtain
	\begin{align}\label{1ineq4}
		\frac {1} {2}\frac {d} {dt} \|g\|^2_{L^2(\mathcal{L}^{2})}+\|\nabla_q g\|^2_{L^2(\mathcal{L}^{2})}-\int_{\mathbb{R}^{d}}\nabla u:\tau dx  = -\langle u\cdot\nabla g, g\rangle-\langle\frac 1 {\psi_\infty} \nabla_q\cdot(\nabla uqg\psi_\infty), g \rangle~.
	\end{align}
	Integrating by parts and applying Lemma \ref{Lemma5} , we have
	\begin{align}\label{1ineq5}
		-\langle u\cdot\nabla g,g\rangle = \frac 1 2 \langle {\rm div}u, g^2\rangle
		&\lesssim \|\nabla u\|_{L^\infty}\|g\|^2_{L^2(\mathcal{L}^{2})} \\ \notag
		&\lesssim \|\nabla u\|_{L^\infty}\|\nabla_q g\|^2_{L^2(\mathcal{L}^{2})}~,~~~~~~~~~~~~
	\end{align}
 Integrating by part leads to
	\begin{align}\label{1ineq6}
		\langle\frac 1 {\psi_\infty} \nabla_q\cdot(\nabla uqg\psi_\infty), g \rangle&=
		-\int_{\mathbb{R}^{d}}\int_{\mathbb{R}^{d}}(\nabla uqg\psi_\infty)\nabla_q g dqdx\\ \notag
		&\lesssim \|\nabla u\|_{L^\infty}\|\nabla_q g\|^2_{L^2(\mathcal{L}^{2})}~.
	\end{align}
	 We deduce from \eqref{1ineq5} and \eqref{1ineq6} that
	\begin{align}\label{1ineq7}
		~~~~\frac {1} {2}\frac {d} {dt} \|g\|^2_{L^2(\mathcal{L}^{2})}+\|\nabla_q g\|^2_{L^2(\mathcal{L}^{2})}-\int_{\mathbb{R}^{d}}\nabla u:\tau dx\lesssim \|\nabla u\|_{L^\infty}\|\nabla_q g\|^2_{L^2(\mathcal{L}^{2})}~.
	\end{align}
	Let $h(\rho)=\frac {P'(1+\rho)} {1+\rho}$ and $i(\rho)=\frac 1 {\rho+1}$. Taking $L^2$ inner product wiht $h(\rho) \rho$ to system $\eqref{eq1}_1$ , we get
	\begin{align}\label{1ineq8}
		\frac 1 2 \frac {d} {dt} \int_{\mathbb{R}^{d}}h(\rho)|\rho|^2 dx&+\int_{\mathbb{R}^{d}}P'(1+\rho)\rho \rm divu dx \\ \notag
		&=\frac 1 2 \int_{\mathbb{R}^{d}} \partial_t h(\rho) |\rho|^2 dx-\int_{\mathbb{R}^{d}}h(\rho)\rho u\cdot\nabla \rho dx.~~~~~~~
	\end{align}
Taking $L^2$ inner product wiht $(1+\rho)u$ to system $\eqref{eq1}_2$ , we have
	\begin{align}\label{1ineq9}
		\frac 1 2 &\frac {d} {dt} \int_{\mathbb{R}^{d}}(1+\rho)|u|^2 dx+\int_{\mathbb{R}^{d}}P'(1+\rho)u\nabla\rho dx-\int_{\mathbb{R}^{d}}u \rm div\Sigma(u)dx  \\ \notag
		&=\frac 1 2 \int_{\mathbb{R}^{d}} \partial_t\rho|u|^2 dx-\int_{\mathbb{R}^{d}}u\cdot\nabla u (1+\rho)u dx+\int_{\mathbb{R}^{d}}u \rm div\tau dx.~~~~~~~~~~~
	\end{align}
	Applying Lemma \ref{Lemma3} leads to
	\begin{align}\label{1ineq10}
		\frac 1 2 &\int_{\mathbb{R}^{d}} \partial_t h(\rho) |\rho|^2 dx,\int_{\mathbb{R}^{d}}h(\rho)\rho u\cdot\nabla \rho dx\\ \notag
		&\lesssim \|\nabla\rho\|_{L^2}(\|\nabla \rho\|_{L^2}\|u\|_{L^d}+\|\nabla u\|_{L^2}\|\rho\|_{L^d})\\ \notag
		&\lesssim \|(\rho,u)\|_{L^d}(\|\nabla\rho\|^2_{L^2}+\|\nabla u\|^2_{L^2})~,~~~~~~~~~~~~~~~~~~~~~~~~
	\end{align}
	and
	\begin{align}\label{1ineq11}
		\frac 1 2& \int_{\mathbb{R}^{d}} \partial_t\rho|u|^2 dx + \int_{\mathbb{R}^{d}}u\cdot\nabla u (1+\rho)u dx \\ \notag
		&\lesssim \|\nabla \rho\|_{L^2}\|\nabla u\|_{L^2}\|u\|_{L^d}+\|\nabla u\|^2_{L^2}\|u\|_{L^d}(1+\|\rho\|_{L^\infty})\\ \notag
		&\lesssim (1+\|\rho\|_{L^\infty})\|(\rho,u)\|_{L^d}(\|\nabla\rho\|^2_{L^2}+\|\nabla u\|^2_{L^2})~.
	\end{align}
Similarly, we obtain
	\begin{align}\label{1ineq12}
		-\int_{\mathbb{R}^{d}}P'(1+\rho)(u\nabla\rho+\rho {\rm div}u)dx
		&=\int_{\mathbb{R}^{d}}P''(1+\rho)\rho u\nabla\rho dx \\ \notag
		&\lesssim \|\nabla \rho\|_{L^2}\|\nabla u\|_{L^2}\|\rho\|_{L^d}+\|\nabla \rho\|^2_{L^2}\|u\|_{L^d}\\ \notag
		&\lesssim \|(\rho,u)\|_{L^d}(\|\nabla\rho\|^2_{L^2}+\|\nabla u\|^2_{L^2})~.
	\end{align}
	Combining the estimates \eqref{1ineq10}-\eqref{1ineq12} and
	\begin{align}\label{1ineq13}
		\int_{\mathbb{R}^{d}}\nabla u:\tau dx = -\int_{\mathbb{R}^{d}}u\rm div\tau dx~,~~
	\end{align}
	we infer that
	\begin{align}\label{1ineq14}
		~~~~~\frac {d} {dt}& \left(\|h(\rho)^{\frac 1 2}\rho \|^2_{L^2}+\|(1+\rho)^{\frac 1 2}u\|^2_{L^2}+\|g\|^2_{L^2(\mathcal{L}^{2})}\right)  \\ \notag
		&~~~~+2\left(\mu\|\nabla u\|^2_{L^2}+(\mu+\mu')\|{\rm div}u\|^2_{L^2}+\|\nabla_q g\|^2_{L^2(\mathcal{L}^{2})}\right)  \\ \notag
		&\lesssim \left(1+\|\rho\|_{L^\infty}\right)\|(\rho,u)\|_{L^d}\left(\|\nabla\rho\|^2_{L^2}+\|\nabla u\|^2_{L^2}\right) + \|\nabla u\|_{L^\infty}\|\nabla_q g\|^2_{L^2(\mathcal{L}^{2})}~.
	\end{align}
Taking $L^2$ inner product wiht $\nabla\rho$ to system $\eqref{eq1}_2$ , we have
	\begin{align}\label{1ineq15}
		&\frac {d} {dt} \int_{\mathbb{R}^{d}} u\nabla\rho dx+\gamma\|\nabla\rho\|^2_{L^2}= \int_{\mathbb{R}^{d}}u\nabla\rho_t dx  \\ \notag
		&~~~~+\int_{\mathbb{R}^{d}}\nabla\rho\cdot\left(i(\rho) {\rm div}\Sigma{(u)}-(h(\rho)-\gamma) \nabla\rho-u\cdot\nabla u+i(\rho) {\rm div}\tau\right) dx ~~~ \\ \notag
		&=I_1+I_2~.
	\end{align}
	Integrating by part, we obtain
	\begin{align}\label{1ineq16}
		I_1=-\int_{\mathbb{R}^{d}}{\rm div}u\rho_t dx
		&\lesssim \|\nabla u\|^2_{L^2}(1+\|\rho\|_{L^\infty})+\|\nabla u\|_{L^2}\|\nabla \rho\|_{L^2}\|u\|_{L^\infty}~~~~~~~~\\ \notag
		&\lesssim  \|u\|_{L^\infty}\|\nabla\rho\|^2_{L^2} + (1+\|(\rho,u)\|_{L^\infty})\|\nabla u\|^2_{L^2}~.
	\end{align}
	Applying Lemma \ref{Lemma5} leads to
	\begin{align}\label{1ineq17}
		~~I_2&\lesssim \|\nabla \rho\|_{L^2}\left(\|\nabla^2 u\|_{L^2}+\|\rho\|_{L^\infty}\|\nabla \rho\|_{L^2}+\|u\|_{L^\infty}\|\nabla u\|_{L^2}+\|\nabla g\|_{L^2(\mathcal{L}^{2})}\right) \\ \notag
		&\lesssim \left(\delta+\|(\rho,u)\|_{L^\infty}\right)\left(\|\nabla\rho\|^2_{L^2}+\|\nabla u\|^2_{L^2}\right) + C_{\delta}\left(\|\nabla^2 u\|^2_{L^2}+\|\nabla_q\nabla g\|^2_{L^2(\mathcal{L}^{2})}\right)~.~~~~
	\end{align}
	where positive constant $\delta$ is small enough. According \eqref{1ineq16} and \eqref{1ineq17} , we infer that
	\begin{align}\label{1ineq18}
		\frac {d} {dt} \int_{\mathbb{R}^{d}} u\nabla\rho dx+\gamma\|\nabla\rho\|^2_{L^2} &\lesssim  C_{\delta}\left(\|\nabla^2 u\|^2_{L^2}+\|\nabla g\|^2_{L^2(\mathcal{L}^{2})}\right)\\ \notag
		&~~~~+ \left(\delta + \|(\rho,u)\|_{L^\infty}\right)\|\nabla\rho\|^2_{L^2}  + \left(1+\|(\rho,u)\|_{L^\infty}\right)\|\nabla u\|^2_{L^2}.
	\end{align}
According to \eqref{1ineq14} and \eqref{1ineq18} , we obtain
	\begin{align}\label{1ineq19}
		\frac {d} {dt} &\left(\|h(\rho)^{\frac 1 2}\rho \|^2_{L^2}+\|(1+\rho)^{\frac 1 2}u\|^2_{L^2}+\|g\|^2_{L^2(\mathcal{L}^{2})}+2\eta\int_{\mathbb{R}^{d}} u\nabla\rho dx\right)  \\ \notag
		&~~~~+2\left(\mu\|\nabla u\|^2_{L^2}+(\mu+\mu')\|{\rm div}u\|^2_{L^2}+
		\eta\gamma\|\nabla\rho\|^2_{L^2}+\|\nabla_q g\|^2_{L^2(\mathcal{L}^{2})}\right)  \\ \notag
		&\lesssim (1+\|\rho\|_{L^\infty})\|(\rho,u)\|_{L^d}(\|\nabla\rho\|^2_{L^2}+\|\nabla u\|^2_{L^2}) + \|\nabla u\|_{L^\infty}\|\nabla_q g\|^2_{L^2(\mathcal{L}^{2})} \\ \notag
		& ~~~~ + \eta(\delta+\|(\rho,u)\|_{L^\infty})\|\nabla\rho\|^2_{L^2}+\eta(1+\|(\rho,u)\|_{L^\infty})\|\nabla u\|^2_{L^2}  \\ \notag
		&~~~~+ \eta C_{\delta}\left(\|\nabla^2 u\|_{L^2}+\|\nabla g\|_{L^2(\mathcal{L}^{2})}\right) \\ \notag
		&\lesssim \left(\epsilon^{\frac{1}{2}} + \epsilon + \delta + \eta + \eta C_{\delta} \right)\left(\|\nabla \rho\|^2_{H^{s-1}} + \|\nabla u\|^2_{H^{s}} + \|\nabla_q g\|^2_{H^{s}(\mathcal{L}^{2})}\right)~.~~~~~~~~~~
	\end{align}
	where positive constant $\eta$ is small enough.	We thus complete the proof of Lemma \ref{low} .
\end{proof}
\subsection{Estimates on high order derivatives}
\begin{lemm}\label{high}
Let $(\rho_,u,g)$ be local classical solutions considered in Proposition \ref{pro2} . Then there exist positive constants $\eta$ and $\delta$ such that
\begin{align}\label{high estimate}
	\frac {d} {dt}& \left(\|h(\rho)^{\frac 1 2}\Lambda^s\rho \|^2_{L^2}+\|(1+\rho)^{\frac 1 2}\Lambda^su\|^2_{L^2}+\|\Lambda^sg\|^2_{L^2(\mathcal{L}^{2})}+2\eta\int_{\mathbb{R}^{d}} \Lambda^{s-1} u\nabla\Lambda^{s-1} \rho dx\right)  \\ \notag
	&~~~~+2\left(\mu\|\nabla\Lambda^{s} u\|^2_{L^2}+(\mu+\mu')\|{\rm div}\Lambda^su\|^2_{L^2}+
	\eta\gamma\|\nabla\Lambda^{s-1}\rho\|^2_{L^2}+\|\nabla_q \Lambda^sg\|^2_{L^2(\mathcal{L}^{2})}\right)  \\ \notag
	&\lesssim  \left(\epsilon^{\frac{1}{2}}+ \left(\frac{\epsilon}{\lambda}\right)^{\frac{1}{2}}  + \epsilon + \delta + \eta + \eta C_{\delta} \right)\left(\|\nabla \rho\|^2_{H^{s-1}} + \|\nabla u\|^2_{H^{s}} + \|\nabla_q g\|^2_{H^{s}(\mathcal{L}^{2})}\right).
\end{align}
\end{lemm}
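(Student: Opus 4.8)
The plan is to carry out at the level of the top-order operator $\Lambda^{s}$ the same four energy identities that underlie Lemma \ref{low}, the only genuinely new ingredients being the commutator estimates of Lemma \ref{Lemma6} and the weighted microscopic bounds of Lemma \ref{Lemma4}. Concretely, I would apply $\Lambda^{s}$ to each of $\eqref{eq1}_{1}$, $\eqref{eq1}_{2}$, $\eqref{eq1}_{3}$ and then: (i) take the $L^{2}(\mathcal{L}^{2})$ inner product of $\Lambda^{s}\eqref{eq1}_{3}$ with $\Lambda^{s}g$, producing $\tfrac12\tfrac{d}{dt}\|\Lambda^{s}g\|_{L^{2}(\mathcal{L}^{2})}^{2}+\|\nabla_{q}\Lambda^{s}g\|_{L^{2}(\mathcal{L}^{2})}^{2}$ (here $\mathcal{L}$ commutes with $\Lambda^{s}$); (ii) take the $L^{2}$ inner product of $\Lambda^{s}\eqref{eq1}_{1}$ with $h(\rho)\Lambda^{s}\rho$; (iii) take the $L^{2}$ inner product of $\Lambda^{s}\eqref{eq1}_{2}$ with $(1+\rho)\Lambda^{s}u$, producing the viscous dissipation $2\mu\|\nabla\Lambda^{s}u\|_{L^{2}}^{2}+2(\mu+\mu')\|{\rm div}\,\Lambda^{s}u\|_{L^{2}}^{2}$; and (iv) take the $L^{2}$ inner product of $\Lambda^{s-1}\eqref{eq1}_{2}$ with $\nabla\Lambda^{s-1}\rho$, which, after substituting $\rho_{t}=-(1+\rho){\rm div}\,u-u\cdot\nabla\rho$ and integrating by parts, supplies the missing density dissipation $\gamma\|\nabla\Lambda^{s-1}\rho\|_{L^{2}}^{2}$ (to be weighted by the small parameter $\eta$ in the combination) together with a $C_{\delta}$-multiple of $\|\Lambda^{s+1}u\|_{L^{2}}^{2}+\|\nabla_{q}\Lambda^{s-1}g\|_{L^{2}(\mathcal{L}^{2})}^{2}$ and remainders controlled by $\|\nabla u\|_{H^{s-1}}^{2}$, all of which re-enter the right side of \eqref{high estimate} carrying the small prefactors $\eta$ and $\eta C_{\delta}$.

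Two structural cancellations should be recorded immediately. Exactly as in \eqref{1ineq13}, the two occurrences of $\int_{\mathbb{R}^{d}}\nabla\Lambda^{s}u:\tau(\Lambda^{s}g)\,dx$, one coming from ${\rm div}\,\tau$ in $\eqref{eq1}_{2}$ and the other from the source term $\nabla u\,q\,\nabla_{q}\mathcal{U}$ in $\eqref{eq1}_{3}$, cancel identically, because $\Lambda^{s}$ acts only in $x$ and hence commutes with $q$ and with $\nabla_{q}\mathcal{U}$; and $\langle\Lambda^{s}{\rm div}\,u,\Lambda^{s}g\rangle=0$ since $\int_{\mathbb{R}^{d}}\Lambda^{s}g\,\psi_{\infty}\,dq=\Lambda^{s}\!\int_{\mathbb{R}^{d}}g\,\psi_{\infty}\,dq=0$ by \eqref{1ineq2}. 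The remaining quadratic terms are all of the same type as in Lemma \ref{low} modulo a commutator: the transport commutators $\langle[\Lambda^{s},u\cdot\nabla]g,\Lambda^{s}g\rangle$, $\int_{\mathbb{R}^{d}}[\Lambda^{s},u\cdot\nabla]\rho\,h(\rho)\Lambda^{s}\rho\,dx$, $\int_{\mathbb{R}^{d}}[\Lambda^{s},u\cdot\nabla]u\cdot(1+\rho)\Lambda^{s}u\,dx$; the coefficient commutators $[\Lambda^{s},h(\rho)]$, $[\Lambda^{s},i(\rho)]{\rm div}\,\Sigma(u)$, $[\Lambda^{s},\tfrac{P'(1+\rho)}{1+\rho}]\nabla\rho$, $[\Lambda^{s},i(\rho)]{\rm div}\,\tau$; and the time-derivative terms $\tfrac12\int_{\mathbb{R}^{d}}\partial_{t}h(\rho)\,|\Lambda^{s}\rho|^{2}\,dx$, $\tfrac12\int_{\mathbb{R}^{d}}\partial_{t}\rho\,|\Lambda^{s}u|^{2}\,dx$. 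Each is bounded with Lemma \ref{Lemma6} followed by the Gagliardo--Nirenberg inequality (Lemma \ref{Lemma3}), and for the $\tau$-dependent ones by Lemma \ref{Lemma5} and Lemma \ref{Lemma4}; meanwhile the two pressure terms $\int_{\mathbb{R}^{d}}P'(1+\rho)\Lambda^{s}\rho\,{\rm div}\,\Lambda^{s}u\,dx$ and $\int_{\mathbb{R}^{d}}P'(1+\rho)\Lambda^{s}u\cdot\nabla\Lambda^{s}\rho\,dx$ cancel up to a cubic remainder just as in \eqref{1ineq12}. Using $H^{s-1}\hookrightarrow L^{\infty}$ (legitimate because $s>1+\tfrac d2$) together with $\sup_{t\in[0,T)}E_{\lambda}(t)\le\epsilon$, all these contributions are $\lesssim(\epsilon^{1/2}+\epsilon+\delta)\big(\|\nabla\rho\|_{H^{s-1}}^{2}+\|\nabla u\|_{H^{s}}^{2}+\|\nabla_{q}g\|_{H^{s}(\mathcal{L}^{2})}^{2}\big)$.

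The step I expect to be the main obstacle, and the reason for the extra factor $(\epsilon/\lambda)^{1/2}$ in \eqref{high estimate}, is the top-order microscopic term $\big\langle\Lambda^{s}\big(\tfrac{1}{\psi_{\infty}}\nabla_{q}\!\cdot(\nabla u\,q\,g\,\psi_{\infty})\big),\Lambda^{s}g\big\rangle$. Integrating by parts in $q$ turns it into $-\iint\Lambda^{s}(\nabla u\,q\,g)\cdot\nabla_{q}\Lambda^{s}g\,\psi_{\infty}\,dq\,dx$, and after a Leibniz expansion in $x$ the two dangerous pieces are $\nabla u\,q\,\Lambda^{s}g$ and $\Lambda^{s}\nabla u\,q\,g$ paired against $\nabla_{q}\Lambda^{s}g$. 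For the first, Lemma \ref{Lemma4} gives $\|q\Lambda^{s}g\|_{L^{2}(\mathcal{L}^{2})}=\|qg\|_{\dot{H}^{s}(\mathcal{L}^{2})}\lesssim\|\nabla_{q}\Lambda^{s}g\|_{L^{2}(\mathcal{L}^{2})}$, so this piece is $\lesssim\|\nabla u\|_{L^{\infty}}\|\nabla_{q}\Lambda^{s}g\|_{L^{2}(\mathcal{L}^{2})}^{2}$. For the second, Hölder in $x$ together with $|q|\le\langle q\rangle$, the Sobolev embedding $H^{s-1}\hookrightarrow L^{\infty}$ (again using $s-1>\tfrac d2$) and the definition of $E_{\lambda}$ yield $\|qg\|_{L^{\infty}_{x}(\mathcal{L}^{2})}\lesssim\|\langle q\rangle g\|_{H^{s-1}(\mathcal{L}^{2})}\lesssim(E_{\lambda}/\lambda)^{1/2}\le(\epsilon/\lambda)^{1/2}$, whence this piece is $\lesssim(\epsilon/\lambda)^{1/2}\|\nabla\Lambda^{s}u\|_{L^{2}}\|\nabla_{q}\Lambda^{s}g\|_{L^{2}(\mathcal{L}^{2})}$ and is absorbed by Young's inequality; the intermediate Leibniz terms are interpolated away with Lemma \ref{Lemma3}. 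The microscopic term appearing in (iv) and the remaining first-order $q$-terms in (i) are handled analogously, using Lemma \ref{Lemma5} to trade each power of $\langle q\rangle$ for one $\nabla_{q}$. Finally, adding the four identities, choosing first $\delta$ and then $\eta$ sufficiently small, and invoking $\sup_{t\in[0,T)}E_{\lambda}(t)\le\epsilon$ once more yields precisely \eqref{high estimate}.
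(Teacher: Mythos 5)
Your proposal follows the paper's proof essentially line by line: the same four pairings (with $h(\rho)\Lambda^s\rho$, $(1+\rho)\Lambda^s u$, $\Lambda^s g$, and the cross term $\langle\Lambda^{s-1}u,\nabla\Lambda^{s-1}\rho\rangle$), the same two structural cancellations, and you correctly locate the source of the $(\epsilon/\lambda)^{1/2}$ factor in the top-order microscopic piece $\Lambda^s\nabla u\,q\,g$ paired against $\nabla_q\Lambda^s g$, bounded via $\|qg\|_{L^\infty_x(\mathcal{L}^2)}\lesssim\|\langle q\rangle g\|_{H^{s-1}(\mathcal{L}^2)}$ exactly as in the paper's estimate \eqref{2ineq5}. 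The only quibble is cosmetic: in step (iv) the $\tau$-term actually produces $C_\delta\|\nabla\nabla_q g\|^2_{H^{s-1}(\mathcal{L}^2)}$ (i.e.\ top order $\|\nabla_q\Lambda^s g\|_{L^2(\mathcal{L}^2)}$) rather than $\|\nabla_q\Lambda^{s-1}g\|^2_{L^2(\mathcal{L}^2)}$, but since this is still controlled by $\|\nabla_q g\|^2_{H^s(\mathcal{L}^2)}$ the conclusion is unaffected.
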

\begin{proof}
Applying $\Lambda^s$ to system $\eqref{eq1}_3$ , we infer that
\begin{align}\label{2ineq1}
	\partial_t\Lambda^s g+\mathcal{L}\Lambda^s g&+{\rm div}\Lambda^{s}u+\nabla\Lambda^{s}uq\nabla_q \mathcal{U} \\ \notag
	&=-u\cdot\nabla\Lambda^s g-[\Lambda^s,u]\nabla g \\ \notag
	&~~~~-\frac 1 {\psi_\infty}\nabla_q \cdot\left(\Lambda^s\nabla uqg\psi_\infty+q\psi_\infty[\Lambda^s,g]\nabla u\right)~.
\end{align}
Taking $L^2(\mathcal{L}^{2})$ inner product with $\Lambda^s g $ to \eqref{2ineq1} , we get
\begin{align}\label{2ineq2}
	\frac {1} {2}\frac {d} {dt} \|\Lambda^s g\|^2_{L^2(\mathcal{L}^{2})}&+\|\nabla_q  \Lambda^s g\|^2_{L^2(\mathcal{L}^{2})}-\int_{\mathbb{R}^{d}}\nabla\Lambda^{s}u:\Lambda^{s}\tau dx \\ \notag
	&= -\langle u\cdot\nabla \Lambda^s g,\Lambda^s g\rangle  -\langle[\Lambda^s,u]\nabla g,\Lambda^s g\rangle \\ \notag
	&~~~~-\langle\frac 1 {\psi_\infty} \nabla_q\cdot(\Lambda^s\nabla uqg\psi_\infty),\Lambda^sg\rangle\\ \notag
	&~~~~-\langle\frac 1 {\psi_\infty} \nabla_q\cdot(q\psi_\infty[\Lambda^s,g]\nabla u),\Lambda^s g\rangle,~~~~~~~~~~
\end{align}
where $\langle{\rm div}\Lambda^su,\Lambda^s g\rangle = 0$ has been used.
Applying Lemmas \ref{Lemma5} and \ref{Lemma6} lead to
\begin{align}\label{2ineq3}
	-\langle[\Lambda^s,u]\nabla g,\Lambda^s g\rangle \lesssim \|u\|_{H^s}\|\nabla_q\nabla g\|^2_{H^{s-1}(\mathcal{L}^{2})}~,~~~~~~~~~~~~~~~~~~~~~
\end{align}
and
\begin{align}\label{2ineq4}
	-\langle u\cdot\nabla \Lambda^s g,\Lambda^s g\rangle=\frac 1 2 \langle {\rm div}u, (\Lambda^s g)^2\rangle
	&\lesssim \|\nabla u\|_{L^\infty}\|\Lambda^s g\|^2_{L^2(\mathcal{L}^{2})} \\ \notag
	&\lesssim \|u\|_{H^s}\|\nabla_q\Lambda^s g\|^2_{L^2(\mathcal{L}^{2})}~.
\end{align}
Analogously,
\begin{align}\label{2ineq5}
	\langle\frac 1 {\psi_\infty} \nabla_q\cdot(\Lambda^s\nabla uqg\psi_\infty), \Lambda^sg \rangle
	&=-\int_{\mathbb{R}^{d}}\int_{\mathbb{R}^{d}}(\Lambda^s\nabla uq\psi_\infty g)\nabla_q \Lambda^s g dqdx \\ \notag
	&\lesssim \|\nabla \Lambda^s u\|_{L^2}\|\langle q\rangle g\|_{H^{s-1}(\mathcal{L}^{2})}\|\nabla_q\Lambda^s g\|_{L^2(\mathcal{L}^{2})} \\ \notag
	&\lesssim   \|\langle q\rangle g\|_{H^{s-1}(\mathcal{L}^{2})}\left(\|\nabla \Lambda^s u\|^2_{L^2}+\|\nabla_q\Lambda^s g\|^2_{L^2(\mathcal{L}^{2})}\right)~,
\end{align}
and
\begin{align}\label{2ineq6}
	-\langle\frac 1 {\psi_\infty} \nabla_q\cdot(q\psi_\infty[\Lambda^s,g]\nabla u),\Lambda^s g\rangle&=\langle [\Lambda^s,q g]\nabla u,\nabla_q\Lambda^s g\rangle\\ \notag
	&\lesssim \|\nabla_q \Lambda^s g\|_{L^2(\mathcal{L}^{2})}\|u\|_{H^s}\|\nabla\nabla_q g\|_{H^{s-1}(\mathcal{L}^{2})}~~~~~~~~~~~~~~~~~~\\ \notag
	&\lesssim \|u\|_{H^s}\|\nabla_q g\|^2_{H^s(\mathcal{L}^{2})}~.
\end{align}
Combining the estimates \eqref{2ineq3}-\eqref{2ineq6} , we obtain
\begin{align}\label{2ineq7}
	\frac {1} {2}\frac {d} {dt} \|\Lambda^s g\|^2_{L^2(\mathcal{L}^{2})}&+\|\nabla_q \Lambda^s g\|^2_{L^2(\mathcal{L}^{2})} -\int_{\mathbb{R}^{d}}\nabla\Lambda^{s}u:\Lambda^{s}\tau dx \\ \notag
	&\lesssim \left(\|u\|_{H^s}+\|\langle q\rangle g\|_{H^{s-1}(\mathcal{L}^{2})}\right)\left(\|\nabla \Lambda^s u\|^2_{L^2}+\|\nabla_q g\|^2_{H^s(\mathcal{L}^{2})}\right) \\ \notag
	&\lesssim \epsilon^{\frac{1}{2}}\left( 1+\lambda^{-\frac{1}{2}} \right)\left(\|\nabla \Lambda^s u\|^2_{L^2}+\|\nabla_q g\|^2_{H^s(\mathcal{L}^{2})}\right).
\end{align}
Applying $\Lambda^s$ to system $\eqref{eq1}_1$ , one can get
\begin{align}\label{2ineq8}
	\partial_t\Lambda^s \rho+{\rm div} \Lambda^s u(1+\rho)
	=-u\cdot\nabla\Lambda^s \rho-[\Lambda^s,u]\nabla\rho-[\Lambda^s,\rho]{\rm div}u.~~~~~~~~~~~~~~~~
\end{align}
Taking $L^2$ inner product with $h(\rho) \Lambda^s \rho$ to \eqref{2ineq8} , we have
\begin{align}\label{2ineq9}
	\frac 1 2 &\frac {d} {dt} \int_{\mathbb{R}^{d}}h(\rho)|\Lambda^s \rho|^2 dx+\int_{\mathbb{R}^{d}}P'(1+\rho)\Lambda^s \rho {\rm div}\Lambda^s u dx \\ \notag &=\frac 1 2 \int_{\mathbb{R}^{d}} \partial_t h(\rho) |\Lambda^s \rho|^2 dx -\int_{\mathbb{R}^{d}}\Lambda^s \rho \cdot h(\rho)  u\cdot\nabla \Lambda^s \rho dx \\ \notag
	& ~~~~-\int_{\mathbb{R}^{d}}[\Lambda^s,u]\nabla\rho\cdot h(\rho) \Lambda^s\rho dx-\int_{\mathbb{R}^{d}}[\Lambda^s,(1+\rho)]{\rm div}u\cdot h(\rho) \Lambda^s \rho dx~.~~~~~~~~
\end{align}
Applying $\Lambda^m$ to system $\eqref{eq1}_2$ , we infer that
\begin{align}\label{2ineq10}
	\partial_t\Lambda^m u+h(\rho)\nabla\Lambda^m\rho&-i(\rho) {\rm div}\Lambda^m \Sigma{(u)}-i(\rho) {\rm div}\Lambda^m \tau  \\ \notag
	&=-u\cdot\nabla\Lambda^m u-[\Lambda^m,u]\nabla u-[\Lambda^m,h(\rho) -\gamma]\nabla\rho\\ \notag &~~~~+[\Lambda^m,i(\rho)-1]{\rm div}\Sigma{(u)}+[\Lambda^m,i(\rho)-1]{\rm div}\tau.~~~~~~~~~
\end{align}
Taking $L^2$ inner product with $(1+\rho) \Lambda^s u$ to \eqref{2ineq10} with $m=s$ , we obtain
\begin{align}\label{2ineq11}
	~~~\frac 1 2 	&\frac {d} {dt} \|(1+\rho)^{\frac 1 2}\Lambda^s u\|^2_{L^2}
	+\mu\|\nabla\Lambda^s u\|^2_{L^2}+(\mu+\mu')\|{\rm div}\Lambda^s u\|^2_{L^2}  \\ \notag
	&=-\int_{\mathbb{R}^{d}}P'(1+\rho)\nabla\Lambda^s \rho \Lambda^s u dx+\int_{\mathbb{R}^{d}}{\rm div}\Lambda^s \tau \Lambda^s u dx \\ \notag
	&~~~~+\frac 1 2 \int_{\mathbb{R}^{d}} \partial_t\rho |\Lambda^s u|^2 dx
	-\int_{\mathbb{R}^{d}}\Lambda^s u \cdot(1+\rho)u\cdot\nabla \Lambda^s u dx \\ \notag
	&~~~~-\int_{\mathbb{R}^{d}}[\Lambda^s,u]\nabla u (1+\rho)\Lambda^s u dx
	-\int_{\mathbb{R}^{d}}[\Lambda^s,h(\rho)-\gamma]\nabla\rho (1+\rho)\Lambda^s u dx \\ \notag
	&~~~~+\int_{\mathbb{R}^{d}}[\Lambda^s,i(\rho)-1]{\rm div}\Sigma{(u)} (1+\rho)\Lambda^s u dx
	+\int_{\mathbb{R}^{d}}[\Lambda^s,i(\rho)-1]{\rm div}\tau (1+\rho)\Lambda^s u dx~.
\end{align}
Integrating by parts leads to
\begin{align}\label{2ineq12}
	\frac 1 2
	\int_{\mathbb{R}^{d}} \partial_t h(\rho) |\Lambda^s \rho|^2 dx
	\lesssim \|(\rho,u)\|_{H^s}\|\Lambda^s \rho\|^2_{L^2}~,
\end{align}
and
\begin{align}\label{2ineq13}
	-\int_{\mathbb{R}^{d}}\Lambda^s \rho \cdot h(\rho)  u\cdot\nabla \Lambda^s \rho dx
	&=\frac 1 2 \int_{\mathbb{R}^{d}} {\rm div}(h(\rho)u)|\Lambda^s \rho|^2 dx\\ \notag
	&\lesssim \|(\rho,u)\|_{H^s}\|\Lambda^s \rho\|^2_{L^2}~.
\end{align}
It follows from Lemma \ref{Lemma6} that
\begin{align}\label{2ineq14}
	-\int_{\mathbb{R}^{d}}[\Lambda^s,u]\nabla\rho\cdot h(\rho) \Lambda^s\rho dx&-\int_{\mathbb{R}^{d}}[\Lambda^s,\rho]{\rm div}u\cdot h(\rho) \Lambda^s \rho dx  \\ \notag
	&\lesssim (\|\rho\|_{H^s}\|\Lambda^{s-1}\nabla u\|_{L^2}+\|\Lambda^{s-1}\nabla \rho\|_{L^2}\|u\|_{H^s})\|\Lambda^s \rho\|_{L^2}~~~ \\ \notag
	&\lesssim \left(\|\rho\|_{H^s}+\|u\|_{H^s}\right)\left(\|\Lambda^s \rho\|^2_{L^2}+\|\Lambda^{s} u\|^2_{L^2}\right)~,
\end{align}
and
\begin{align}\label{2ineq15}
	\frac 1 2 \int_{\mathbb{R}^{d}} \partial_t\rho |\Lambda^s u|^2 dx
	&-\int_{\mathbb{R}^{d}}\Lambda^s u \cdot(1+\rho)u\cdot\nabla \Lambda^s u dx
	-\int_{\mathbb{R}^{d}}[\Lambda^s,u]\nabla u (1+\rho)\Lambda^s u dx  \\ \notag
	&\lesssim \|u\|_{H^s}\|\Lambda^s u\|_{L^2}\left(\|\Lambda^s u\|_{L^2}+\|\nabla\Lambda^s u\|_{L^2}+\|\nabla\Lambda^{s-1} u\|_{L^2}\right) \\ \notag
	&\lesssim \|u\|_{H^s}\left(\|\Lambda^s u\|^2_{L^2}+\|\nabla\Lambda^s u\|^2_{L^2}\right)~.
\end{align}
Similarly, we have
\begin{align}\label{2ineq16}
	-\int_{\mathbb{R}^{d}}[\Lambda^s,h(\rho)-\gamma]\nabla\rho (1+\rho)\Lambda^s u dx
	&+\int_{\mathbb{R}^{d}}[\Lambda^s,i(\rho)-1]{\rm div}\Sigma{(u)} (1+\rho)\Lambda^s u dx  \\ \notag
	&\lesssim\|\rho\|_{H^s}\|\Lambda^s u\|_{L^2}\left(\|\nabla\Lambda^{s-1} \rho\|_{L^2}+\|\nabla^2 u\|_{H^{s-1}}\right) \\ \notag
	&\lesssim\|\rho\|_{H^s}\left(\|\nabla\Lambda^{s-1} \rho\|^2_{L^2}+\|\nabla u\|^2_{H^{s}}\right)~,
\end{align}
and
\begin{align}\label{2ineq17}
	\int_{\mathbb{R}^{d}}[\Lambda^s,i(\rho)-1]{\rm div}\tau (1+\rho)\Lambda^s u dx&\lesssim \|\rho\|_{H^s}\|\Lambda^s u\|_{L^2}\|\nabla \nabla_q g\|_{H^{s-1}(\mathcal{L}^{2})} \\ \notag
	& \lesssim \|\rho\|_{H^s}\left(\|\Lambda^s u\|^2_{L^2} + \|\nabla_q g\|^2_{H^{s}(\mathcal{L}^{2})}\right)~.~~
\end{align}
Integrating by parts leads to
\begin{align}\label{2ineq18}
	-\int_{\mathbb{R}^{d}}P'(1+\rho)(\Lambda^s u\nabla\Lambda^s\rho+\Lambda^s\rho {\rm div}\Lambda^su)dx
	&=\int_{\mathbb{R}^{d}}P''(1+\rho)\Lambda^s\rho \Lambda^s u\nabla\rho dx \\ \notag
	&\lesssim \|\rho\|_{H^s}\|\Lambda^s u\|_{L^2}\|\Lambda^s \rho\|_{L^2} \\ \notag
	&\lesssim \|\rho\|_{H^s}\left(\|\Lambda^s \rho\|^2_{L^2} + \|\Lambda^s u\|^2_{L^2}\right)~.~~~
\end{align}
Combining the estimates \eqref{2ineq12}-\eqref{2ineq18} , we deduce that
\begin{align}\label{2ineq19}
	\frac {d} {dt}& \left(\|h(\rho)^{\frac 1 2}\Lambda^s\rho \|^2_{L^2}+\|(1+\rho)^{\frac 1 2}\Lambda^s u\|^2_{L^2}+\|\Lambda^s g\|^2_{L^2(\mathcal{L}^{2})}\right)  \\ \notag
	&~~~~+2\left(\mu\|\nabla \Lambda^s u\|^2_{L^2}+(\mu+\mu')\|{\rm div}\Lambda^s u\|^2_{L^2}+\|\nabla_q \Lambda^s g\|^2_{L^2(\mathcal{L}^{2})}\right)  ~~~~~~~~~~~~\\ \notag
	&\lesssim \|(\rho,u)\|_{H^s}\left(\|\Lambda^s \rho\|^2_{L^2} + \|\nabla u\|^2_{H^{s}} + \|\nabla_q g\|^2_{H^{s}(\mathcal{L}^{2})}\right)~.
\end{align}
Taking $L^2$ inner product with $\nabla\Lambda^{s-1} \rho$ to \eqref{2ineq10} with $m=s-1$ , we get
\begin{align}\label{2ineq20}
	\frac {d} {dt} &\int_{\mathbb{R}^{d}}\Lambda^{s-1} u \cdot\nabla\Lambda^{s-1} \rho dx
	+\gamma\|\nabla\Lambda^{s-1}\rho\|^2_{L^{2}}\\ \notag
	&=-\int_{\mathbb{R}^{d}} \Lambda^{s-1} \rho_t {\rm div}\Lambda^{s-1} u dx
	-\int_{\mathbb{R}^{d}}\nabla\Lambda^{s-1} \rho\cdot u\cdot\nabla \Lambda^{s-1} u dx\\ \notag
	&~~~~-\int_{\mathbb{R}^{d}}[\Lambda^{s-1},u]\nabla u \nabla\Lambda^{s-1} \rho dx
	-\int_{\mathbb{R}^{d}}\Lambda^{s-1}((h(\rho)-\gamma)\nabla\rho) \nabla\Lambda^{s-1} \rho dx\\ \notag
	&~~~~+\int_{\mathbb{R}^{d}}\Lambda^{s-1}(i(\rho){\rm div}\Sigma{(u)}) \nabla\Lambda^{s-1} \rho dx
	+\int_{\mathbb{R}^{d}}\Lambda^{s-1}(i(\rho){\rm div}\tau) \nabla\Lambda^{s-1} \rho dx~.
\end{align}
Lemmas \ref{Lemma5} enable us to obtain
\begin{align}\label{2ineq21}
	-\int_{\mathbb{R}^{d}} \Lambda^{s-1} \rho_t {\rm div}\Lambda^{s-1} u dx	&-\int_{\mathbb{R}^{d}}\nabla\Lambda^{s-1} \rho\cdot u\cdot\nabla \Lambda^{s-1} u dx \\ \notag
	&\lesssim \|u\|_{H^s}\left(\|\nabla\rho\|^2_{H^{s-1}}+\|\nabla u\|^2_{H^{s-1}}\right)~,~~~~~~~~~~~~~
\end{align}
and
\begin{align}\label{2ineq22}
	\int_{\mathbb{R}^{d}}\Lambda^{s-1}(i(\rho){\rm div}\tau) \nabla\Lambda^{s-1} \rho dx&\lesssim \|\nabla\Lambda^{s-1} \rho\|_{L^2}\|\nabla \nabla_q g\|_{H^{s-1}(\mathcal{L}^{2})}\left(\|\rho\|_{H^{s-1}}+1\right)\\ \notag
	&\lesssim \left(\|\rho\|_{H^{s-1}}+1\right)\left(\delta\|\nabla\Lambda^{s-1} \rho\|^2_{L^2} + C_{\delta}\|\nabla \nabla_q g\|^2_{H^{s-1}(\mathcal{L}^{2})}\right)~,
\end{align}
with sufficiently small positive constant $\delta$. Similarly, we have
\begin{align}\label{2ineq23}
	-\int_{\mathbb{R}^{d}}\Lambda^{s-1}&[(h(\rho)-\gamma)\nabla\rho] \nabla\Lambda^{s-1} \rho dx +\int_{\mathbb{R}^{d}}\Lambda^{s-1}[i(\rho){\rm div}\Sigma{(u)}] \nabla\Lambda^{s-1} \rho dx\\ \notag
	&\lesssim \|\nabla\Lambda^{s-1} \rho\|_{L^2}(\|\rho\|_{H^{s}}\|\nabla\Lambda^{s-1} \rho\|_{L^2}+\|\nabla^2 u\|_{H^{s-1}}+\|\nabla^2 u\|_{H^{s-1}}\|\rho\|_{H^{s-1}}) ~~~~~~\\ \notag
	&\lesssim \left(\delta + \|\rho\|_{H^s}\right)\|\nabla\Lambda^{s-1} \rho\|^2_{L^2} + \left(C_{\delta} + \|\rho\|_{H^s}\right)\|\nabla^2 u\|^2_{H^{s-1}}~.
\end{align}
Applying Lemmas \ref{Lemma3} and \ref{Lemma6} lead to
\begin{align}\label{2ineq24}
	-\int_{\mathbb{R}^{d}}[\Lambda^{s-1},u]\nabla u \nabla\Lambda^{s-1} \rho dx&\lesssim \|\nabla\Lambda^{s-1} \rho\|_{L^2}\|\nabla u\|_{L^{\infty}}\|\nabla\Lambda^{s-2} u\|_{L^2}\\ \notag
	&\lesssim \|\nabla\Lambda^{s-1} \rho\|_{L^2}\|u\|^{1-\frac 1 s-\frac d {2s}}_{L^{2}}\|\nabla\Lambda^{s-1} u\|^{\frac 1 s+\frac d {2s}}_{L^2}\|u\|^{\frac 1 s}_{L^{2}}\|\nabla\Lambda^{s-1} u\|^{1-\frac 1 s}_{L^2}\\ \notag
	&\lesssim \|u\|_{H^{s}}\left(\|\nabla\Lambda^{s-1} \rho\|^2_{L^2} + \|\nabla\Lambda^{s-1} u\|^2_{L^2}\right)~.
\end{align}
Combining the estimates \eqref{2ineq21}-\eqref{2ineq24} , we deduce that
\begin{align}\label{2ineq25}
	\frac {d} {dt} \int_{\mathbb{R}^{d}}\Lambda^{s-1} u \cdot\nabla\Lambda^{s-1} \rho& dx
	+\gamma\|\nabla\Lambda^{s-1}\rho\|^2_{L^{2}} \\ \notag
	&\lesssim  \|(\rho,u)\|_{H^s}\left(\|\nabla\rho\|^2_{H^{s-1}}  + \|\nabla u\|^2_{H^{s}} + \|\nabla \nabla_q g\|^2_{H^{s-1}(\mathcal{L}^{2})}\right)~ \\ \notag
	&~~~~+ \delta\|\nabla\rho\|^2_{H^{s-1}} +  C_{\delta}\left(\|\nabla u\|^2_{H^{s}} + \|\nabla \nabla_q g\|^2_{H^{s-1}(\mathcal{L}^{2})}\right)~.
\end{align}
 According to \eqref{2ineq19} and \eqref{2ineq25} , we infer that
\begin{align}\label{2ineq26}
	\frac {d} {dt} &(\|h(\rho)^{\frac 1 2}\Lambda^s\rho \|^2_{L^2}+\|(1+\rho)^{\frac 1 2}\Lambda^s u\|^2_{L^2}+\|\Lambda^s g\|^2_{L^2(\mathcal{L}^{2})}+2\eta\int_{\mathbb{R}^{d}} \Lambda^{s-1} u\nabla\Lambda^{s-1} \rho dx)  \\ \notag
	&~~~+2(\mu\|\nabla \Lambda^s u\|^2_{L^2}+(\mu+\mu')\|{\rm div}\Lambda^s u\|^2_{L^2}+
	\eta\gamma\|\nabla\Lambda^{s-1}\rho\|^2_{L^2}+\|\nabla_q \Lambda^s g\|^2_{L^2(\mathcal{L}^{2})})  \\ \notag
	&\lesssim \|(\rho,u)\|_{H^s}\left(\|\Lambda^s \rho\|^2_{L^2} + \|\nabla u\|^2_{H^{s}} + \|\nabla_q g\|^2_{H^{s}(\mathcal{L}^{2})}\right) \\ \notag
	& ~~~~+ \eta\|(\rho,u)\|_{H^s}\left(\|\nabla\rho\|^2_{H^{s-1}}  + \|\nabla u\|^2_{H^{s}} + \|\nabla \nabla_q g\|^2_{H^{s-1}(\mathcal{L}^{2})}\right) \\ \notag
	&~~~~+ \eta\delta\|\nabla\rho\|^2_{H^{s-1}} +  \eta C_{\delta}\left(\|\nabla u\|^2_{H^{s}} + \|\nabla \nabla_q g\|^2_{H^{s-1}(\mathcal{L}^{2})}\right) \\ \notag
	&\lesssim  \left(\epsilon^{\frac{1}{2}}+\left(\frac{\epsilon}{\lambda}\right)^{\frac{1}{2}} + \epsilon + \delta + \eta + \eta C_{\delta} \right)\left(\|\nabla \rho\|^2_{H^{s-1}} + \|\nabla u\|^2_{H^{s}} + \|\nabla_q g\|^2_{H^{s}(\mathcal{L}^{2})}\right),
\end{align}
where positive constant $\eta$ is sufficiently small. We thus complete the proof of Lemma \ref{high} .
\end{proof}
\subsection{Estimates with micro weight}
\par
Once we estimate $\|q g\|_{H^{s-1}(\mathcal{L}^{2})}$ as $\|\nabla_q g\|_{H^{s-1}(\mathcal{L}^{2})}$ instead of $\|\langle q \rangle g\|_{H^{s-1}(\mathcal{L}^{2})}$, the terms behaving in $D^{\frac{3}{2}}$ would appear on the right hand side and result in the failure to obtain \eqref{decayenergy}. In order to obtain the estimate in the shape of $\frac{d}{dt}E + D \leq E^{\theta}D$ for some $\theta>0$, the term $\|\nabla \Lambda^s u\|_{L^2}\|\langle q \rangle g\|_{H^{s-1}(\mathcal{L}^{2})}\|\nabla_q\Lambda^s g\|_{L^2(\mathcal{L}^{2})}$ forces us to consider the term $\|\langle q \rangle g\|_{H^{s-1}(\mathcal{L}^{2})}$ acting as $E$ instead.
\begin{lemm}\label{mix}
	Let $(\rho_,u,g)$ be local classical solutions considered in Proposition \ref{pro2} . Then there exist a positive constant $\delta$ such that
	\begin{align}\label{mix estimate}
		\frac {d} {dt} \|\langle q \rangle g\|^2_{H^{s-1}(\mathcal{L}^{2})}  &+ 2\|\langle q \rangle \nabla_q g\|^2_{H^{s-1}(\mathcal{L}^{2})}  \\ \notag 
		&\lesssim C\|\nabla_q g\|^2_{H^{s-1}(\mathcal{L}^{2})} + C_{\delta}\|\nabla u\|^2_{H^{s-1}}+(\epsilon^{\frac{1}{2}}+\delta)\|\langle q\rangle\nabla_qg\|^2_{H^{s-1}(\mathcal{L}^{2})}.
	\end{align}
\end{lemm}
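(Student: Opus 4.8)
The plan is to derive an evolution equation for $\langle q\rangle g$ (and its spatial derivatives $\Lambda^{s-1}$) from $\eqref{eq1}_3$ and then perform an energy estimate in $L^2(\mathcal{L}^2)$, exactly paralleling the structure of Lemmas~\ref{low} and \ref{high} but now keeping track of the micro weight $\langle q\rangle=(1+|q|^2)^{1/2}$. First I would apply $\Lambda^{s-1}$ to $\eqref{eq1}_3$ and multiply by $\langle q\rangle^2$; the main linear term $\mathcal{L}g = -\frac{1}{\psi_\infty}\nabla_q\cdot(\psi_\infty\nabla_q g)$ produces, after taking the $L^2(\mathcal{L}^2)$ inner product with $\langle q\rangle^2\Lambda^{s-1}g$ and integrating by parts in $q$, the good dissipation term $\|\langle q\rangle\nabla_q\Lambda^{s-1}g\|^2_{L^2(\mathcal{L}^2)}$ together with a commutator between $\nabla_q$ and $\langle q\rangle^2$; since $|\nabla_q\langle q\rangle^2| = 2|q| \lesssim \langle q\rangle$ and $|\nabla_q^2\langle q\rangle^2|\lesssim 1$, this commutator is controlled by $\|\langle q\rangle\nabla_q\Lambda^{s-1}g\|_{L^2(\mathcal{L}^2)}\|\Lambda^{s-1}g\|_{L^2(\mathcal{L}^2)}$ plus $\|\nabla_q\Lambda^{s-1}g\|^2_{L^2(\mathcal{L}^2)}$-type quantities, i.e.\ absorbable into the left side by Young's inequality at the cost of the $C\|\nabla_q g\|^2_{H^{s-1}(\mathcal{L}^2)}$ term on the right (using Lemma~\ref{Lemma4}/\ref{Lemma5} to bound lower-weight norms by $\|\nabla_q g\|_{\mathcal{L}^2}$).

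Next I would handle the forcing and transport terms on the right-hand side of $\eqref{eq1}_3$. The term $-\operatorname{div}u - \nabla u\, q\nabla_q\mathcal{U}$: after applying $\Lambda^{s-1}$ and pairing with $\langle q\rangle^2\Lambda^{s-1}g$, the worst piece is $\langle\langle q\rangle^2 \Lambda^{s-1}(\nabla u\, q\nabla_q\mathcal{U}),\Lambda^{s-1}g\rangle$; here I would integrate by parts in $q$ to move one $q$-derivative onto $\langle q\rangle^2\Lambda^{s-1}g$ or instead use the structural assumptions \eqref{potential1}--\eqref{potential2} on $\mathcal{U}$ (notably $\int|\nabla_q^k(q\nabla_q\mathcal{U}\sqrt{\psi_\infty})|^2dq\le C$) to bound it by $\|\nabla u\|_{H^{s-1}}\|\langle q\rangle\nabla_q g\|_{H^{s-1}(\mathcal{L}^2)}$, then split via Young into $C_\delta\|\nabla u\|^2_{H^{s-1}} + \delta\|\langle q\rangle\nabla_q g\|^2_{H^{s-1}(\mathcal{L}^2)}$. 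The convection term $-u\cdot\nabla g$ and the commutator $[\Lambda^{s-1},u]\nabla g$ are treated as in \eqref{2ineq3}--\eqref{2ineq4}, integrating by parts in $x$ to exploit $\operatorname{div}u$ and using Lemma~\ref{Lemma6}; these contribute $\|u\|_{H^s}\|\langle q\rangle\nabla_q g\|^2_{H^{s-1}(\mathcal{L}^2)}\lesssim\epsilon^{1/2}\|\langle q\rangle\nabla_q g\|^2_{H^{s-1}(\mathcal{L}^2)}$. The term $-\frac{1}{\psi_\infty}\nabla_q\cdot(\nabla u\, q g\psi_\infty)$, after $\Lambda^{s-1}$, a commutator split, and integration by parts in $q$ against $\langle q\rangle^2\Lambda^{s-1}g$, yields terms like $\langle\nabla u\, q g\langle q\rangle,\langle q\rangle\nabla_q\Lambda^{s-1}g\rangle$ which are bounded (again via Lemma~\ref{Lemma4} applied to the $|q|^2 g$ and $q\nabla_q\mathcal{U}g$ norms) by $\|\nabla u\|_{L^\infty}$ times $\|\langle q\rangle\nabla_q g\|_{H^{s-1}(\mathcal{L}^2)}\|\langle q\rangle g\|_{H^{s-1}(\mathcal{L}^2)}$ and hence absorbed as $\epsilon^{1/2}\|\langle q\rangle\nabla_q g\|^2_{H^{s-1}(\mathcal{L}^2)}$ after using $\|\langle q\rangle g\|_{H^{s-1}(\mathcal{L}^2)}\lesssim\|\langle q\rangle\nabla_q g\|_{H^{s-1}(\mathcal{L}^2)}$ (a weighted Poincaré-type inequality coming from $\mathcal{L}$, or from Lemma~\ref{Lemma4}).

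Summing the lower-order ($m=0$) and top-order ($m=s-1$) weighted estimates gives the stated inequality. The main obstacle I anticipate is bookkeeping the weight: one must be careful that every time $\langle q\rangle^2$ is differentiated in $q$ the resulting factor is no worse than $\langle q\rangle$, so that one never produces a term with weight $\langle q\rangle^2$ outside an $L^2(\mathcal{L}^2)$ norm that has a $\nabla_q$ to absorb it — in other words, ensuring the right-hand side genuinely closes with only $\|\langle q\rangle\nabla_q g\|^2_{H^{s-1}(\mathcal{L}^2)}$ (times a small constant), $\|\nabla_q g\|^2_{H^{s-1}(\mathcal{L}^2)}$, and $\|\nabla u\|^2_{H^{s-1}}$, with no uncontrolled $\|\langle q\rangle g\|^2$ or higher-weight leftover. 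The assumptions \eqref{potential1}--\eqref{potential2} on $\mathcal{U}$ and Lemma~\ref{Lemma4} are precisely what make this possible, and the choice to carry $\lambda\langle q\rangle g$ with a small parameter $\lambda$ in $E_\lambda$ is what lets the $\epsilon^{1/2}$ and $\delta$ factors here be reconciled with the $(\epsilon/\lambda)^{1/2}$ factor appearing in Lemma~\ref{high}.
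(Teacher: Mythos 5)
Your proposal matches the paper's proof essentially step for step: take the $L^2(\mathcal L^2)$ inner product of $\Lambda^{s-1}\eqref{eq1}_3$ with $\langle q\rangle^2\Lambda^{s-1}g$, integrate by parts in $q$ so that $-\mathcal{L}$ yields the weighted dissipation $\|\langle q\rangle\nabla_q\Lambda^{s-1}g\|^2_{L^2(\mathcal L^2)}$ plus a cross term $2\langle\nabla_q\Lambda^{s-1}g,q\Lambda^{s-1}g\rangle$ absorbed into $C\|\nabla_q g\|^2_{H^{s-1}(\mathcal L^2)}$, bound $-\operatorname{div}u$ and $-\nabla u\,q\nabla_q\mathcal U$ via the potential conditions and $\|\langle q\rangle^2\Lambda^{s-1}g\|_{L^2(\mathcal L^2)}\lesssim\|\langle q\rangle\nabla_q\Lambda^{s-1}g\|_{L^2(\mathcal L^2)}$ (Lemma~\ref{Lemma4}) plus Young with parameter $\delta$, control the transport/commutator and $\nabla_q\cdot(\nabla u\,qg\psi_\infty)$ terms by $\|u\|_{H^s}\lesssim\epsilon^{1/2}$ after another $q$-integration by parts, and add the $m=0$ estimate. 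The only details you glossed over are the second boundary term $J_2=\langle\Lambda^{s-1}(\nabla u\,qg),2q\Lambda^{s-1}g\rangle$ produced when $\nabla_q$ hits $\langle q\rangle^2$ (handled the same way) and the precise redistribution of the $\langle q\rangle^2$ weight in the commutator term, but both close exactly as you indicate.
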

\begin{proof}
	Applying $\Lambda^{s-1}$ to system $\eqref{eq1}_3$ and taking $L^2(\mathcal{L}^2)$ inner product with $\langle q \rangle^2 \Lambda^{s-1} g$, we infer that
	\begin{align}\label{3ineq1}
		\frac {1} {2}\frac {d} {dt} \|\langle q \rangle \Lambda^{s-1} g\|^2_{L^2(\mathcal{L}^{2})} &- \langle \mathcal{L}\Lambda^{s-1} g,\langle q \rangle^2 \Lambda^{s-1} g\rangle \\ \notag
		&= -\langle u\cdot\nabla \Lambda^{s-1} g,\langle q \rangle^2 \Lambda^{s-1} g\rangle-\langle[\Lambda^{s-1},u\cdot\nabla] g,\langle q \rangle^2 \Lambda^{s-1} g\rangle~~~~\\ \notag
		&~~~~-\langle{\rm div}\Lambda^{s-1}u,\langle q\rangle^2\Lambda^{s-1}g\rangle+\langle \nabla \Lambda^{s-1} u q\nabla_{q}\mathcal{U},\langle q \rangle^2 \Lambda^{s-1} g\rangle \\ \notag
		&~~~~-\langle\frac 1 {\psi_\infty} \nabla_q\cdot(\Lambda^{s-1}(\nabla uqg)\psi_\infty),\langle q \rangle^2 \Lambda^{s-1}g\rangle~.
	\end{align}
	According to Lemma \ref{Lemma5} , we get
	\begin{align}\label{3ineq2}
		-\langle \mathcal{L}\Lambda^{s-1} g,\langle q \rangle^2 \Lambda^{s-1} g\rangle &= \|\langle q \rangle \nabla_q \Lambda^{s-1} g\|^2_{L^2(\mathcal{L}^{2})} + 2 \langle \nabla_q \Lambda^{s-1} g,q \Lambda^{s-1}g\rangle \\ \notag
		&\gtrsim \|\langle q \rangle \nabla_q \Lambda^{s-1} g\|^2_{L^2(\mathcal{L}^{2})} - C\|\nabla_q \Lambda^{s-1} g\|^2_{L^2(\mathcal{L}^{2})}.~~~~~~~~~~~~~~~~~~~~~
	\end{align}
	One can get from condition \eqref{potential1} and Lemma \ref{Lemma4} that
		\begin{align}\label{3ineq3'}
		~\langle {\rm div}\Lambda^{s-1}u,\langle q \rangle^2 \Lambda^{s-1} g\rangle 
		&\lesssim \int_{\mathbb{R}^d}|{\rm div} \Lambda^{s-1} u|\cdot\|\langle q \rangle^2 \Lambda^{s-1} g\|_{\mathcal{L}^2}dx\\ \notag
		&\lesssim C_{\delta}\|{\rm div} \Lambda^{s-1} u\|^2_{L^2}+\delta\|\langle q\rangle\nabla_q\Lambda^{s-1} g\|^2_{L^2(\mathcal{L}^{2})}~,
	\end{align}
and
	\begin{align}\label{3ineq3}
		\langle \nabla \Lambda^{s-1} u q\nabla_{q}\mathcal{U},\langle q \rangle^2 \Lambda^{s-1} g\rangle &= \int_{\mathbb{R}^d}\nabla \Lambda^{s-1} u\int_{\mathbb{R}^d}q\nabla_{q}\mathcal{U}\langle q \rangle^2 \Lambda^{s-1} g\psi_{\infty}dqdx\\ \notag
		&\lesssim \int_{\mathbb{R}^d}|\nabla \Lambda^{s-1} u|\cdot\|\langle q \rangle^2 \Lambda^{s-1} g\|_{\mathcal{L}^2}dx\\ \notag
		&\lesssim C_{\delta}\|\nabla \Lambda^{s-1} u\|^2_{L^2}+\delta\|\langle q\rangle\nabla_q\Lambda^{s-1} g\|^2_{L^2(\mathcal{L}^{2})}~,~~~~~~
	\end{align}
	where positive constant $\delta$ is sufficiently small. Applying Lemma \ref{Lemma4} , we have
	\begin{align}\label{3ineq4}
		\langle u\cdot\nabla \Lambda^{s-1} g,\langle q \rangle^2 \Lambda^{s-1} g\rangle &= \int_{\mathbb{R}^d} {\rm divu} \|\langle q\rangle\Lambda^{s-1}g\|^2_{\mathcal{L}^2}dx\\ \notag
		&\lesssim \|\nabla u\|_{H^{s-1}}\|\langle q\rangle\nabla_q\Lambda^{s-1}g\|^2_{L^2(\mathcal{L}^{2})}.~~~~~~~~~~~~~~~~
	\end{align}
	According to Lemmas \ref{4lemma4} and \ref{4lemma6} , one can arrive
	\begin{align}\label{3ineq5}
		\langle[\Lambda^{s-1},u\cdot\nabla] g,\langle q \rangle^2 \Lambda^{s-1} g\rangle &\lesssim
		\|u\|_{H^s}\|g\|_{H^s(\mathcal{L}^{2})}\|\langle q\rangle^2\Lambda^{s-1}g\|_{L^2(\mathcal{L}^{2})} \\ \notag
		&\lesssim \|u\|_{H^s}\left(\|g\|^2_{H^{s}}+\|\langle q\rangle\nabla_q\Lambda^{s-1}g\|^2_{L^2(\mathcal{L}^{2})}\right).~~~~~~~~~~~~~~~~
	\end{align}
	Integrating by parts leads to
	\begin{align}\label{3ineq6}
		-\langle\frac 1 {\psi_\infty} \nabla_q\cdot(\Lambda^{s-1}(\nabla uqg)\psi_\infty),\langle q \rangle^2 \Lambda^{s-1}g\rangle &= \langle\Lambda^{s-1}(\nabla uqg),\langle q \rangle^2 \nabla_q\Lambda^{s-1}g+2q\Lambda^{s-1}g\rangle ~~~\\ \notag
		& = J_1 +J_2~.
	\end{align}
 Lemmas \ref{Lemma4} ensure that
	\begin{align}\label{3ineq7}
		J_1 &= \langle\Lambda^{s-1}(\nabla uqg),\langle q \rangle^2 \nabla_q\Lambda^{s-1}g\rangle  \\ \notag
		&\lesssim \left(\|\nabla \Lambda^{s-1} u\|_{L^2}\|\langle q \rangle^2g\|_{L^{\infty}(\mathcal{L}^2)} + \|\nabla u\|_{L^{\infty}}\|\langle q \rangle^2\Lambda^{s-1}g\|_{L^{2}(\mathcal{L}^2)}\right)\|\langle q \rangle\nabla_q \Lambda^{s-1} g\|_{L^{2}(\mathcal{L}^2)}\\ \notag
		&\lesssim\|u\|_{H^s}\|\langle q \rangle \nabla_q g\|^2_{H^{s-1}(\mathcal{L}^2)}~,
	\end{align}
	and
	\begin{align}\label{3ineq8}
		J_2 &= \langle\Lambda^{s-1}(\nabla uqg),2q\Lambda^{s-1}g\rangle  \\ \notag
		&\lesssim \left(\|\nabla \Lambda^{s-1} u\|_{L^2}\|qg\|_{L^{\infty}(\mathcal{L}^2)} + \|\nabla u\|_{L^{\infty}}\|qg\|_{L^{2}(\mathcal{L}^2)}\right)\|q\Lambda^{s-1} g\|_{L^{2}(\mathcal{L}^2)} ~~~~~~~~~~~~~~~~~~~~~\\ \notag
		&\lesssim\|u\|_{H^s}\|\nabla_q g\|^2_{H^{s-1}(\mathcal{L}^2)}~.
	\end{align}
	Combining the estimates \eqref{3ineq2}-\eqref{3ineq8} , we infer that
	\begin{align}\label{3ineq9}
		\frac {d} {dt} \|\langle q \rangle \Lambda^{s-1}g\|^2_{L^2(\mathcal{L}^{2})}  &+ 2\|\langle q \rangle \nabla_q \Lambda^{s-1} g\|^2_{L^2(\mathcal{L}^{2})}\\ \notag 
		&\lesssim C\|\nabla_q \Lambda^{s-1} g\|^2_{L^2(\mathcal{L}^{2})} + C_{\delta}\|\nabla \Lambda^{s-1} u\|^2_{L^2}+\delta\|\langle q\rangle\nabla_q\Lambda^{s-1}g\|^2_{L^2(\mathcal{L}^{2})} \\ \notag	 
		&~~~~+\|u\|_{H^s}\left(\|g\|^2_{H^{s}}+\|\langle q\rangle\nabla_qg\|^2_{H^{s-1}(\mathcal{L}^{2})}\right)\\ \notag 
		&\lesssim C\|\nabla_q \Lambda^{s-1} g\|^2_{L^2(\mathcal{L}^{2})} +  C_{\delta}\|\nabla u\|^2_{H^{s-1}}+(\epsilon^{\frac{1}{2}}+\delta)\|\langle q\rangle\nabla_qg\|^2_{H^{s-1}(\mathcal{L}^{2})}~.
	\end{align}
Taking $L^2(\mathcal{L}^2)$ inner product with $\langle q \rangle^2 g $ to system $\eqref{eq1}_3$ , we have
	\begin{align}\label{3ineq10}
		\frac {d} {dt} \|\langle q \rangle g\|^2_{L^2(\mathcal{L}^{2})}  &+ 2\|\langle q \rangle \nabla_q g\|^2_{L^2(\mathcal{L}^{2})} \\ \notag
		&\lesssim C\|\nabla_q g\|^2_{L^2(\mathcal{L}^{2})} + C_{\delta}\|\nabla u\|^2_{L^2}+\delta\|\langle q\rangle\nabla_qg\|^2_{L^2(\mathcal{L}^{2})} \\ \notag
		&~~~+\|u\|_{H^s}\left(\|g\|^2_{L^2}+\|\langle q\rangle\nabla_qg\|^2_{L^2(\mathcal{L}^{2})}\right)\\ \notag
		&\lesssim C\|\nabla_q g\|^2_{L^2(\mathcal{L}^{2})} + C_{\delta}\|\nabla u\|^2_{L^2}+ (\epsilon^{\frac{1}{2}}+\delta)\|\langle q\rangle\nabla_qg\|^2_{L^2(\mathcal{L}^{2})}~.~~~~~~~~~~~~~~~~~~~~~
	\end{align}
	According to \eqref{3ineq9} and \eqref{3ineq10} , we conclude that
	\begin{align}\label{3ineq11}
		\frac {d} {dt} \|\langle q \rangle g\|^2_{H^{s-1}(\mathcal{L}^{2})}  &+ 2\|\langle q \rangle \nabla_q g\|^2_{H^{s-1}(\mathcal{L}^{2})}  \\ \notag 
		&\lesssim C\|\nabla_q g\|^2_{H^{s-1}(\mathcal{L}^{2})} + C_{\delta}\|\nabla u\|^2_{H^{s-1}}+(\epsilon^{\frac{1}{2}}+\delta)\|\langle q\rangle\nabla_qg\|^2_{H^{s-1}(\mathcal{L}^{2})}.~~~~~~~~~~~
	\end{align}
	This complete the proof of Lemma \ref{mix} .
\end{proof}

{\bf The proof of Proposition \ref{pro2} :}  \\
Denote the energy and energy dissipation functionals as follows :
\begin{align*}
	E_{\eta,\lambda}(t)&=\sum_{m=0,s}\left(\|h(\rho)^{\frac 1 2}\Lambda^m\rho \|^2_{L^2}+\|(1+\rho)^{\frac 1 2}\Lambda^m u\|^2_{L^2}+\|\Lambda^mg\|^2_{L^2(\mathcal{L}^{2})} \right) \\ \notag
	& ~~~~ + \sum_{m=0,s-1}\lambda\|\langle q \rangle \Lambda^m g\|^2_{L^2(\mathcal{L}^{2})}+2\eta\sum_{m=0,s-1}\int_{\mathbb{R}^{d}} \Lambda^m u\nabla\Lambda^m \rho dx~,
\end{align*}
and
\begin{align*}
	D_{\eta,\lambda}(t)&=\eta\gamma\|\nabla\rho\|^2_{H^{s-1}}+\mu\|\nabla u\|^2_{H^{s}}+(\mu+\mu')\|{\rm div}u\|^2_{H^{s}}+\|\nabla_q g\|^2_{H^{s}(\mathcal{L}^{2})} \\ \notag
	&~~~+ \lambda\|\langle q \rangle \nabla_q g\|^2_{H^{s-1}(\mathcal{L}^{2})}~.~~~~~~~~~~~~~~~~~~~~~~~~~~~~~
\end{align*}
For some small positive constant $\eta$, we obtain $E_{\lambda}(t)\sim E_{\eta,\lambda}(t)$ and $D_{\lambda}(t)\sim D_{\eta,\lambda}(t)$ . Hence the small assumption $E_{\lambda}(t)\leq\epsilon$ yields that $E_{\eta,\lambda}(t)\lesssim \epsilon$ .
Combining the estimates \eqref{4prop2} , \eqref{4prop3} and \eqref{4prop4} , we arrive at
\begin{align}
	\frac{d}{dt}E_{\eta,\lambda}&(t) + 2D_{\eta,\lambda}(t) \\ \notag
	&\lesssim \left(\epsilon^{\frac{1}{2}} + \left(\frac{\epsilon}{\lambda}\right)^{\frac{1}{2}} +\epsilon + \delta + \eta + \left( \eta + \lambda \right) C_{\delta}\right)\left(\|\nabla \rho\|^2_{H^{s-1}} + \|\nabla u\|^2_{H^{s}} + \|\nabla_q g\|^2_{H^{s}(\mathcal{L}^{2})}\right)~~ \\ \notag
	& ~~~~ + \lambda C\|\nabla_q g\|^2_{H^{s-1}(\mathcal{L}^{2})} +\lambda(\epsilon^{\frac{1}{2}}+\delta)\|\langle q\rangle\nabla_qg\|^2_{H^{s-1}(\mathcal{L}^{2})} \\ \notag
	& \leq D_{\eta,\lambda}~,
\end{align}
where positive constant $\lambda$ , $\delta$ and $\epsilon$ are small enough. Thus we deduce that
\begin{align}
	\sup_{t\in[0,T]} E_{\eta, \lambda}(t) + \int_0^T D_{\eta,\lambda}(t)dt \leq E_{\eta, \lambda}(0)~.
\end{align}
Using the equivalence of $E_{\lambda}(t) \sim E_{\eta,\lambda}(t)$ and $D_{\lambda}(t) \sim D_{\eta,\lambda}(t)$ , there exists constant $C_0>1$ such that
\begin{align}
	\sup_{t\in[0,T]} E_{\lambda}(t) + \int_0^T D_{\lambda}(t)dt \leq C_0E_{\lambda}(0)~.
\end{align}
Therefore, the proof of Proposition \ref{pro2} is completed.
\hfill$\Box$\\

 One can obtain Theorem \ref{th1} from Propositions \ref{pro1} and \ref{pro2} by continuity argument. The proof is omitted here and more details can refer to \cite{2017Global}.
 
  \begin{rema}
 	One may also derive global existence of the system \eqref{eq0} by considering the functional $\mathop{\Sigma}\limits_{n=1}^2 \|\langle q\rangle \nabla_q^n g\|^2_{H^s} + \|\langle q\rangle \nabla_q^3 g\|^2_{H^{s-1}}$ . Nevertheless, the excessive smoothness at micro variable $q\in\mathbb{R}^d$ is technically unnecessary to obtain global priori estimation.
 \end{rema}

 \begin{rema}
 	In comparision to the macro-micro models studied in \cite{2017Global}, it's reasonable for us to remove the term ${\rm div} u \psi$ from the equation $\psi$ obey. Otherwise, one can derive ${\rm div} u =0$ from the equation $\psi$ obey by assumption $\int_{\mathbb{R}^d}\psi dq =\int_{\mathbb{R}^d}\psi_0 dq$. However, the condition $\int_{\mathbb{R}^d}\psi dq =\int_{\mathbb{R}^d}\psi_0 dq$ is of great significance in the estimation of ${\rm div}_q \left(\nabla uq\psi\right)$ and it seems impossible to obtain global priori estimation without any dissipation or conservation law for $\int_{\mathbb{R}^d}\psi dq$. Therefore, we underlined that the system \eqref{eq0} is meaningful and the results in this paper indeed cover those obtained in \cite{2017Global,2018Global} .
 \end{rema}
  
\section{The $L^2$ decay rate}
This section is devoted to investigating the long time behaviour for the micro-macro model for compressible polymeric fluids near equilibrium. The most difficult for us is that the additional stress tensor $\tau$ does not decay fast enough. Thus we failed to use the bootstrap argument as in \cite{Schonbek1985,Luo-Yin,Luo-Yin2}. To deal with this term, we need to use the coupling effect between $\rho$, $u$ and $g$. Different from the classical potential $\mathcal{U}=\frac{1}{2}|q|^2$, the lack of  $\|(\rho,u)\|_{L^1}$  leads the loss of $\frac{d}{8}$ time decay rate, which forces us to obtain weaker $L^2$ decay rate at the very beginning. To over this difficulty, we prove the global solutions of system $(\ref{eq1})$ obtained belong to some Besov space with negative index as that of \cite{luozhaonanFENE}. In reward, it helps to improve the $L^2$ decay rate obtained by using the bootstrap argument. Further, we infer from \cite{He2009} that $\|g\|_{L^2(\mathcal{L}^{2})}$ obtains more $\frac{1}{2}$ decay than $\|(\rho,u)\|_{L^2}$.

To begin with, we rewrite the micro-macro model \eqref{eq1} as follows :
\begin{align}\label{eq3}
	\left\{
	\begin{array}{ll}
		\rho_t + {\rm div}u = F~,  \\[1ex]
		u_t - {\rm div}\Sigma u + \gamma\nabla\rho-{\rm div}\tau = G~,  \\[1ex]
		g_t+\mathcal{L}g-\nabla^i u^j q^i \partial_{q_j}\mathcal{U} + {\rm div}u = H~, \\[1ex]
	\end{array}
	\right.
\end{align}
where $F=-\rm div(\rho u)$, $G=-u\cdot\nabla u+[i(\rho)-1](\rm div\Sigma(u)+\rm div \tau)+[\gamma-h(\rho)]\nabla\rho$ and $H=-u\cdot\nabla g-\frac 1 {\psi_\infty} \nabla_q\cdot(\nabla u qg\psi_\infty)$. To simplify the presentation, we firstly give the following notations.

Denote the energy and energy dissipation functionals considered in this chapter as follows :
\begin{align*}
	&E_0(t)=\|(\rho,u)\|^2_{H^s} + \|g\|^2_{H^{s}(\mathcal{L}^{2})} ~,\\ \notag
	&E_1(t)=\|\Lambda^1(\rho,u)\|^2_{H^{s-1}} + \|\Lambda^1 g\|^2_{H^{s-1}(\mathcal{L}^{2})}~, \\ \notag	&D_0(t)=\gamma\eta\|\nabla\rho\|^2_{H^{s-1}}+\mu\|\nabla u\|^2_{H^{s}}+(\mu+\mu')\|{\rm divu}\|^2_{H^{s}}
	+\|\nabla_q g\|^2_{H^{s}(\mathcal{L}^{2})} ~ ,\\ \notag
	&D_1(t)=\gamma\eta\|\nabla\Lambda^1\rho\|^2_{H^{s-2}}+\mu\|\nabla \Lambda^1u\|^2_{H^{s-1}}+(\mu+\mu')\|{\rm div\Lambda^1u}\|^2_{H^{s-1}}
	+\|\nabla_q \Lambda^1g\|^2_{H^{s-1}(\mathcal{L}^{2})}~.
\end{align*}

Denote the following domains involved in Fourier splitting method  :
$$
S(t) = \{\xi: |\xi|^2 \leq C_d\frac{f'(t)}{f(t)}\} ~~~~ \text{with}~~~~~~ f(t) = 1+t .~~~~~~~~~~~~~~~~~~~~~~~~
$$

Denote the following energy and energy dissipation functionals involved in Fourier splitting method:
\begin{align*}
	~H_0 = \mu\|u\|^2_{H^s} + \eta\gamma \|\rho\|^2_{H^{s-1}}~~~~~~~~\text{and}~~~~~H_1 =  \mu\|\Lambda^1 u\|^2_{H^{s-1}}+\eta\gamma\|\Lambda^1 \rho\|^2_{H^{s-2}}~.
\end{align*}

Denote two important factors $B_1$ and $B_2$ :
$$
B_1 = \int_0^t\|(\rho,u)\|^4_{L^2} ds ~~~~\text{and}~~~~
B_2 = \int_{0}^{t}\int_{S(t)}|\hat{G}\cdot\bar{\hat{u}}| d\xi ds'~.
$$

We give the key lemma of time decay estimation in the following.
\begin{lemm}\label{decaylem1}
	Let $d\geq3$. Assume $(\rho,u,g)$ be a global strong soluition of system \eqref{eq1} with initial data $(\rho_0,u_0,g_0)$ under the conditions in Theorem  \ref{th2} . Then there exist a positive time $T_0$ such that 
	\begin{align}\label{63ineq5}
		\int_{S(t)}\left(|\hat{\rho}|^2+|\hat{u}|^2+\|\hat{g}\|^2_{\mathcal{L}^2}\right)d\xi
		&\lesssim \left(\frac{f'(t)}{f(t)}\right)^{\frac d 2 }\left(1+\|(\rho_0,u_0)\|^2_{\dot{B}^{-\frac d 2}_{2,\infty}}+\|g_0\|^2_{\dot{B}^{-\frac d 2}_{2,\infty}(\mathcal{L}^2)}\right) \\ \notag
		&+ \left(\frac{f'(t)}{f(t)}\right)^{\frac d 2}B_1 + B_2 ~,
	\end{align}
for any $t>T_0$.
\end{lemm}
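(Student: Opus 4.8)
The plan is to run the Schonbek Fourier-splitting scheme directly on the reformulated system \eqref{eq3} in frequency space. First I pass to the Fourier side: pairing the Fourier transform of $\eqref{eq3}_1$ with $2\gamma\bar{\hat\rho}$, that of $\eqref{eq3}_2$ with $2\bar{\hat u}$, and that of $\eqref{eq3}_3$ with $2\bar{\hat g}$ in $\mathcal{L}^2$, three structural cancellations take place. The acoustic coupling between $\hat\rho$ and $\hat u$ drops out since $i\xi\cdot\hat u\,\bar{\hat\rho}+i\xi\hat\rho\cdot\bar{\hat u}$ is purely imaginary; the term ${\rm div}\,u$ in the $g$-equation pairs against $\int\bar{\hat g}\,\psi_\infty\,dq=\overline{\widehat{\int g\psi_\infty dq}}=0$ by \eqref{1ineq2}; and the term $\nabla u\,q\nabla_q\mathcal{U}$ in the $g$-equation cancels against ${\rm div}\,\tau$ in the $u$-equation exactly as in the identity $\int\nabla u:\tau\,dx=-\int u\cdot{\rm div}\,\tau\,dx$ used in \eqref{1ineq13}. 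What survives is a pointwise-in-$\xi$ identity with dissipation $2\mu|\xi|^2|\hat u|^2+2(\mu+\mu')|\xi\cdot\hat u|^2+2\|\nabla_q\hat g\|^2_{\mathcal{L}^2}$ whose right-hand side is bounded by $|\hat F||\hat\rho|+|\hat G||\hat u|+\|\hat H\|_{\mathcal{L}^2}\|\hat g\|_{\mathcal{L}^2}$; it has no dissipation for $\hat\rho$.

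To recover the missing low-frequency dissipation of $\hat\rho$ I add the corrector $\kappa\,\frac{d}{dt}\,{\rm Re}\,(i\xi\hat\rho\cdot\bar{\hat u})$ with $\kappa$ small: using $\hat\rho_t,\hat u_t$ from \eqref{eq3} it produces the good term $-\kappa\gamma|\xi|^2|\hat\rho|^2$, terms of size $\kappa|\xi|^2|\hat u|^2+\kappa|\xi|^2\|\hat g\|^2_{\mathcal{L}^2}$ (harmless once $|\xi|$ is bounded), and two forcing terms $\kappa|\xi||\hat F||\hat u|$, $\kappa|\xi||\hat\rho||\hat G|$. Setting
\begin{equation*}
\mathscr{L}(t,\xi):=\gamma|\hat\rho|^2+|\hat u|^2+\|\hat g\|^2_{\mathcal{L}^2}+\kappa\,{\rm Re}\,(i\xi\hat\rho\cdot\bar{\hat u}),
\end{equation*}
which is equivalent to $|\hat\rho|^2+|\hat u|^2+\|\hat g\|^2_{\mathcal{L}^2}$ once $|\xi|^2\le C_d$, and using the spectral gap $\|\nabla_q\hat g\|^2_{\mathcal{L}^2}\gtrsim\|\hat g\|^2_{\mathcal{L}^2}$ (valid since $\int\hat g\,\psi_\infty\,dq=0$), I obtain on that region a Lyapunov inequality $\frac{d}{dt}\mathscr{L}+c_0|\xi|^2\mathscr{L}+c_0\|\hat g\|^2_{\mathcal{L}^2}\lesssim(\text{the forcing terms above})$. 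I then estimate the forcing: $\hat F=-i\xi\cdot\widehat{\rho u}$ gives $|\hat F|\le|\xi|\,\|\rho\|_{L^2}\|u\|_{L^2}$; Lemmas \ref{Lemma4} and \ref{Lemma5} control $\widehat{{\rm div}\,\tau}$ and $\hat H$ in $L^1_x(\mathcal{L}^2)$ by $\|\nabla u\|_{L^2}\|\nabla_q g\|_{L^2(\mathcal{L}^2)}$ and $\|u\|_{L^2}\|\nabla_x g\|_{L^2(\mathcal{L}^2)}$; and I split $G$ into its divergence/gradient part ($-u\cdot\nabla u$ and $[\gamma-h(\rho)]\nabla\rho$, each carrying a factor $|\xi|$ with $L^1$-coefficient quadratic in $(\rho,u)$) and its rough part $[i(\rho)-1]({\rm div}\,\Sigma u+{\rm div}\,\tau)$, whose $L^1$-norm is $\lesssim\|\rho\|_{L^2}(\|\nabla^2u\|_{L^2}+\|\nabla_x\nabla_q g\|_{L^2(\mathcal{L}^2)})$. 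Young's inequality together with absorption of the $|\xi|^2|\hat\rho|^2$, $|\xi|^2|\hat u|^2$, $\|\hat g\|^2_{\mathcal{L}^2}$ pieces then yields
\begin{equation*}
\tfrac{d}{dt}\mathscr{L}(t,\xi)+c_1|\xi|^2\mathscr{L}(t,\xi)\lesssim\|(\rho,u)(s)\|^4_{L^2}+\mathscr{R}(s)+|\hat G(s,\xi)|\,|\hat u(s,\xi)|,
\end{equation*}
where $\mathscr{R}(s)=\|u\|^2_{L^2}\|\nabla_x g\|^2_{L^2(\mathcal{L}^2)}+\|\nabla u\|^2_{L^2}\big(\|\nabla_q g\|^2_{L^2(\mathcal{L}^2)}+\|\rho\|^2_{L^2}\big)+\|\rho\|^2_{L^2}\|\nabla^2u\|^2_{L^2}$ is a sum of (entry of $E_\lambda$)$\times$(entry of $D_\lambda$), so $\int_0^\infty\mathscr{R}(s)\,ds\lesssim\epsilon^2$ by Theorem \ref{th1}.

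To conclude, Gronwall at fixed $\xi$ gives
\begin{equation*}
\mathscr{L}(t,\xi)\lesssim e^{-c_1|\xi|^2t}\mathscr{L}(0,\xi)+\int_0^t e^{-c_1|\xi|^2(t-s)}\big(\|(\rho,u)(s)\|^4_{L^2}+\mathscr{R}(s)+|\hat G(s,\xi)|\,|\hat u(s,\xi)|\big)\,ds,
\end{equation*}
and I integrate in $\xi$ over $S(t)$, whose measure is $\simeq(f'(t)/f(t))^{d/2}$. The initial term is $\le\int_{S(t)}\mathscr{L}(0,\xi)\,d\xi$, which by the dyadic bound $\int_{|\xi|\le r}|\hat v_0|^2\,d\xi\le\sum_{2^j\lesssim r}\|\dot\Delta_j v_0\|^2_{L^2}\lesssim r^d\|v_0\|^2_{\dot{B}^{-\frac d2}_{2,\infty}}$ is $\lesssim(f'/f)^{d/2}\big(\|(\rho_0,u_0)\|^2_{\dot{B}^{-\frac d2}_{2,\infty}}+\|g_0\|^2_{\dot{B}^{-\frac d2}_{2,\infty}(\mathcal{L}^2)}\big)$; the $\|(\rho,u)\|^4_{L^2}$ term gives $(f'/f)^{d/2}B_1$; the $\mathscr{R}$ term gives $(f'/f)^{d/2}\int_0^t\mathscr{R}\,ds\lesssim(f'/f)^{d/2}$; and, bounding $e^{-c_1|\xi|^2(t-s)}\le1$, the last term gives $\int_0^t\int_{S(t)}|\hat G\cdot\bar{\hat u}|\,d\xi\,ds'=B_2$. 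Since $\mathscr{L}\simeq|\hat\rho|^2+|\hat u|^2+\|\hat g\|^2_{\mathcal{L}^2}$ on $S(t)$, this is \eqref{63ineq5}. The threshold $T_0$ appears because coercivity of $\mathscr{L}$ and the $\kappa|\xi|^2(\cdots)$ and spectral-gap absorptions all require $C_d f'(t)/f(t)$ to be small; and in the Gronwall step one uses that $f'/f$ is decreasing, so $S(s)\supseteq S(t)$ for $s\le t$ and $|\xi|$ stays in that small range over all of $[0,t]$.

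The step I expect to be the genuine obstacle is the construction of the coercive Lyapunov functional $\mathscr{L}$: it must remain uniformly equivalent to $|\hat\rho|^2+|\hat u|^2+\|\hat g\|^2_{\mathcal{L}^2}$, generate honest $|\xi|^2|\hat\rho|^2$ dissipation, and create no forcing outside the three admissible shapes $(f'/f)^{d/2}B_1$, $B_2$, $(f'/f)^{d/2}$. In particular, controlling the corrector-induced forcing $|\xi||\hat\rho||\hat G|$ is what forces the good/rough splitting of the momentum nonlinearity $G$, and the rough pieces $[i(\rho)-1]\,{\rm div}\,\Sigma(u)$, $[i(\rho)-1]\,{\rm div}\,\tau$ must be paired with the $\dot H^s$-level dissipation in $D_\lambda$ and the smallness from Theorem \ref{th1} so that their total contribution, after integrating in time, is an absolute constant.
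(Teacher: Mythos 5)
Your proposal is correct and follows essentially the same route as the paper: Fourier transform the reformulated system, observe the same three cancellations (acoustic, constraint, and the $\nabla u\,q\nabla_q\mathcal U$ versus ${\rm div}\,\tau$ pairing), build a frequency-pointwise Lyapunov functional by adding a $\mathcal{R}e(i\xi\hat\rho\cdot\bar{\hat u})$ corrector to generate $|\xi|^2|\hat\rho|^2$ dissipation, bound the forcing by $L^1\to L^\infty$ Fourier estimates and Theorem~\ref{th1}, integrate in $s$ then in $\xi$ over $S(t)$, and use $\|\dot\Delta_j v_0\|_{L^2}\le 2^{jd/2}\|v_0\|_{\dot B^{-d/2}_{2,\infty}}$ for the initial data. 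The only genuine deviations are cosmetic: you present the time integration as a Gronwall step with $e^{-c_1|\xi|^2(t-s)}\le 1$ where the paper simply drops the nonnegative dissipation before integrating, and you take a small free parameter $\kappa$ in the corrector whereas the paper uses the fixed coefficient $2(\mu+\mu')$ (which is precisely what forces the threshold time $T_0$ via estimate~\eqref{4ineq10}; with your small $\kappa$, coercivity of $\mathscr{L}$ is automatic and $T_0$ is then driven only by the $\kappa|\xi|^2(\cdots)$ and $|\xi|^2\|\hat\tau\|$ absorptions, as you note). You also invoke the Poincar\'e inequality in $q$ to obtain $\|\hat g\|^2_{\mathcal{L}^2}$ dissipation explicitly; the paper keeps $\|\nabla_q\hat g\|^2_{\mathcal{L}^2}$ and only uses Lemma~\ref{Lemma5} to absorb the $\hat\tau$-terms, but since the $g$-dissipation is discarded anyway when the estimate is integrated over $S(t)$, this makes no difference. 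The forcing bookkeeping and the final sorting into the three shapes $(f'/f)^{d/2}(\cdots)$, $(f'/f)^{d/2}B_1$, and $B_2$ match the paper's \eqref{4ineq15}--\eqref{4ineq18}.
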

\begin{proof}
	Applying Fourier transformation to system \eqref{eq3} leads to
	\begin{align}\label{4ineq1}
		~~~~~~~~~~~~
		\left\{
		\begin{array}{ll}
			\hat{\rho}_t+i\xi_{k} \hat{u}^k=\hat{F},  \\[1ex]
			\hat{u}^{j}_t+\mu|\xi|^2 \hat{u}^j+(\mu+\mu')\xi_{j} \xi_{k} \hat{u}^k+i\xi_{j} \gamma\hat{\rho}-i\xi_{k} \hat{\tau}^{jk}=\hat{G}^j,  \\[1ex]
			\hat{g}_t+\mathcal{L}\hat{g}-i\xi_{k} \hat{u}^j R_j \partial_{R_k}\mathcal{U}+i\xi_{k} \hat{u}^k=\hat{H}. \\[1ex]
		\end{array}
		\right.
	\end{align}
	Multiplying $\bar{\hat{\rho}}(t,\xi)$ by system $\eqref{4ineq1}_1$ and taking the real part, we get
	\begin{align}\label{4ineq2}
		\frac 1 2 \frac d {dt} |\hat{\rho}|^2+\mathcal{R}e[i\xi\cdot\hat{u}\bar{\hat{\rho}}]=\mathcal{R}e[\hat{F}\bar{\hat{\rho}}]~.~~~~~~~~~~~~~~~~~~~
	\end{align}
	Multiplying $\bar{\hat{u}}^j(t,\xi)$ by system $\eqref{4ineq1}_2$ and taking the real part, we have
	\begin{align}\label{4ineq3}
		\frac 1 2 \frac d {dt} |\hat{u}|^2+\mathcal{R}e[\gamma\hat{\rho}i\xi\cdot\bar{\hat{u}}]&+\mu|\xi|^2 |\hat{u}|^2+(\mu+\mu')|\xi\cdot\hat{u}|^2\\ \notag
		&=\mathcal{R}e[i\xi\otimes\bar{\hat{u}}(t,\xi):\hat{\tau}]+\mathcal{R}e[\hat{G}\cdot\bar{\hat{u}}].~~~
	\end{align}
	Similar to the above estimate, one can deduce that
	\begin{align}\label{4ineq4}
		\frac 1 2 \frac d {dt} \|\hat{g}\|^2_{\mathcal{L}^2}+
		\|\nabla_q \hat{g}\|^2_{\mathcal{L}^2}=\mathcal{R}e[i\xi\otimes\hat{u}:\bar{\hat{\tau}}]+\mathcal{R}e[\int_{\mathbb{R}^d}\hat{H}\bar{\hat{g}}\psi_\infty dq],
	\end{align}
	Since
	$$\mathcal{R}e[i\xi\cdot\hat{u}\bar{\hat{\rho}}]+\mathcal{R}e[\hat{\rho}i\xi\cdot\bar{\hat{u}}]
	=\mathcal{R}e[i\xi\otimes\bar{\hat{u}}(t,\xi):\hat{\tau}]+\mathcal{R}e[i\xi\otimes\hat{u}:\bar{\hat{\tau}}]=0~,~~~~~$$
it follows that
	\begin{align}\label{4ineq5}
		\frac 1 2 \frac d {dt} (\gamma|\hat{\rho}|^2+|\hat{u}|^2+\|\hat{g}\|^2_{\mathcal{L}^2})&+\mu|\xi|^2 |\hat{u}|^2+(\mu+\mu')|\xi\cdot\hat{u}|^2
		+\|\nabla_q\hat{g}\|^2_{\mathcal{L}^2}  \\ \notag
		&=\mathcal{R}e[\gamma\hat{F}\bar{\hat{\rho}}]+\mathcal{R}e[\hat{G}\cdot\bar{\hat{u}}]+\mathcal{R}e[\int_{\mathbb{R}^d}\hat{H}\bar{\hat{g}}\psi_\infty dq]~.
	\end{align}
	Multiplying $i\xi\cdot\bar{\hat{u}}$ by system $\eqref{4ineq1}_1$ and taking the real part, we get
	\begin{align}\label{4ineq6}
		\mathcal{R}e[\hat{\rho}_t i\xi\cdot\bar{\hat{u}}]-|\xi\cdot\hat{u}|^2=\mathcal{R}e[\hat{F}i\xi\cdot\bar{\hat{u}}]~.~~~~~~~~~~~~~~~~
	\end{align}
	Multiplying $-i\xi_j\bar{\hat{\rho}}$ by system $\eqref{4ineq1}_2$ and taking the real part, we have
	\begin{align}\label{4ineq7}
		\mathcal{R}e[\hat{\rho}i\xi\cdot\bar{\hat{u}}_t]&+\gamma|\xi|^2 |\hat{\rho}|^2+(2\mu+\mu')|\xi|^2 \mathcal{R}e[\hat{\rho}i\xi\cdot\bar{\hat{u }}] \\ \notag &=\mathcal{R}e[\hat{\rho}\xi\otimes\xi:\bar{\hat{\tau}}]+\mathcal{R}e[\bar{\hat{G}}\cdot i\xi\hat{\rho}]~.
	\end{align}
	Combining the estimates \eqref{4ineq2}$-$\eqref{4ineq7}, we obtain
	\begin{align}\label{4ineq8}
		\frac 1 2 &\frac d {dt} \left(\gamma|\hat{\rho}|^2+|\hat{u}|^2+\|\hat{g}\|^2_{\mathcal{L}^2}+2(\mu+\mu')\mathcal{R}e[\hat{\rho}i\xi\cdot\bar{\hat{u}}]\right) \\ \notag
		&~~~~+\mu|\xi|^2 |\hat{u}|^2+(\mu+\mu')\gamma|\xi|^2 |\hat{\rho}|^2
		+\|\nabla_q \hat{g}\|^2_{\mathcal{L}^2}  \\ \notag
		&=-(\mu+\mu')(2\mu+\mu')|\xi|^2 \mathcal{R}e[\hat{\rho}i\xi\cdot\bar{\hat{u }}] +(\mu+\mu')\mathcal{R}e[\hat{\rho}\xi\otimes\xi:\bar{\hat{\tau}}] \\ \notag
		&~~~~+(\mu+\mu')\mathcal{R}e[\hat{F}i\xi\cdot\bar{\hat{u}}] +(\mu+\mu')\mathcal{R}e[\bar{\hat{G}}\cdot i\xi\hat{\rho}]+\mathcal{R}e[\gamma\hat{F}\bar{\hat{\rho}}] \\ \notag
		&~~~~+\mathcal{R}e[\hat{G}\cdot\bar{\hat{u}}]+\mathcal{R}e[\int_{\mathbb{R}^d}\hat{H}\bar{\hat{g}}\psi_\infty dq]~.
	\end{align}
	Owing to $\xi\in S(t)$ such that $|\xi|^2 \leq C_d\frac{f'(t)}{f(t)}$, we infer that
	\begin{align}\label{4ineq9}
		&(\mu+\mu')\mathcal{R}e[\hat{F}i\xi\cdot\bar{\hat{u}}]+(\mu+\mu')\mathcal{R}e[\bar{\hat{G}}\cdot i\xi\hat{\rho}]+\mathcal{R}e[\gamma\hat{F}\bar{\hat{\rho}}] \\
		\notag
		&\leq C\left(|\widehat{\rho u}|^2+|\hat{G}|^2\right)+\frac 1 {10} \left(\mu|\xi|^2 |\hat{u}|^2+(\mu+\mu')\gamma|\xi|^2 |\hat{\rho}|^2\right)~.~~~~~~
	\end{align}
	Taking~ $t$ large enough leads to
	\begin{align}\label{4ineq10}
		2(\mu+\mu')\mathcal{R}e[\hat{\rho}i\xi\cdot\bar{\hat{u}}]\leq
		\frac 1 {10}\left(|\hat{u}|^2+\gamma|\hat{\rho}|^2\right).~~~~~~~~~
	\end{align}
	One can get from Lemmas \ref{Lemma4} and \ref{Lemma5} that
	\begin{align}\label{4ineq11}
		-&(\mu+\mu')(2\mu+\mu')|\xi|^2 \mathcal{R}e[\hat{\rho}i\xi\cdot\bar{\hat{u }}]+(\mu+\mu')\mathcal{R}e[\hat{\rho}\xi\otimes\xi:\bar{\hat{\tau}}] ~~~~~~~~~~~~ \\ \notag
		&\leq\frac 1 {10}\left(\mu|\xi|^2 |\hat{u}|^2+(\mu+\mu')\gamma|\xi|^2 |\hat{\rho}|^2
		+\|\nabla_q \hat{g}\|^2_{\mathcal{L}^2}\right)~,
	\end{align}
and
	\begin{align}\label{4ineq12}
		\begin{split}
			\mathcal{R}e[\int_{B}\hat{H}\bar{\hat{g}}\psi_\infty dR]&\leq C_\delta\int_{B}\psi_\infty|\mathcal{F}(u\cdot\nabla g)|^2
			+\psi_\infty|\mathcal{F}(\nabla u\cdot{R}g)|^2  dR+\delta\|\nabla_q \hat{g}\|^2_{\mathcal{L}^2}~.
		\end{split}
	\end{align}
	Thus combining the estimates \eqref{4ineq9}-\eqref{4ineq12} , we conclude that
	\begin{align}\label{4ineq13}
		|\hat{\rho}|^2+|\hat{u}|^2+\|\hat{g}\|^2_{\mathcal{L}^2}
		&\leq C\left(|\hat{\rho}_0|^2+|\hat{u}_0|^2+\|\hat{g}_0\|^2_{\mathcal{L}^2}\right)+C\int_{0}^{t}|\hat{G}|^2+|\widehat{\rho u}|^2+ |\hat{G}\cdot\bar{\hat{u}}|ds' ~~~ \\ \notag
		&~~~~+C_\delta\int_{0}^{t}\int_{\mathbb{R}^d}\psi_\infty|\mathcal{F}(u\cdot\nabla g)|^2+\psi_\infty|\mathcal{F}(\nabla u\cdot{q}g)|^2 dRds'~.
	\end{align}
	Integrating $\xi$ over $S(t)$ leads to
	\begin{align}\label{4ineq14}
		\int_{S(t)}\left(|\hat{\rho}|^2+|\hat{u}|^2+\|\hat{g}\|^2_{\mathcal{L}^2}\right)d\xi
		&\leq C\int_{S(t)} \left(|\hat{\rho}_0|^2+|\hat{u}_0|^2+\|\hat{g}_0\|^2_{\mathcal{L}^2}\right)d\xi \\ \notag
		& ~~~~+C\int_{S(t)}\int_{0}^{t}\left(|\hat{G}|^2+|\widehat{\rho u}|^2\right)ds'd\xi+ CB_2   \\ \notag
		&~~~~+C_\delta\int_{S(t)}\int_{0}^{t}\int_{\mathbb{R}^d}\psi_\infty|\mathcal{F}(u\cdot\nabla g)|^2 dqds'd\xi \\ \notag
		&~~~~+C_\delta\int_{S(t)}\int_{0}^{t}\int_{\mathbb{R}^d}\psi_\infty|\mathcal{F}(\nabla u\cdot{q}g)|^2 dqds'd\xi.~~~~~~~~~~~~~~~~~~~~
	\end{align}
	It follows from Proposition \ref{prop0} that
	\begin{align}\label{4ineq15}
		\int_{S(t)} \left(|\hat{\rho}_0|^2+|\hat{u}_0|^2+\|\hat{g}_0\|^2_{\mathcal{L}^2}\right)d\xi
		&\leq\sum_{j\leq \log_2[\frac {4} {3}C_d^{\frac 1 2 }\left(\frac{f'(t)}{f(t)}\right)^{-\frac 1 2}]}\left(\|\dot{\Delta}_j (\rho_0,u_0)\|^2_{L^2}+\|\dot{\Delta}_j g_0\|^2_{L^2(\mathcal{L}^2)}\right) \\ \notag
		&\leq\sum_{j\leq \log_2[\frac {4} {3}C_d^{\frac 1 2 }\left(\frac{f'(t)}{f(t)}\right)^{-\frac 1 2}]}2^{jd}\left(\|(\rho_0,u_0)\|^2_{\dot{B}^{-\frac d 2}_{2,\infty}}+\|g_0\|^2_{\dot{B}^{-\frac d 2}_{2,\infty}(\mathcal{L}^2)}\right) \\ \notag
		&\leq C\left(\frac{f'(t)}{f(t)}\right)^{\frac d 2}\left(\|(\rho_0,u_0)\|^2_{\dot{B}^{-\frac d 2}_{2,\infty}}+\|g_0\|^2_{\dot{B}^{-\frac d 2}_{2,\infty}(\mathcal{L}^2)}\right)~.
	\end{align}
	Applying Minkowski's inequality and Theorem \ref{th1} lead to
	\begin{align}\label{4ineq16}
		\int_{S(t)}\int_{0}^{t}|\hat{G}|^2ds'd\xi
		&\leq C\int_{S(t)}d\xi \int_{0}^{t}\||\hat{G}|^2\|_{L^{\infty}}ds' \\ \notag
		&\leq C\left(\frac{f'(t)}{f(t)}\right)^{\frac d 2}\int_0^t\|(\rho,u)\|^2_{L^2}\left(\|\nabla (\rho,u)\|^2_{H^1} + \|\nabla g\|^2_{L^2(\mathcal{L}^2)}\right)ds ~~~~~~~~~~~~~~~ \\ \notag
		&\leq C\left(\frac{f'(t)}{f(t)}\right)^{\frac d 2}~.
	\end{align}
Similar to the above estimates, we obtain
	\begin{align}\label{4ineq17}
		\int_{S(t)}\int_{0}^{t}|\widehat{\rho u}|^2ds'd\xi
		&=\int_{0}^{t}\int_{S(t)}|\widehat{\rho u}|^2 d\xi ds'  \\ \notag
		&\leq C\int_{S(t)}d\xi \int_{0}^{t}\||\widehat{\rho u}|^2\|_{L^{\infty}}ds' \\ \notag
		&\leq C\left(\frac{f'(t)}{f(t)}\right)^{\frac d 2} B_1~.~~~~~~~~~~~~~~~~~~~~~~~~~~~
	\end{align}
	 Lemmas \ref{Lemma4} and Theorem \ref{th1} ensure that
	\begin{align}\label{4ineq18}
		\int_{S(t)}\int_{0}^{t}\int_{\mathbb{R}^d}\psi_\infty|\mathcal{F}(u\cdot\nabla g)|^2 &dqds'd\xi
		+\int_{S(t)}\int_{0}^{t}\int_{\mathbb{R}^d}\psi_\infty|\mathcal{F}(\nabla u\cdot{q}g)|^2 dqds'd\xi \\ \notag
		&\leq C\left(\frac{f'(t)}{f(t)}\right)^{\frac d 2} \int_{0}^{t}\|u\|^2_{L^{2}}\|\nabla g\|^2_{L^{2}(\mathcal{L}^{2})}+\|\nabla u\|^2_{L^{2}}\|\nabla_qg\|^2_{L^{2}(\mathcal{L}^{2})}ds' ~~~~ \\ \notag
		&\leq C\left(\frac{f'(t)}{f(t)}\right)^{\frac d 2}~.
	\end{align}
We thus complete the proof of Lemma \ref{decaylem1} .
\end{proof}

To obtain optimal $L^2$ decay rate, we have the following lemma.
\begin{lemm}\label{decaylem2}
	Let $d\geq3$ and $\alpha\in[\beta,\frac{d}{2}]$. Let the conditions of Theorem  \ref{th2} be fulfilled. If there exists positive time $T_0$ such that 
	$$
	(1+t)^{-\alpha}B_1 + B_2 \leq C(1+t)^{-\beta}~,~~~~
	$$
	for any $t>T_0$. Then it holds that
	\begin{align*}
		E_0(t) \leq C(1+t)^{-\beta}~.
	\end{align*}
\end{lemm}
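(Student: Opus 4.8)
The plan is to run a Fourier splitting argument on the combined energy functional $E_0(t)$. First I would establish the basic energy inequality for $E_0$: combining the lower-order estimate (Lemma \ref{low}) with the higher-order estimate (Lemma \ref{high}) and the micro-weight estimate (Lemma \ref{mix}), together with the equivalence $E_\lambda \sim E_{\eta,\lambda}$ proved in Proposition \ref{pro2}, one obtains a differential inequality of the form
\begin{align*}
\frac{d}{dt}E_0(t) + c\,D_0(t) \le 0
\end{align*}
for some $c>0$ (after absorbing all the nonlinear terms using the smallness $\sup_t E_\lambda(t)\le\epsilon$ from Theorem \ref{th1}). The dissipation $D_0$ controls $\|\nabla\rho\|_{H^{s-1}}^2$, $\|\nabla u\|_{H^s}^2$ and $\|\nabla_q g\|_{H^s(\mathcal L^2)}^2$; crucially, $\|\nabla_q g\|_{\mathcal L^2}^2 \gtrsim \|g\|_{\mathcal L^2}^2$ by the Poincaré-type bound in Lemma \ref{Lemma5}, so $D_0$ in fact controls the full $\|g\|_{H^s(\mathcal L^2)}^2$. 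Hence, on the high-frequency region $\mathbb R^d\setminus S(t)$ where $|\xi|^2 \ge C_d f'(t)/f(t)$, we can bound $E_0$ restricted to high frequencies by $(f(t)/f'(t))\,D_0(t)$ up to a constant, while $g$ contributes its full norm on all frequencies.

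Next, following the standard Schonbek scheme, I would write $E_0(t) = \int_{S(t)}(\cdots)\,d\xi + \int_{S(t)^c}(\cdots)\,d\xi$ for the $(\rho,u)$-part (plus the full $g$-part), and use the inequality above together with $f'(t)/f(t) = 1/(1+t)$ to get
\begin{align*}
\frac{d}{dt}E_0(t) + \frac{C_d}{1+t}E_0(t) \le \frac{C_d}{1+t}\int_{S(t)}\left(|\hat\rho|^2+|\hat u|^2+\|\hat g\|_{\mathcal L^2}^2\right)d\xi.
\end{align*}
Here the integral over $S(t)$ is exactly the quantity estimated in Lemma \ref{decaylem1}. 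Plugging that bound in, and using the hypothesis $(1+t)^{-\alpha}B_1 + B_2 \le C(1+t)^{-\beta}$ together with $(f'/f)^{d/2} = (1+t)^{-d/2} \le (1+t)^{-\beta}$ (valid since $\beta \le d/2$, and one also needs $\alpha \le d/2$ to make the $B_1$ term fit after multiplying by $(f'/f)^{d/2}$, which is where the hypotheses $\alpha\in[\beta,d/2]$ enter), the right-hand side becomes $\le C(1+t)^{-1-\beta}$. Multiplying by the integrating factor $(1+t)^{C_d}$ (choosing $C_d$ large enough, e.g. $C_d > \beta$), integrating from $T_0$ to $t$, and dividing back yields $E_0(t) \le C(1+t)^{-\beta}$, as claimed.

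The main obstacle is organizing the low-frequency ``source'' term correctly: one must verify that every contribution appearing in Lemma \ref{decaylem1}'s right-hand side — the Besov-norm term $(f'/f)^{d/2}(1+\|(\rho_0,u_0)\|_{\dot B^{-d/2}_{2,\infty}}^2 + \|g_0\|_{\dot B^{-d/2}_{2,\infty}(\mathcal L^2)}^2)$, the term $(f'/f)^{d/2}B_1$, and the term $B_2$ — when multiplied by the extra factor $1/(1+t)$ from the Fourier-splitting weight and integrated against the integrating factor, produces no worse than $(1+t)^{-\beta}$ decay. The Besov term gives $(1+t)^{-d/2} \le (1+t)^{-\beta}$ directly; the $B_1$ and $B_2$ terms are precisely controlled by the hypothesis, but one has to be careful that the power of $(1+t)$ multiplying $B_1$ is $-d/2 \le -\alpha$ so that $(1+t)^{-d/2}B_1 \le (1+t)^{-\alpha}B_1 \le C(1+t)^{-\beta}$. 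A secondary technical point is that Lemma \ref{decaylem1} is only valid for $t > T_0$, so the final integration must be split at $T_0$, with the contribution on $[0,T_0]$ absorbed into the constant $C$ using the uniform bound $\sup_t E_0(t) \lesssim \epsilon$ from Theorem \ref{th1}.
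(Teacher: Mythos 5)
Your proposal is correct, and it follows the same Fourier--splitting framework as the paper (energy inequality $\frac{d}{dt}E_0 + D_0\le 0$, Schonbek split on $S(t)$, Lemma \ref{decaylem1} for the low-frequency source, and $\alpha\le d/2$ to convert $(f'/f)^{d/2}B_1$ into $(1+t)^{-\alpha}B_1$). Where you diverge is in how the top-order $\rho$-term is absorbed. The paper's dissipation $D_0$ contains $\|\nabla\rho\|^2_{H^{s-1}}$, whose Fourier-split low bound \eqref{5ineq3} only produces $\tfrac{C_d}{1+t}$ times $\|\rho\|^2_{H^{s-1}}$ restricted to $S^c(t)$, i.e.\ the quantity $H_0$, \emph{not} the full $\|\rho\|^2_{H^s}$ piece of $E_0$; the $\|\Lambda^s\rho\|^2_{L^2}$ gap is then handled by multiplying by the weaker factor $(1+t)^{\beta+1}$ and running the Gr\"onwall-type iteration \eqref{5ineq8}--\eqref{5ineq10}. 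You instead assert that $E_0$ restricted to high frequencies is $\lesssim (1+t)D_0$ and close via the integrating factor $(1+t)^{C_d}$. That stronger assertion does in fact hold, but its justification requires the observation (not spelled out in your sketch) that $\|\nabla\rho\|^2_{H^{s-1}} = \|\Lambda^1\rho\|^2_{L^2}+\|\Lambda^s\rho\|^2_{L^2}$ already contains $\|\Lambda^s\rho\|^2_{L^2}$, and that $\|\Lambda^s\rho\|^2_{L^2}\ge \tfrac{C_d}{1+t}\|\Lambda^s\rho\|^2_{L^2}$ once $t\ge C_d-1$; this is precisely the step the paper avoids having to justify by retreating to $H_0$. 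With that one-line verification added, your route is valid and in fact a little more direct than the paper's, as it dispenses with the integration by parts and Gr\"onwall inequality of \eqref{5ineq8}--\eqref{5ineq9}. Both proofs need a large starting time $T_d$ (you for the absorption $\tfrac{C_d}{1+t}\le 1$, the paper in \eqref{5ineq8}) and both use the same handling of $g$ via the Poincar\'e bound of Lemma \ref{Lemma5}, so the two arguments are equivalent in substance.
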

\begin{proof}
	It follows from Propositions \ref{low} and \ref{high} that
	\begin{align}\label{5ineq1}
		\frac{d}{dt}E_0(t) + D_0 (t) \leq 0~.
	\end{align}
	Consider $S(t) = \{\xi: |\xi|^2 \leq \frac{C_d}{1+t}\}$ . According to Schonbek's strategy shown in \cite{Schonbek1985} , we get
	\begin{align}\label{5ineq2}
		\|\nabla u\|^2_{H^s} &= \int_{S(t)\cup S^c(t)} (1+|\xi|^{2s})|\xi|^2|\hat{u}(\xi)|^2d\xi~ \\ \notag
		& \geq \frac{C_d}{1+t}\int_{S^c(t)} (1+|\xi|^{2s})|\hat{u}(\xi)|^2d\xi~.
	\end{align}
	Similarly, we have
	\begin{align}\label{5ineq3}
		\|\nabla \rho\|^2_{H^{s-1}} \geq \frac{C_d}{1+t}\int_{S^c(t)} (1+|\xi|^{2{s-1}})|\hat{\rho}(\xi)|^2d\xi~,~~
	\end{align}
	from which one can deduce that
	\begin{align}\label{5ineq4}
		\frac{d}{dt}E_0(t) & + \frac{C_d}{1+t}H_0(t) + \|\nabla_qg\|^2_{H^s(\mathcal{L}^2)} \\ \notag
		& \leq \frac{CC_d}{1+t}\int_{S(t)} |\hat{\rho}(\xi)|^2+|\hat{u}(\xi)|^2d\xi~.~~~~~
	\end{align}
	According to Lemma \ref{decaylem1} and condition above, we obtain
	\begin{align}\label{5ineq5}
		\int_{S(t)} |\hat{\rho}(\xi)|^2+|\hat{u}(\xi)|^2d\xi \leq C(1+t)^{-\beta}.
	\end{align}
Consequently,
	\begin{align}\label{5ineq6}
		~~~~~~~~~~~~\frac{d}{dt}E_0(t) + \frac{C_d}{1+t}H_0(t) + \|\nabla_qg\|^2_{H^s(\mathcal{L}^2)} \leq C(1+t)^{-\beta-1}~,
	\end{align}
which implies
	\begin{align}\label{5ineq7}
		\frac d {dt}\left( (1+t)^{\beta+1}E_0(t) \right)&+C_d(1+t)^{\beta}H_0(t)
		+(1+t)^{\beta+1}\|\nabla_q g\|^2_{H^{s}(\mathcal{L}^{2})} \\ \notag
		&\leq CC_d +  (1+t)^{\beta}E_0(t)~.
	\end{align}
Choosing $C_d$ and $T_d$ sufficiently large, one can arrive at
	\begin{align}\label{5ineq8}
		(1+t)^{\beta+1}E_0(t)&\leq C(1+t)+C\int_{0}^{t}\|\Lambda^s \rho\|^2_{L^{2}}(1+s')^{\frac d 2}ds'  \\ \notag
		&\leq C(1+t)+C\int_{0}^{t}D_0(s')(1+s')^{\beta}ds'  \\  \notag
		&\leq C(1+t)+C\int_{0}^{t}E_0(s')(1+s')^{\beta-1}ds'~,~~~~~~~~~~~
	\end{align}
for any $t>T_d$. Define $N(t)=\mathop{\rm sup}\limits_{s\in[0,t]}(1+t)^{\beta}E_0(t)$ , we infer that
	\begin{align}\label{5ineq9}
	~~~~~~~~~~~~	N(t) \leq C + \int_0^t N(s)(1+s)^{-2}ds~,
	\end{align}
which implies
	\begin{align}\label{5ineq10}
		E_0(t) \leq C(1+t)^{-\beta}.~~~~~~
	\end{align}
	The proof of Lemma \ref{decaylem2} is completed.
\end{proof}

By virtue of Fourier spliting method, we obtian the initial decay rate in the following.
\begin{prop}\label{decaypro1}
	Let $d\geq3$. Under the condition of Theorem  \ref{th2} , there exists a constant $C$ such that 
	\begin{align*}
		E_0(t) \leq C(1+t)^{-\frac{d}{4}} ~~~\text{and}~~~E_1 \leq C(1+t)^{-\frac d 4-1}~.
	\end{align*}
for any  $t\in(0,\infty)$.
\end{prop}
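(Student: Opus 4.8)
The plan is to establish the two bounds by a finite induction on the decay exponent, feeding the Fourier splitting estimates of Lemmas \ref{decaylem1} and \ref{decaylem2} into one another on top of the basic energy inequality $\frac{d}{dt}E_0(t)+D_0(t)\le 0$ (derived as in Section 3 from Lemmas \ref{low} and \ref{high}, the nonlinear terms being absorbed by $D_0$ since $E_0\le\epsilon$), with the starting data $\sup_tE_0(t)\le\epsilon$ and $\int_0^\infty D_0(s)\,ds\le\epsilon$ supplied by Theorem \ref{th1}. The preliminary observation is that the two ``source'' factors $B_1,B_2$ are controlled by the current decay rate of $E_0$: because $G$ is a sum of products of two $L^2$ quantities, Lemmas \ref{Lemma4} and \ref{Lemma5} give $\|\widehat G(s)\|_{L^\infty_\xi}\le\|G(s)\|_{L^1_x}\lesssim\sqrt{E_0(s)}\,\sqrt{D_0(s)}$, so with $|S(t)|^{1/2}\sim(1+t)^{-d/4}$ and Plancherel,
\[
\int_{S(t)}\big|\widehat G(s)\cdot\overline{\widehat u(s)}\big|\,d\xi\ \lesssim\ (1+t)^{-d/4}\,E_0(s)\,\sqrt{D_0(s)}\,.
\]

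First I would insert a provisional rate $E_0(t)\le C(1+t)^{-\beta}$ into these. Using $B_1=\int_0^t\|(\rho,u)\|_{L^2}^4\,ds\le\int_0^tE_0(s)^2\,ds$, $B_2\le(1+t)^{-d/4}\int_0^tE_0(s)\sqrt{D_0(s)}\,ds$, a Cauchy--Schwarz in $s$ together with $\int_0^\infty D_0\le\epsilon$ yield $B_1\lesssim(1+t)^{\max\{1-2\beta,\,0\}}$ and $B_2\lesssim(1+t)^{-d/4+\frac12-\beta}$ (both improving to ``$B_1$ bounded and $B_2\lesssim(1+t)^{-d/4}$'' once $2\beta>1$). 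Plugging this into Lemma \ref{decaylem1} and then Lemma \ref{decaylem2} with $\alpha=\frac d2$ upgrades the rate from $\beta$ to $\beta'=\min\{\frac d2,\ \frac d2-1+2\beta,\ \frac d4-\frac12+\beta\}$; for $d\ge3$ one checks $\beta'>\beta$ whenever $\beta<\frac d4$, with increments bounded below, so starting from $\beta=0$ (Theorem \ref{th1}) the rate saturates at $\beta=\frac d4$ after finitely many iterations. The single borderline case $2\beta=1$ produces only a logarithm, which is killed either by taking a slightly weaker intermediate exponent or by a weighted Cauchy--Schwarz based on $\int_0^t(1+s)^{-\theta}D_0\,ds\le E_0(0)$; either way this gives $E_0(t)\le C(1+t)^{-d/4}$.

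For $E_1$ I would apply $\Lambda$ to the reformulated system \eqref{eq3} and derive $\frac{d}{dt}E_1(t)+D_1(t)\le 0$ exactly as for $E_0$, then run Fourier splitting on $S(t)=\{|\xi|^2\le C_d/(1+t)\}$: off $S(t)$ the dissipation dominates $\frac{C_d}{1+t}$ times the high-frequency part of $E_1$ (with $\|\nabla_q\Lambda g\|^2_{H^{s-1}(\mathcal{L}^2)}\gtrsim\|\Lambda g\|^2_{H^{s-1}(\mathcal{L}^2)}$ by Lemma \ref{Lemma5} and $\|\nabla\Lambda\rho\|^2_{H^{s-2}}$ carrying the top order of $\rho$), while on $S(t)$, $|\xi|^2\le C_d/(1+t)$, so the low-frequency remainder is at most
\[
\frac{C}{1+t}\int_{S(t)}\big(|\widehat\rho|^2+|\widehat u|^2+\|\widehat g\|^2_{\mathcal{L}^2}\big)\,d\xi\ \lesssim\ \frac{1}{1+t}(1+t)^{-d/4}
\]
by Lemma \ref{decaylem1} together with the bound $E_0\le C(1+t)^{-d/4}$ just obtained (which, since $d\ge3$, makes $B_1$ bounded and $B_2\lesssim(1+t)^{-d/4}$). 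Hence $\frac{d}{dt}E_1+\frac{C_d}{1+t}E_1\lesssim(1+t)^{-d/4-2}$, and multiplying by $(1+t)^{C_d}$ with $C_d$ chosen larger than $\frac d4+1$ (free in Schonbek's scheme) and integrating gives $E_1(t)\le C(1+t)^{-d/4-1}$, with no logarithmic loss.

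The step I expect to be the main obstacle is the control of $B_2$. For a general potential $\mathcal U$ there is no $\|(\rho,u)\|_{L^1}$ bound, so $\widehat u$ carries no uniform-in-$\xi$ estimate, $B_2$ cannot be discarded, and it must be propagated through the entire induction; it is precisely what forces the multi-step scheme (each pass gaining only $\frac d4-\frac12$ until $2\beta$ crosses $1$) rather than a one-shot argument, and its unremovable prefactor $|S(t)|^{1/2}\sim(1+t)^{-d/4}$ is exactly what pins this first pass down to the sub-optimal rate $(1+t)^{-d/4}$ for $E_0$ (the optimal $(1+t)^{-d/2}$ being recovered only afterwards, via the negative Besov bound and interpolation). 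Getting the bookkeeping of the weighted Cauchy--Schwarz of $\int_0^tE_0\sqrt{D_0}\,ds$ right, so that the induction closes for every $d\ge3$ including the borderline exponents, is the part I would write out in full.
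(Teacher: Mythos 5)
Your proposal is correct and follows essentially the same route as the paper: a finite bootstrap in which Lemma \ref{decaylem1} feeds Lemma \ref{decaylem2}, improving the provisional exponent $\beta$ from $0$ up to $d/4$ by tracking the decay of $B_1$ and $B_2$, followed by the routine Schonbek splitting applied to $\frac{d}{dt}E_1+D_1\le 0$ to obtain the extra $(1+t)^{-1}$ for $E_1$. The only difference is cosmetic: you fix $\alpha=d/2$ in every pass, while the paper varies $\alpha$ along the explicit ladder $\beta\in\{\frac d4-\frac12,\frac d4-\frac14,\frac d4-\frac18,\frac d4\}$, a choice made precisely to sidestep the borderline $2\beta=1$ logarithm you flag; both devices close the induction for all $d\ge3$.
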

\begin{proof}
	We infer from Theroem \ref{th1} that
	\begin{align}\label{6ineq1}
		B_1 &\leq C(1+t)^1~,~~~~~~
	\end{align}
	and
	\begin{align}\label{6ineq2}
		B_2 &\leq C(1+t)^{-\frac{d}{4}} \int_{0}^{t}(\|u\|^2_{L^{2}}+\|\rho\|^2_{L^{2}})D(s')^{\frac 1 2}ds' ~~~\\ \notag
		&\leq C(1+t)^{-\frac{d}{4} + \frac 1 2}~.
	\end{align}
	It follows from Lemma \ref{decaylem2} with $\alpha = \frac{d}{4} + \frac{1}{2}$ and $\beta = \frac{d}{4} - \frac{1}{2}$ that
	\begin{align}\label{6ineq3}
		E_0(t)\leq C(1+t)^{-\frac d 4+\frac 1 2}~.~~~
	\end{align}
	Then we improve the decay rate by estimating $B_1$ and $B_2$ again. According to \eqref{6ineq3} , we have
	$$
	(1+t)^{-\frac{d}{4}-\frac{1}{4}}B_1 + B_2 \leq C(1+t)^{-\frac{d}{4}+\frac{1}{4}} .~~~~~~~~~
	$$
	According to Lemma \ref{decaylem2} with $\alpha = \frac{d}{4} + \frac{1}{4}$ and $\beta = \frac{d}{4} - \frac{1}{4}$, one can arrive at
	\begin{align}\label{6ineq4}
		E_0(t)\leq C(1+t)^{-\frac d 4+\frac{1}{4}}~.~~
	\end{align}
	Reestimating $B_1$ and $B_2$ with $\alpha = \frac{d}{4} + \frac{1}{8}$ and $\beta = \frac{d}{4} - \frac{1}{8}$, we deduce that
	\begin{align}\label{6ineq5}
		E_0(t)\leq C(1+t)^{-\frac d 4+\frac{1}{8}}.~~~
	\end{align}
Reestimating $B_1$ and $B_2$ with $\alpha = \beta = \frac{d}{4}$,  we finally infer that
	\begin{align}\label{6ineq6}
		E_0(t)\leq C(1+t)^{-\frac d 4}~.~~~~~
	\end{align}
	Similar to the proof of Proposition \ref{low} and \ref{high} , we get
	\begin{align}\label{6ineq7}
		\frac d {dt} E_1(t)+D_1 (t)\leq 0~.~~~~~~~~~~~
	\end{align}
	According to Schonbek's strategy and \eqref{6ineq7} , we have
	\begin{align}\label{6ineq8}
		\frac d {dt} E_1(t)+\frac { C_d} {1+t}H_1(t)
		+\|\Lambda^1 \nabla_q g\|^2_{H^{s-1}(\mathcal{L}^{2})} 
		\leq \frac {CC_d} {1+t}\int_{S(t)}|\xi|^2\left(|\hat{u}(\xi)|^2+|\hat{\rho}(\xi)|^2\right) d\xi,
	\end{align}
with
\begin{align}\label{6ineq9}
	\frac {CC_d} {1+t}\int_{S(t)}|\xi|^2\left(|\hat{u}(\xi)|^2+|\hat{\rho}(\xi)|^2\right) d\xi&\leq C{C_d}^2 (1+t)^{-2-\frac{d}{4}}.~~~~~~~
\end{align}
By performing a routine procedure, one can arrive at
	$$
	E_1(t) \leq C(1+t)^{-\frac d 4-1}~.~~~
	$$
	This completes the proof of Proposition \ref{decaypro1} .
\end{proof}
\begin{rema}
The proof of Proposition \ref{decaypro1} ensures the boundness of $B_1$. Thus the main obstacle to obtain optimal decay rate is $B_2$. Without $u\in L^1$, the estimate of $B_2$ shown in \eqref{6ineq2} definitely cause the loss of $\frac{d}{8}$ decay rate. However, the $L^2$ decay rate obtianed in Proposition \ref{decaypro1} guarantees $u \in \dot{B}^{-\frac{d}{2}}_{2,\infty}$ with $d\geq3$. In reward, $L^1 \hookrightarrow \dot{B}^{-\frac{d}{2}}_{2,\infty}$ helps to improve the $L^2$ decay rate obtained initially.
\end{rema}
Next, we will prove the solutions belong to some Besov space with negative index.
\begin{lemm}\label{decaylem3}
	Let $d\geq3$. Under the conditions of Theorem  \ref{th2} , it holds that
	\begin{align*}
		(\rho,u,g)\in L^{\infty}(0,\infty;\dot{B}^{-\frac{d}{2}}_{2,\infty})\times L^{\infty}(0,\infty;\dot{B}^{-\frac{d}{2}}_{2,\infty})\times L^{\infty}(0,\infty;\dot{B}^{-\frac{d}{2}}_{2,\infty}(\mathcal{L}^2)).
	\end{align*}
	\begin{proof}
		Applying $\dot{\Delta}_j$ to system \eqref{eq3}, we infer that
		\begin{align}\label{7ineq1}
			\left\{
			\begin{array}{ll}
				\dot{\Delta}_j\rho_t+{\rm div}\dot{\Delta}_j u=\dot{\Delta}_j F,  \\[1ex]
				\dot{\Delta}_j u_t-{\rm div}\Sigma(\dot{\Delta}_j u)+\gamma\nabla\dot{\Delta}_j \rho-{\rm div}\dot{\Delta}_j\tau=\dot{\Delta}_j G,  \\[1ex]
				\dot{\Delta}_j g_t+\mathcal{L}\dot{\Delta}_j g-\nabla\dot{\Delta}_j u q_j \partial_{q_k}\mathcal{U}+{\rm div} \dot{\Delta}_j u=\dot{\Delta}_j H.~~ \\[1ex]
			\end{array}
			\right.
		\end{align}
		According to $\int_{B} \dot{\Delta}_j g\psi_\infty dR=0$ , we deduce that
		\begin{align}\label{7ineq2}
			&\frac 1 2 \frac d {dt}\left(\gamma\|\dot{\Delta}_j \rho\|^2_{L^2}+\|\dot{\Delta}_j u\|^2_{L^2}+\|\dot{\Delta}_j g\|^2_{L^2(\mathcal{L}^{2})}\right) \\ \notag
			&~~~~+\mu\|\nabla\dot{\Delta}_j u\|^2_{L^2}+(\mu+\mu')\|{\rm div} \dot{\Delta}_j u\|^2_{L^2}+\|\nabla_q \dot{\Delta}_j g\|^2_{L^2(\mathcal{L}^{2})}   \\ \notag
			&=\int_{\mathbb{R}^{2}} \gamma\dot{\Delta}_j F\dot{\Delta}_j \rho dx+\int_{\mathbb{R}^{2}} \dot{\Delta}_j G\dot{\Delta}_j u dx+\int_{\mathbb{R}^{2}}\int_{\mathcal{R}^d} \dot{\Delta}_j H\dot{\Delta}_j g \psi_\infty dxdq   \\ \notag
			&\leq C\left(\|\dot{\Delta}_j F\|_{L^2}\|\dot{\Delta}_j \rho\|_{L^2}+\|\dot{\Delta}_j G\|_{L^2}\|\dot{\Delta}_j u\|_{L^2}\right) \\ \notag
			&~~~~+C\left(\|\dot{\Delta}_j (u\nabla g)\|^2_{L^2(\mathcal{L}^{2})}+\|\dot{\Delta}_j (\nabla uqg)\|^2_{L^2(\mathcal{L}^{2})}\right)+\frac 1 2 \|\nabla_q \dot{\Delta}_j g\|^2_{L^2(\mathcal{L}^{2})} .~~~~~~~~~~~~~~~~~~~~
		\end{align}
		Multiplying $2^{-dj}$ by \eqref{7ineq2} and taking $l^\infty$ norm with respect to $j\in Z$, we obtain
		\begin{align}\label{7ineq3}
			\frac d {dt}\left(\gamma\|\rho\|^2_{\dot{B}^{-\frac{d}{2}}_{2,\infty}}+\|u\|^2_{\dot{B}^{-\frac{d}{2}}_{2,\infty}}+\|g\|^2_{\dot{B}^{-\frac{d}{2}}_{2,\infty}(\mathcal{L}^{2})}\right)  &\lesssim \|F\|_{\dot{B}^{-\frac{d}{2}}_{2,\infty}}\|\rho\|_{\dot{B}^{-\frac{d}{2}}_{2,\infty}}+\|G\|_{\dot{B}^{-\frac{d}{2}}_{2,\infty}}\|u\|_{\dot{B}^{-\frac{d}{2}}_{2,\infty}}\\ \notag
			&~~~~+ \|\nabla uqg\|^2_{\dot{B}^{-\frac{d}{2}}_{2,\infty}(\mathcal{L}^{2})}+\|u\nabla g\|^2_{\dot{B}^{-\frac{d}{2}}_{2,\infty}(\mathcal{L}^{2})}~.
		\end{align}
		Define $M(t)=\mathop{\sum}\limits_{0\leq s'\leq t} \|\rho\|_{\dot{B}^{-\frac{d}{2}}_{2,\infty}}+\|u\|_{\dot{B}^{-\frac{d}{2}}_{2,\infty}}+\|g\|_{\dot{B}^{-\frac{d}{2}}_{2,\infty}(\mathcal{L}^{2})}$ . According to  \eqref{7ineq3}, we get
		\begin{align}\label{7ineq4}
			M^2(t)&\leq M^2(0)+CM(t)\int_0^{t}\|F\|_{\dot{B}^{-\frac{d}{2}}_{2,\infty}}+\|G\|_{\dot{B}^{-\frac{d}{2}}_{2,\infty}}ds'  \\  \notag
			&~~~~+C\int_0^{t}\|\nabla uqg\|^2_{\dot{B}^{-\frac{d}{2}}_{2,\infty}(\mathcal{L}^{2})}+\|u\nabla g\|^2_{\dot{B}^{-\frac{d}{2}}_{2,\infty}(\mathcal{L}^{2})}ds'~.~~~~~~~~~~~~~~
		\end{align}
		Lemmas \ref{Lemma4} , \ref{Lemma5} and Proposition \ref{decaypro1} ensure that the following estimates hold :
		\begin{align}\label{7ineq5}
			\int_0^{t}\|\nabla uqg\|^2_{\dot{B}^{-\frac{d}{2}}_{2,\infty}(\mathcal{L}^{2})}&+\|u\nabla g\|^2_{\dot{B}^{-\frac{d}{2}}_{2,\infty}(\mathcal{L}^{2})}ds'~~~~~~~~~~~~~~~~~~~~~~~~~~~~~~~~~~~~~~~~~~~~~~~~~~~~\\ \notag
			&\leq C\int_0^{t}\|\nabla uqg\|^2_{L^{1}(\mathcal{L}^{2})}+\|u\nabla g\|^2_{L^{1}(\mathcal{L}^{2})}ds'  \\ \notag
			&\leq C\int_0^{t}\|\nabla u\|^2_{L^2}\|\langle q\rangle g\|^2_{L^{2}(\mathcal{L}^{2})}+\|\langle q\rangle\nabla_q\nabla g\|^2_{L^2(\mathcal{L}^{2})}\|u\|^2_{L^{2}}ds' \\ \notag
			&\leq C\int_0^{t}E_0D_0ds'\leq C~,
		\end{align}
		and
		\begin{align}\label{7ineq6}
			\int_0^{t}\|F\|_{\dot{B}^{-\frac{d}{2}}_{2,\infty}}ds'
			&\leq C\int_0^{t}\|F\|_{L^{1}}ds'  \\ \notag
			&\leq C\int_0^{t}\|u\|_{L^2}\|\nabla\rho\|_{L^2}+\|{\rm div}u\|_{L^2}\|\rho\|_{L^2}ds' ~~~~~~~~~~~~~~~~~~~  \\ \notag
			&\leq C\int_0^{t}(1+s')^{-(\frac{1}{2}+\frac{d}{4})}ds'\leq C~.
		\end{align}
		Analogously,
		\begin{align}\label{7ineq7}
			\int_0^{t}\|G\|_{\dot{B}^{-\sigma}_{2,\infty}}ds'
			&\leq C\int_0^{t}(1+s')^{-(\frac{1}{2}+\frac{d}{4})}ds'+C\int_0^{t}\|{\rm div}\tau\|_{L^2}\|\rho\|_{L^2}ds' ~~~ \\ \notag
			&\leq C+C\int_0^{t}\|\rho\|_{L^2}\|\nabla_q\nabla g\|_{L^2(\mathcal{L}^{2})}ds'  \\ \notag
			&\leq C+C\int_0^{t}(1+s')^{-(\frac{1}{2}+\frac{d}{4})}ds' \leq C~.~~~~~~~~~~~~~~~~~~~~~~~~~~~
		\end{align}
		Combining estimates \eqref{7ineq5}-\eqref{7ineq7} , we get $ M^2(t)\leq CM^2(0)+M(t)C+C$, which implies $M(t)\leq C$. This complete the proof of Lemma \ref{decaylem3} .
	\end{proof}
\end{lemm}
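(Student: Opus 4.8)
This is a propagation statement: the hypotheses of Theorem~\ref{th2} already place $(\rho_0,u_0,g_0)$ in $\dot{B}^{-\frac{d}{2}}_{2,\infty}\times\dot{B}^{-\frac{d}{2}}_{2,\infty}\times\dot{B}^{-\frac{d}{2}}_{2,\infty}(\mathcal{L}^2)$, and the plan is to show that this norm stays bounded for all $t$ by repeating the energy computation behind Lemma~\ref{decaylem1} in frequency-localized form. I work with the reformulated system \eqref{eq3}. First I apply the block $\dot\Delta_j$ to each of its three equations, take the $L^2$ pairing of the $\rho$-equation with $\gamma\dot\Delta_j\rho$, the $L^2$ pairing of the $u$-equation with $\dot\Delta_j u$, and the $L^2(\mathcal{L}^2)$ pairing $\langle\cdot,\cdot\rangle$ of the $g$-equation with $\dot\Delta_j g$, and add. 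Exactly as in the derivation of \eqref{4ineq5}, the three coupling pairs cancel: $\gamma\langle{\rm div}\,\dot\Delta_j u,\dot\Delta_j\rho\rangle$ cancels $\gamma\langle\nabla\dot\Delta_j\rho,\dot\Delta_j u\rangle$ after integration by parts in $x$; the $-{\rm div}\,\dot\Delta_j\tau$ term pairs with $-\nabla\dot\Delta_j u\,q\nabla_q\mathcal{U}$ and cancels because $\tau_{ij}(g)=\int q_i\nabla_{q_j}\mathcal{U}\,g\,\psi_\infty\,dq$; and $\langle{\rm div}\,\dot\Delta_j u,\dot\Delta_j g\rangle=0$ since $\int\dot\Delta_j g\,\psi_\infty\,dq=0$. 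What remains is a block identity with dissipation $\mu\|\nabla\dot\Delta_j u\|^2_{L^2}+(\mu+\mu')\|{\rm div}\,\dot\Delta_j u\|^2_{L^2}+\|\nabla_q\dot\Delta_j g\|^2_{L^2(\mathcal{L}^2)}$ on the left and only the source contributions from $F$, $G$, $H$ on the right; for the $H$-term I integrate by parts in $q$ and, using the weighted Poincaré inequality $\|\dot\Delta_j g\|_{\mathcal{L}^2}\lesssim\|\nabla_q\dot\Delta_j g\|_{\mathcal{L}^2}$ of Lemma~\ref{Lemma5}, absorb a small multiple of $\|\nabla_q\dot\Delta_j g\|^2_{L^2(\mathcal{L}^2)}$ on the left, keeping only $\|\dot\Delta_j(u\cdot\nabla g)\|^2_{L^2(\mathcal{L}^2)}$ and $\|\dot\Delta_j(\nabla u\,qg)\|^2_{L^2(\mathcal{L}^2)}$ on the right.

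Next I multiply the block identity by $2^{-dj}$ and take the supremum over $j\in\mathbb{Z}$; since the supremum of a sum of products is dominated by the product of the suprema, this produces a differential inequality for $\gamma\|\rho\|^2_{\dot{B}^{-\frac{d}{2}}_{2,\infty}}+\|u\|^2_{\dot{B}^{-\frac{d}{2}}_{2,\infty}}+\|g\|^2_{\dot{B}^{-\frac{d}{2}}_{2,\infty}(\mathcal{L}^2)}$ whose right-hand side is $\lesssim\|F\|_{\dot{B}^{-\frac{d}{2}}_{2,\infty}}\|\rho\|_{\dot{B}^{-\frac{d}{2}}_{2,\infty}}+\|G\|_{\dot{B}^{-\frac{d}{2}}_{2,\infty}}\|u\|_{\dot{B}^{-\frac{d}{2}}_{2,\infty}}+\|\nabla u\,qg\|^2_{\dot{B}^{-\frac{d}{2}}_{2,\infty}(\mathcal{L}^2)}+\|u\cdot\nabla g\|^2_{\dot{B}^{-\frac{d}{2}}_{2,\infty}(\mathcal{L}^2)}$. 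Setting $M(t)=\sup_{0\le s\le t}\big(\|\rho\|_{\dot{B}^{-\frac{d}{2}}_{2,\infty}}+\|u\|_{\dot{B}^{-\frac{d}{2}}_{2,\infty}}+\|g\|_{\dot{B}^{-\frac{d}{2}}_{2,\infty}(\mathcal{L}^2)}\big)$ and integrating in time gives
\begin{align*}
M^2(t)&\le M^2(0)+CM(t)\int_0^t\big(\|F\|_{\dot{B}^{-\frac{d}{2}}_{2,\infty}}+\|G\|_{\dot{B}^{-\frac{d}{2}}_{2,\infty}}\big)\,ds\\
&\quad+C\int_0^t\big(\|\nabla u\,qg\|^2_{\dot{B}^{-\frac{d}{2}}_{2,\infty}(\mathcal{L}^2)}+\|u\cdot\nabla g\|^2_{\dot{B}^{-\frac{d}{2}}_{2,\infty}(\mathcal{L}^2)}\big)\,ds.
\end{align*}

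To close, I estimate the time integrals by embedding every source into $L^1$ via $L^1\hookrightarrow\dot{B}^{-\frac{d}{2}}_{2,\infty}$ (Lemma~\ref{lemma1}), converting $q$-weights into $\nabla_q$ through Lemmas~\ref{Lemma4}--\ref{Lemma5}, and invoking the decay rates $E_0(t)\lesssim(1+t)^{-\frac{d}{4}}$, $E_1(t)\lesssim(1+t)^{-\frac{d}{4}-1}$ of Proposition~\ref{decaypro1}. Each of $\|F\|_{L^1}$, $\|G\|_{L^1}$ is a finite sum of products of $\|(\rho,u)\|_{L^2}$ with a first-order norm of $(\rho,u)$ or of ${\rm div}\,\tau\sim\nabla_q\nabla g$, hence decays at least like $(1+s)^{-\frac{1}{2}-\frac{d}{4}}$ (the ${\rm div}\,\tau$ piece being handled after a Cauchy--Schwarz in time against the integrated dissipation of Theorem~\ref{th1}), which is integrable precisely because $d\ge3$; similarly $\|\nabla u\,qg\|^2_{L^1(\mathcal{L}^2)}+\|u\cdot\nabla g\|^2_{L^1(\mathcal{L}^2)}\lesssim E_0(s)D_0(s)$, whose integral is at most $(\sup_s E_0)\int_0^\infty D_0\,ds<\infty$. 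Consequently $M^2(t)\le M^2(0)+CM(t)+C$, and solving this quadratic inequality yields $M(t)\le C$ uniformly in $t$, which is the assertion.

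The substantive difficulty is not the computation but ensuring that every time integral of a nonlinear source is \emph{finite} uniformly in $t$: this is exactly where the restriction $d\ge3$ enters (so that $\frac{1}{2}+\frac{d}{4}>1$) and where the polynomial decay of Proposition~\ref{decaypro1} together with the global integrability $\int_0^\infty D_0\,ds<\infty$ of Theorem~\ref{th1} are indispensable. A secondary care point is that the $g$-equation furnishes coercivity only for $\nabla_q\dot\Delta_j g$ and not for $\dot\Delta_j g$ itself, so the transport term $u\cdot\nabla g$ must be routed through the weighted Poincaré inequality before the estimate can be closed.
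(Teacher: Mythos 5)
Your proposal reproduces the paper's argument step for step: apply $\dot\Delta_j$ to \eqref{eq3}, take $L^2$ and $L^2(\mathcal{L}^2)$ pairings so the three coupling terms cancel, absorb the $g$-transport via the weighted Poincar\'e inequality, multiply by $2^{-dj}$ and pass to $\ell^\infty_j$, define the same supremum $M(t)$, bound $\|F\|$, $\|G\|$, $\|\nabla u\,qg\|$, $\|u\cdot\nabla g\|$ through $L^1\hookrightarrow\dot B^{-\frac d2}_{2,\infty}$ and Proposition~\ref{decaypro1}, and close with the quadratic inequality $M^2\le M^2(0)+CM+C$. The overall structure and every load-bearing step match \eqref{7ineq1}--\eqref{7ineq7}.

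One parenthetical suggestion, however, would not survive scrutiny as stated: for the ${\rm div}\,\tau$ contribution to $\int_0^t\|G\|_{L^1}\,ds$ you offer, as an alternative to the pointwise decay, a Cauchy--Schwarz in time paired with the global dissipation bound of Theorem~\ref{th1}. That factorizes the integral as $\left(\int_0^t\|\rho\|^2_{L^2}\,ds\right)^{1/2}\left(\int_0^t\|\nabla_q\nabla g\|^2_{L^2(\mathcal{L}^2)}\,ds\right)^{1/2}$; the second factor is finite, but the first involves $\int_0^t(1+s)^{-d/4}\,ds$, which diverges for $d=3$ and $d=4$, i.e.\ precisely the physically relevant cases this lemma must cover. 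The estimate that actually works---and which you also assert---is the pointwise one the paper uses: $\|\rho\|_{L^2}\lesssim(1+s)^{-d/8}$ and $\|{\rm div}\,\tau\|_{L^2}\lesssim\|\nabla g\|_{L^2(\mathcal{L}^2)}\lesssim E_1(s)^{1/2}\lesssim(1+s)^{-d/8-1/2}$ by Lemma~\ref{Lemma5} and Proposition~\ref{decaypro1}, giving the integrable rate $(1+s)^{-d/4-1/2}$ for all $d\ge3$. Deleting the Cauchy--Schwarz remark leaves your proof identical in substance to the paper's.
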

\begin{prop}[Large time behaviour]\label{decaythe1}
	Let $d\geq 3$. Let $(\rho,u,g)$ be a global strong solution of system \eqref{eq1} considered in Theorem \ref{th2} . In addition, if $(\rho_0,u_0)\in \dot{B}^{-\frac{d}{2}}_{2,\infty}\times \dot{B}^{-\frac{d}{2}}_{2,\infty}$ and $g_0 \in \dot{B}^{-\frac{d}{2}}_{2,\infty}(\mathcal{L}^2)$, then there exists a constant $C$ such that
	\begin{align*}
		E_0(t) \leq C(1+t)^{-\frac d 2}~~~\text{and}~~~~E_1(t) \leq C(1+t)^{-\frac d 2-1}.
	\end{align*}
\end{prop}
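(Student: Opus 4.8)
The plan is to upgrade the $L^2$ decay obtained in Proposition \ref{decaypro1} by re-running the Fourier--splitting scheme of Lemmas \ref{decaylem1} and \ref{decaylem2}, this time feeding in the uniform negative Besov bound $\|(\rho,u,g)(t)\|_{\dot{B}^{-\frac{d}{2}}_{2,\infty}}\le C$ supplied by Lemma \ref{decaylem3}. The decisive point is that one may now control the low-frequency mass of $u$ by the volume of $S(t)$ rather than by $\|u\|_{L^2}$: since $S(t)=\{\xi:\,|\xi|^2\le C_d/(1+t)\}$, the Littlewood--Paley computation already performed in \eqref{4ineq15} yields, for every $s'\le t$,
\[
\int_{S(t)}|\hat{u}(s')|^2\,d\xi\ \lesssim\ \Big(\frac{f'(t)}{f(t)}\Big)^{\frac{d}{2}}\|u(s')\|^2_{\dot{B}^{-\frac{d}{2}}_{2,\infty}}\ \lesssim\ (1+t)^{-\frac{d}{2}},
\]
which is a full factor $(1+t)^{-\frac{d}{4}}$ stronger than the crude bound $\int_{S(t)}|\hat{u}(s')|^2\,d\xi\le\|u(s')\|^2_{L^2}$ used in Proposition \ref{decaypro1}.

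Next I would revisit the two quantities driving Lemma \ref{decaylem2}. For $B_1$, Proposition \ref{decaypro1} already gives $\|(\rho,u)\|^2_{L^2}\lesssim(1+t)^{-\frac{d}{4}}$, so $B_1=\int_0^t\|(\rho,u)\|^4_{L^2}\,ds\lesssim\int_0^t(1+s)^{-\frac{d}{2}}\,ds\le C$ since $d\ge3$, and this will remain bounded throughout. For $B_2$, I split $\int_{S(t)}|\hat{G}\cdot\bar{\hat{u}}|\,d\xi\le\|\hat{G}\|_{L^\infty}|S(t)|^{\frac{1}{2}}(\int_{S(t)}|\hat{u}|^2\,d\xi)^{\frac{1}{2}}$, bound $\|\hat{G}\|_{L^\infty}\le\|G\|_{L^1}\lesssim\|(\rho,u)\|_{L^2}D_0^{\frac{1}{2}}$ exactly as in \eqref{6ineq2} (Lemma \ref{Lemma5} handling the ${\rm div}\,\tau$ contribution), and combine $|S(t)|^{\frac{1}{2}}\lesssim(1+t)^{-\frac{d}{4}}$ with the refined estimate above to obtain
\[
B_2\ \lesssim\ (1+t)^{-\frac{d}{2}}\int_0^t\|(\rho,u)(s')\|_{L^2}\,D_0(s')^{\frac{1}{2}}\,ds'.
\]

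Now suppose $E_0(s')\lesssim(1+s')^{-\beta}$ is already known for some $\beta\in[\frac{d}{4},\frac{d}{2}]$. Then Cauchy--Schwarz together with $\int_0^tD_0\,ds'\le E_0(0)\le C$ (from $\frac{d}{dt}E_0+D_0\le0$) gives $\int_0^t(1+s')^{-\frac{\beta}{2}}D_0(s')^{\frac{1}{2}}\,ds'\lesssim(1+t)^{\max\{0,(1-\beta)/2\}}$, hence $(1+t)^{-\frac{d}{2}}B_1+B_2\lesssim(1+t)^{-\beta'}$ with $\beta'=\frac{d}{2}-\max\{0,(1-\beta)/2\}$; applying Lemma \ref{decaylem2} with $\alpha=\frac{d}{2}$ then upgrades the rate from $\beta$ to $\beta'$. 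Starting from $\beta_0=\frac{d}{4}$ (Proposition \ref{decaypro1}), the recursion $\beta_{n+1}=\frac{d}{2}-\max\{0,(1-\beta_n)/2\}$ attains a value $\ge1$ after at most one step ($\beta_1=\frac{d}{2}$ already when $d\ge4$, and $\beta_1=\frac{11}{8}>1$ when $d=3$), after which the loss term vanishes and $\beta_{n+1}=\frac{d}{2}$; therefore $E_0(t)\lesssim(1+t)^{-\frac{d}{2}}$. The bound on $E_1$ then follows the template of \eqref{6ineq7}--\eqref{6ineq9}: from $\frac{d}{dt}E_1+D_1\le0$ and Schonbek's splitting one has $\frac{d}{dt}E_1+\frac{C_d}{1+t}H_1+\|\Lambda^1\nabla_q g\|^2_{H^{s-1}(\mathcal{L}^2)}\lesssim\frac{C_d}{1+t}\int_{S(t)}|\xi|^2(|\hat{u}|^2+|\hat{\rho}|^2)\,d\xi\lesssim\frac{C_d^2}{(1+t)^2}\|(\rho,u)\|^2_{L^2}\lesssim(1+t)^{-2-\frac{d}{2}}$, and the standard weighted-energy argument yields $E_1(t)\lesssim(1+t)^{-\frac{d}{2}-1}$.

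The step I expect to be the main obstacle is closing this bootstrap for $B_2$: one must verify that the loss $\max\{0,(1-\beta)/2\}$ in the decay exponent genuinely shrinks to $0$ after finitely many iterations while $B_1$ stays bounded. This is precisely what the extra factor $(1+t)^{-\frac{d}{2}}$, extracted at the outset from the propagated negative Besov bound, makes possible; without it the recursion would stall at $\beta=\frac{d}{4}$, the barrier already met in Proposition \ref{decaypro1}.
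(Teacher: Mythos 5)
Your proof is correct and follows the same Fourier--splitting framework as the paper's. The decisive observation -- replacing the crude bound $\int_{S(t)}|\hat u|^2\,d\xi\le\|u\|^2_{L^2}$ by the sharp low-frequency bound $\int_{S(t)}|\hat u|^2\,d\xi\lesssim(1+t)^{-d/2}\|u\|^2_{\dot B^{-d/2}_{2,\infty}}$ coming from the propagated negative Besov norm of Lemma~\ref{decaylem3} -- is exactly the content of \eqref{8ineq2}, and you correctly identify it as the mechanism that removes the $d/8$ loss of Proposition~\ref{decaypro1}.

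The one place where you genuinely depart from the paper is the closing of $B_2$. After obtaining $B_2\lesssim(1+t)^{-d/2}\int_0^t\|(\rho,u)\|_{L^2}D_0^{1/2}\,ds'$, you Cauchy--Schwarz the time integral against $\int_0^tD_0\,ds'\le C$ and absorb the resulting $(1+t)^{\max\{0,(1-\beta)/2\}}$ loss through a bootstrap on the exponent $\beta$. The paper instead notices that the first-derivative decay $E_1\lesssim(1+t)^{-\frac d4-1}$ from Proposition~\ref{decaypro1} already gives $\|G\|_{L^1}\lesssim\|(\rho,u)\|_{L^2}\bigl(\|\nabla(\rho,u)\|_{H^1}+\|\nabla g\|_{L^2(\mathcal{L}^2)}\bigr)\lesssim(1+s')^{-\frac d4-\frac12}$, which is time-integrable for $d\ge3$; this yields $B_2\lesssim(1+t)^{-d/2}$ in a single pass and Lemma~\ref{decaylem2} is invoked once (see \eqref{8ineq3}--\eqref{8ineq5}). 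Both closures are valid. Your version avoids re-using the $E_1$ decay at the cost of one extra iteration when $d=3$ (and the extra verification that the recursion $\beta_{n+1}=\frac d2-\max\{0,(1-\beta_n)/2\}$ stabilizes), whereas the paper's is a one-step argument since it spends the already-available gradient decay. The $E_1$ conclusion is obtained identically to \eqref{8ineq6}--\eqref{8ineq7} in both versions.

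One small caveat worth making explicit in your write-up: the Cauchy--Schwarz bound $\int_0^t(1+s')^{-\beta/2}D_0^{1/2}\,ds'\lesssim(1+t)^{\max\{0,(1-\beta)/2\}}$ picks up a logarithmic factor if $\beta=1$; this does not occur along your particular iterate sequence (for $d=3$ one has $\beta_0=\tfrac34$, $\beta_1=\tfrac{11}{8}$), but it should be noted so the reader sees the recursion never lands on the borderline exponent.
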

\begin{proof}
	We infer from Proposition \ref{decaypro1} that
	\begin{align}\label{8ineq1}
		(1+t)^{-\frac{d}{2}}B_1 &\leq C(1+t)^{-\frac{d}{2}} \int_{0}^{t}(1+s')^{-\frac{d}{2}}ds'~~~~~~~~~~\\ \notag
		&\leq C(1+t)^{-\frac{d}{2}}.
	\end{align}
	One can get from Proposition \ref{prop0} that
	\begin{align}\label{8ineq2}
		\int_{S(t)}|\hat{u}|^2 d\xi &\leq
		\sum_{j\leq \log_2[\frac {4} {3}C_d^{\frac 1 2 }\left(1+t\right)^{-\frac 1 2}]}\|\dot{\Delta}_j u_0\|^2_{L^2}~~~~~\\ \notag 
		&\leq   \sum_{j\leq \log_2[\frac {4} {3}C_d^{\frac 1 2 }\left(1+t\right)^{-\frac 1 2}]} 2^{d j}M^2(t) ~~~~~~~~~~\\ \notag
		&\leq C(1+t)^{-\frac{d}{2}} , 
	\end{align}
which implies
	\begin{align}\label{8ineq3}
		~~~~~~~~~~~~~~~~~~~~~~~~~~~~B_2
		&\leq C\int_{0}^{t}\|G\|_{L^{1}}\int_{S(t)}|\hat{u}|d\xi ds' \\ \notag
		&\leq C(1+t)^{-\frac d 4}\int_{0}^{t}\|G\|_{L^{1}}\left(\int_{S(t)}|\hat{u}|^2d\xi\right)^{\frac 1 2} ds' ~~~~~~~~~~~~~~~~ \\ \notag
		&\leq C(1+t)^{-\frac {d} {2}}\int_{0}^{t}(1+s')^{-\frac{d}{4}-\frac {1} {2}}ds' \\ \notag
		&\leq C(1+t)^{-\frac{d}{2}}~.
	\end{align}
	Similar to the proof of Proposition \ref{decaypro1} , we have
	\begin{align}\label{8ineq4}
		\frac d {dt} E_0(t)+\frac {C_d} {1+t}H_0(t)
		+\|\nabla_q g\|^2_{H^{s}(\mathcal{L}^{2})}\leq \frac {C} {1+t}(1+t)^{-\frac{d}{2}}~,~~~~
	\end{align}
which implies
	\begin{align}\label{8ineq5}
		E_0(t)\leq C(1+t)^{-\frac{d}{2}}~.~~~~~~~~~
	\end{align}
	According to Schonbek's strategy, we infer that
	\begin{align}\label{8ineq6}
		\frac d {dt} E_1(t)+\frac {C_d} {1+t}H_1(t)
		&+\|\Lambda^1 \nabla_q g\|^2_{H^{s-1}(\mathcal{L}^{2})}  \\ \notag
		&\leq \frac {CC_d} {1+t}\int_{S(t)}|\xi|^2\left(|\hat{u}(\xi)|^2+|\hat{\rho}(\xi)|^2\right) d\xi,
	\end{align}
with
	\begin{align}\label{8ineq7}
	~~~~~	\frac {CC_d} {1+t}\int_{S(t)}|\xi|^2\left(|\hat{u}(\xi)|^2+|\hat{\rho}(\xi)|^2\right) d\xi&\leq C{C_d}^2 (1+t)^{-2}\left(\|\rho\|^2_{L^2}+\|u\|^2_{L^2}\right)~~~~~~~\\ \notag
		&\leq C (1+t)^{-2-\frac{d}{2}},
	\end{align}
which implies $E_1 \leq C(1+t)^{-1-\frac{d}{2}}$ .
We thus complete the proof of Proposition \ref{decaythe1} .
\end{proof}

\section{The $\dot{H}^s$ decay rate}
This section is devoted to the optimal decay rate of $(\rho,u,\tau)$ in $\dot H^s$. To the best of our knowledge, the dissipation of $\rho$ is of great significance to the decay rate of $(\rho,u)$. However, it fails to obtain the decay rate of $(\rho,u)$ in $\dot{H}^s$ by the same way as in $L^2$ since
$$\|\Lambda^s(\rho,u)\|^2_{L^2} + \eta\langle\Lambda^{s-1}u,\Lambda^s\rho\rangle \simeq \|\Lambda^s(\rho,u)\|^2_{L^2}$$
does not hold anymore for any positive $\eta$. Consequently, what we can obtain is
that $\|\Lambda^s(\rho,u)\|^2_{L^2}$ has the same decay rate as that of $\|\Lambda^{s-1}(\rho,u)\|^2_{L^2}$. Motivated by \cite{2020Wuguochun}, however, we aware that the dissipation of $\rho$ only in high frequency fully enables us to obtain optimal decay rate in $\dot{H}^s$. More precisely, to overcome the obstacle above, $\langle\Lambda^{s}\dot{\Delta}_j\rho,\Lambda^{s-1} \dot{\Delta}_ju\rangle$ in high frequency $S^c(t)$ is considered and it results in dissipation $\int_{S^c(t)}|\xi|^{2s}|\widehat{\rho}|^2d\xi$. To make full use of the benefit the dissipation $\rho$ provides, a critical Fourier splitting estimation is established in the following lemma, which helps to obtain the optimal decay rate of $(\rho,u,g)$ in $\dot H^s$.

To begin with, we derive a critical Fourier splitting estimation in the following.
\begin{lemm}\label{decayinH.s}
Let $d\geq 3$. Assume $(\rho,u,g)$ be a global strong solution considered in Theorem \ref{th1} such that $\|(\rho,u,\tau)\|_{H^s} \leq \varepsilon$ with small  positive constant $\varepsilon$. Let $\sigma_R = \{j\in\mathcal{N}|\varphi(2^{-j}\xi)\cap S^c(R)\neq\varnothing\}$. Then for any sufficiently small positive constant $\eta$, it holds that  
	\begin{align}\label{9ineq1}
	&\frac{d}{dt}\left(\|\Lambda^s (\sqrt{\gamma}\rho,u)\|^2_{L^2} + \|\Lambda^s g\|^2_{L^2(\mathcal{L}^2)} +\frac{C_d\eta}{(1+t)\ln^2(e+t)}\mathop{\Sigma}\limits_{j\in\sigma_R}\langle\Lambda^{s}\dot{\Delta}_j\rho,\Lambda^{s-1} \dot{\Delta}_ju\rangle \right)\\ \notag
	& ~~~~+ \frac{C_d\eta\gamma}{(1+t)\ln^2(e+t)}\int_{S^c(R)}|\xi|^{2s}|\widehat{\rho}|^2d\xi + \|\Lambda^{s + 1} u\|^2_{L^2}+ \|\Lambda^{s} g\|^2_{L^2(\mathcal{L}^2)} \\ \notag
	&\lesssim (1+t)^{-\frac{d+2}{4}}\int_{S(t)}|\xi|^{2s}(|\widehat{\rho}|^2 + |\widehat{u}|^2)d\xi + (1+t)^{-\frac{d+2}{4}}\int_{S^c(t)} |\xi|^{2s}|\widehat{\rho}|^2d\xi\\ \notag
	&~~~~+\frac{\eta}{(1+t)^2\ln^2(e+t)}\mathop{\Sigma}\limits_{j\in\sigma_R}\langle\Lambda^{s}\dot{\Delta}_j\rho,\Lambda^{s-1} \dot{\Delta}_ju\rangle + \frac{\eta}{(1+t)\ln^2(e+t)}\mathop{\Sigma}\limits_{j\in\sigma_R}\|\Lambda^{s}\dot{\Delta}_ju\|^2_{L^2},
\end{align}
for any positive constant $R$.
\end{lemm}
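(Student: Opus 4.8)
The plan is to combine the top--order ($\dot H^s$) energy estimate --- which by itself produces the dissipation of $\Lambda^{s+1}u$ and of $\nabla_q\Lambda^sg$ but \emph{none} for $\Lambda^s\rho$ --- with a Littlewood--Paley localized analogue of the Matsumura--Nishida cross functional $\langle\nabla\rho,u\rangle$, taken only over the high frequencies $\sigma_R$ and weighted by the slowly decaying factor $\tfrac{C_d\eta}{(1+t)\ln^2(e+t)}$. Fourier splitting then moves to the right--hand side exactly those pieces the dissipation cannot absorb.

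\textbf{Step 1.} First I would apply $\Lambda^s$ to \eqref{eq3} and pair the three equations with $\gamma\Lambda^s\rho$, $\Lambda^su$ (in $L^2$) and $\Lambda^sg$ (in $L^2(\mathcal L^2)$). The pressure coupling $\gamma\nabla\rho$ and the stress coupling $\mathrm{div}\,\tau$ cancel exactly as in \eqref{1ineq13} and the identity above \eqref{4ineq5}, leaving
\[
\frac12\frac{d}{dt}\bigl(\|\Lambda^s(\sqrt\gamma\rho,u)\|^2_{L^2}+\|\Lambda^sg\|^2_{L^2(\mathcal L^2)}\bigr)+\mu\|\Lambda^{s+1}u\|^2_{L^2}+\|\nabla_q\Lambda^sg\|^2_{L^2(\mathcal L^2)}\le\mathcal N(t),
\]
with $\mathcal N(t)$ the commutator and cubic terms already treated in Lemma \ref{high} (the ones involving $\tau$ estimated through Lemmas \ref{Lemma4}, \ref{Lemma5}, one factor of $\|\nabla_q\Lambda^sg\|_{L^2(\mathcal L^2)}$ being absorbed into the dissipation). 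Using $H^{s-1}\hookrightarrow L^\infty$ and the decay $\|\Lambda^1(\rho,u,g)\|_{H^{s-1}}\lesssim(1+t)^{-\frac{d+2}{4}}$ from Proposition \ref{decaythe1} (which also yields $\|\Lambda^s(\rho,u,g)\|_{L^2}\lesssim(1+t)^{-\frac{d+2}{4}}$), each surviving term is $\lesssim(1+t)^{-\frac{d+2}{4}}\|\Lambda^s(\rho,u,g)\|^2$. Splitting $\|\Lambda^s(\rho,u)\|^2_{L^2}=\int_{S(t)}+\int_{S^c(t)}$ and using that on $S^c(t)$ one has $|\xi|^{2s}\le C_d^{-1}(1+t)|\xi|^{2s+2}$ with $(1+t)^{1-\frac{d+2}{4}}\le1$ for $d\ge3$, the $u$--part on $S^c(t)$ is $\le C_d^{-1}\|\Lambda^{s+1}u\|^2_{L^2}$, absorbed into $\mu\|\Lambda^{s+1}u\|^2_{L^2}$ for $C_d$ large; what remains is the dissipation $\mu\|\Lambda^{s+1}u\|^2_{L^2}+\|\nabla_q\Lambda^sg\|^2_{L^2(\mathcal L^2)}$ on the left and the first two terms on the right of \eqref{9ineq1}.

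\textbf{Steps 2--3.} Next, for each $j\in\sigma_R$ I would apply $\dot\Delta_j$ to \eqref{eq3} and differentiate $\langle\Lambda^s\dot\Delta_j\rho,\Lambda^{s-1}\dot\Delta_ju\rangle$ in time. The coupling $\gamma\nabla\rho$ yields the favourable $-\gamma\|\Lambda^s\dot\Delta_j\rho\|^2_{L^2}$; the viscous term, the stress term and the $\rho_t$ term give $\delta\|\Lambda^s\dot\Delta_j\rho\|^2_{L^2}+C_\delta(\|\Lambda^{s+1}\dot\Delta_ju\|^2_{L^2}+\|\nabla_q\Lambda^s\dot\Delta_jg\|^2_{L^2(\mathcal L^2)})+\|\Lambda^s\dot\Delta_ju\|^2_{L^2}$, and the source terms $F,G$ are treated as in Step 1. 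Summing over $j\in\sigma_R$, using $\sum_{j\in\sigma_R}\|\Lambda^s\dot\Delta_jf\|^2_{L^2}\gtrsim\int_{S^c(R)}|\xi|^{2s}|\widehat f|^2\,d\xi$ and taking $\delta\ll\gamma$,
\begin{align*}
\frac{d}{dt}\sum_{j\in\sigma_R}\langle\Lambda^s\dot\Delta_j\rho,\Lambda^{s-1}\dot\Delta_ju\rangle &+\frac{\gamma}{2}\int_{S^c(R)}|\xi|^{2s}|\widehat\rho|^2\,d\xi \\
&\lesssim\sum_{j\in\sigma_R}\|\Lambda^s\dot\Delta_ju\|^2_{L^2}+\|\Lambda^{s+1}u\|^2_{L^2}+\|\nabla_q\Lambda^sg\|^2_{L^2(\mathcal L^2)}+(1+t)^{-\frac{d+2}{4}}\|\Lambda^s(\rho,u,g)\|^2.
\end{align*}
I would then multiply this by $w(t)=\tfrac{C_d\eta}{(1+t)\ln^2(e+t)}$ and add it to Step 1. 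Since $\tfrac{d}{dt}(wX)=wX'+w'X$ with $0<w(t)\le C_d\eta$ and $|w'(t)|\lesssim\tfrac{C_d\eta}{(1+t)^2\ln^2(e+t)}$, the term $w'X$ is exactly the penultimate term of \eqref{9ineq1} and $w(t)\sum_{j\in\sigma_R}\|\Lambda^s\dot\Delta_ju\|^2_{L^2}$ is the last one. Fixing $C_d$ large (as in Step 1) and then $\eta$ small, the extra dissipations $w(t)\|\Lambda^{s+1}u\|^2_{L^2}$ and $w(t)\|\nabla_q\Lambda^sg\|^2_{L^2(\mathcal L^2)}$ are absorbed into $\mu\|\Lambda^{s+1}u\|^2_{L^2}$ and $\|\nabla_q\Lambda^sg\|^2_{L^2(\mathcal L^2)}$, while $w(t)(1+t)^{-\frac{d+2}{4}}\|\Lambda^s(\rho,u,g)\|^2$ is dominated by the term already split in Step 1; a fixed fraction of the surviving $\|\nabla_q\Lambda^sg\|^2_{L^2(\mathcal L^2)}$ is then turned into $\|\Lambda^sg\|^2_{L^2(\mathcal L^2)}$ by the spectral gap of Lemma \ref{Lemma5}. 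This gives \eqref{9ineq1}.

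\textbf{Main obstacle.} The work is not in the individual estimates --- these parallel Sections 3 and 4 --- but in the \emph{design} of the modified energy: the cross term must be localized to the high--frequency blocks $\sigma_R$, so that it is pointwise $\lesssim\|\Lambda^s(\rho,u)\|^2_{L^2}$ there and the modified energy stays equivalent to $\|\Lambda^s(\rho,u,g)\|^2$, and it must carry the weight $\tfrac{C_d\eta}{(1+t)\ln^2(e+t)}$, whose slow decay is exactly what will make the remainder $\tfrac{\eta}{(1+t)\ln^2(e+t)}\sum_{j\in\sigma_R}\|\Lambda^s\dot\Delta_ju\|^2_{L^2}$ and --- once $R$ is taken time--dependent in the next section --- the gained dissipation $\tfrac{C_d\eta\gamma}{(1+t)\ln^2(e+t)}\int_{S^c(R)}|\xi|^{2s}|\widehat\rho|^2\,d\xi$ harmless in the ensuing Gronwall argument, with no loss of power in $t$. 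Keeping the order of the small parameters ($C_d$ large, then $\eta$, then $\delta$) consistent, and checking that every nonlinear term genuinely decays at the critical rate $(1+t)^{-\frac{d+2}{4}}$ --- which forces a careful, rather than wholesale, use of Young's inequality so that no pure power-of-$t$ remainder is created --- is where the care lies.
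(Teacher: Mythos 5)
Your proposal follows the same two-step strategy as the paper's proof: a top-order $\dot H^s$ energy estimate yielding dissipation of $\Lambda^{s+1}u$ and $\nabla_q\Lambda^s g$ (converted to $\Lambda^s g$ via the Poincar\'e inequality of Lemma \ref{Lemma5}), combined with the Littlewood--Paley localized cross functional $\sum_{j\in\sigma_R}\langle\Lambda^s\dot\Delta_j\rho,\Lambda^{s-1}\dot\Delta_ju\rangle$ weighted by $\tfrac{C_d\eta}{(1+t)\ln^2(e+t)}$ to extract high-frequency dissipation of $\rho$, with nonlinear terms controlled by the $(1+t)^{-\frac{d+2}{4}}$ decay of $E_1$ from Proposition \ref{decaythe1} and split into low/high frequency as in \eqref{9ineq8}--\eqref{9ineq18}. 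This matches the paper's argument in all essentials, including the order in which $C_d$, $\eta$ and $\varepsilon$ are fixed.
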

\begin{proof}
The proof is divided into two steps. In the first step, the dissipation of $u$ and $g$ is obtained by standard energy estimation. In the second step, we derive the dissipation of $\rho$ in high frequency by virtue of Littlewood-paley decomposition. It what follows, for any $t\in[0,\infty)$, we denote
$$
\rho = \mathcal{F}^{-1}\left(\chi_{S(t)}\hat{\rho}\right) + \mathcal{F}^{-1}\left(\left(1-\chi_{S(t)}\right)\hat{\rho}\right) = \rho^{low} + \rho^{high},
$$
and
$$
u = \mathcal{F}^{-1}\left(\chi_{S(t)}\hat{u}\right) + \mathcal{F}^{-1}\left(\left(1-\chi_{S(t)}\right)\hat{u}\right) = u^{low} + u^{high}.
$$
	\textbf{ Dissipation of $u$ and $g$ : } \\
	Applying $\Lambda^s$ to system $\eqref{eq1}_1$ and taking $L^2$ inner product with $\Lambda^{s}\rho$ , we have
	\begin{align}\label{9ineq2}
		\frac{1}{2}\frac{d}{dt}\|\Lambda^s\rho\|^2_{L^2} + \langle \Lambda^s\rm div~u,\Lambda^s\rho\rangle = \langle \Lambda^sF,\Lambda^s\rho\rangle~.
	\end{align}
	According to Proposition \ref{decaypro1} , we infer that
	$$
	E_0 \leq C(1+t)^{-\frac{d}{2}} ~~~\text{and} ~~~E_1\leq C(1+t)^{-\frac{d}{2}-1}.
	$$
	It follows from Lemma \ref{Lemma3} that
	\begin{align}\label{9ineq3}
		\langle\Lambda^{s}\left({\rm div}u \rho\right),\Lambda^{s}\rho\rangle &\leq \left(\|{\rm div}u\|_{L^{\infty}}\|\Lambda^{s}\rho\|_{L^2} + \|\rho\|_{L^{\infty}}\|\Lambda^{s+1}u\|_{L^2}\right)\|\Lambda^{s}\rho\|_{L^2} \\ \notag
		&\lesssim \varepsilon\|\Lambda^{s+1}u\|^2_{L^2} +  (1+t)^{-\frac{d+2}{4}}\left(\|\Lambda^{s}\rho^{high}\|^2_{L^2}+\|\Lambda^{s}\rho^{low}\|^2_{L^2}\right)~,~~~~~~~~~~~~~~~~~
	\end{align}
	and
	\begin{align}\label{9ineq4}
		\langle\Lambda^{s}\left(u\cdot\nabla\rho\right),\Lambda^{s}\rho\rangle &= \langle u\cdot\nabla\Lambda^{s}\rho,\Lambda^{s}\rho\rangle + \langle[\Lambda^{s},u\cdot\nabla]\rho,\Lambda^{s}\rho\rangle \\ \notag
		&\leq \|{\rm div}u\|_{L^{\infty}}\|\Lambda^{s}\rho\|^2_{L^2} + \left(\|\nabla\rho\|_{L^{d}}\|\Lambda^su\|_{L^{\frac{2d}{d-2}}} + \|\nabla u\|_{\infty}\|\Lambda^s\rho\|_{L^2}\right)\|\Lambda^s\rho\|_{L^2} \\ \notag
		&\lesssim \varepsilon\|\Lambda^{s+1}u\|^2_{L^2} + (1+t)^{-\frac{d+2}{4}}\left(\|\Lambda^{s}\rho^{high}\|^2_{L^2}+\|\Lambda^{s}\rho^{low}\|^2_{L^2}\right)~,
	\end{align}
	which implies
	\begin{align}\label{9ineq5}
		\frac{d}{dt}\|\Lambda^s\rho\|^2_{L^2} &-\langle\Lambda^su,\nabla\Lambda^{s}\rho\rangle \\ \notag
		&\lesssim \varepsilon\|\Lambda^{s+1}u\|^2_{L^2} +  (1+t)^{-\frac{d+2}{4}}\left(\|\Lambda^{s}\rho^{high}\|^2_{L^2}+\|\Lambda^{s}\rho^{low}\|^2_{L^2}\right)~.
	\end{align}
	Applying $\Lambda^s$ to system $\eqref{eq1}_2$ and taking $L^2$ inner product with $\Lambda^{s}u$ , one can arrive at
	\begin{align}\label{9ineq6}
		\frac{1}{2}\frac{d}{dt}\|\Lambda^su\|^2_{L^2} &+\mu\|\nabla\Lambda^s u\|^2_{L^2}+(\mu+\mu')\|{\rm div}\Lambda^su\|^2_{L^2} \\ \notag
		&+ \gamma\langle\Lambda^s\nabla\rho,\Lambda^su\rangle - \langle\Lambda^s\rm div~\tau,\Lambda^su\rangle = \langle \Lambda^sG,\Lambda^su\rangle~,~~~~~~~~~~~~~~~
	\end{align}
	where
	\begin{align}\label{9ineq7}
		\langle\Lambda^{s}G,\Lambda^{s}u\rangle &=- \langle\Lambda^{s}\left(\frac{\rho}{1+\rho}{\rm div}\Sigma u + [h(\rho)-\gamma]\nabla\rho + \frac{\rho}{1+\rho}{\rm div}\tau + u\nabla u\right),\Lambda^{s}u\rangle~.~~~~~
	\end{align}
It follows from $\xi \in S^c(t)$ that
$$\frac{C_d}{1+t}\|\Lambda^{s}u^{high}\|^2_{L^2} \leq \|\Lambda^{s+1}u^{high}\|^2_{L^2}.$$
	According to Lemma \ref{Lemma3} and Proposition \ref{decaypro1} , we obtain
	\begin{align}\label{9ineq8}
		\langle\Lambda^{s}(u\nabla u),\Lambda^{s}u\rangle &\leq \|\Lambda^{s-1}(u\nabla u)\|_{L^2}\|\Lambda^{s+1}u\|_{L^2}\\ \notag
		& \leq \left(\|\Lambda^{s-1}u\|_{L^{\frac{2d}{d-2}}}\|\nabla u\|_{L^{d}} + \|u\|_{L^{d}}\|\Lambda^{s}u\|_{\frac{2d}{d-2}} \right)\|\Lambda^{s+1}u\|_{L^2} \\ \notag
		& \leq \varepsilon\|\Lambda^{s+1}u\|^2_{L^2} + \|u\|^2_{H^s}\|\Lambda^{s}u^{high}\|^2_{L^2} + (1+t)^{-\frac{d+2}{4}}\|\Lambda^{s}u^{low}\|^2_{L^2} \\ \notag
		& \leq \varepsilon\|\Lambda^{s+1}u\|^2_{L^2} + \frac{C}{1+t}\|u\|^{\frac{2}{3}}_{H^s}\|\Lambda^{s}u^{high}\|^2_{L^2} + (1+t)^{-\frac{d+2}{4}}\|\Lambda^{s}u^{low}\|^2_{L^2} \\ \notag
		& \lesssim \varepsilon\|\Lambda^{s+1}u\|^2_{L^2} + \|u\|^{\frac{2}{3}}_{H^s}\|\Lambda^{s+1}u^{high}\|^2_{L^2} + (1+t)^{-\frac{d+2}{4}}\|\Lambda^{s}u^{low}\|^2_{L^2}\\ \notag
		& \lesssim \varepsilon\|\Lambda^{s+1}u\|^2_{L^2} + (1+t)^{-\frac{d+2}{4}}\|\Lambda^{s}u^{low}\|^2_{L^2}~.
	\end{align}
	Similarly, we have
	\begin{align}\label{9ineq9}
		\langle\Lambda^{s}\left(\frac{\rho}{1+\rho} {\rm div}\Sigma u\right),\Lambda^{s}u\rangle &\leq \|\Lambda^{s-1}(\frac{\rho}{1+\rho}{\rm div}\Sigma u)\|_{L^2}\|\Lambda^{s+1}u\|_{L^2} \\ \notag
		& \leq \left(\|\Lambda^{s-1}\rho\|_{L^{\frac{2d}{d-2}}}\|\nabla^2u\|_{L^{d}} + \|\rho\|_{L^{\infty}}\|\Lambda^{s+1}u\|_{L^2}\right)\|\Lambda^{s+1}u\|_{L^2} \\ \notag
		&\lesssim \varepsilon\|\Lambda^{s+1}u\|^2_{L^2} + (1+t)^{-\frac{d+2}{4}}\left(\|\Lambda^{s}\rho^{high}\|^2_{L^2}+\|\Lambda^{s}\rho^{low}\|^2_{L^2}\right),~~~
	\end{align}
	and
	\begin{align}\label{9ineq10}
		\langle\Lambda^{s}([h(\rho)-\gamma]\nabla\rho),\Lambda^{s}u\rangle &\leq \|\Lambda^{s-1}([h(\rho)-\gamma]\nabla\rho)\|_{L^2}\|\Lambda^{s+1}u\|_{L^2}\\ \notag
		& \leq \left(\|\Lambda^{s-1}\rho\|_{L^{\frac{2d}{d-2}}}\|\nabla\rho\|_{L^{d}} + \|\rho\|_{L^{\infty}}\|\Lambda^{s}\rho\|_{L^2}\right)\|\Lambda^{s+1}u\|_{L^2} \\ \notag
		&\lesssim \varepsilon\|\Lambda^{s+1}u\|^2_{L^2} + (1+t)^{-\frac{d+2}{4}}\left(\|\Lambda^{s}\rho^{high}\|^2_{L^2}+\|\Lambda^{s}\rho^{low}\|^2_{L^2}\right).~~~~~
	\end{align}
	Applying Lemmas \ref{Lemma4} and \ref{Lemma5} leads to
	\begin{align}\label{9ineq11}
		\langle\Lambda^{s}&(\frac{\rho}{1+\rho}{\rm div}\tau),\Lambda^{s}u\rangle\\ \notag
		&\leq \|\Lambda^{s-1}(\frac{\rho}{1+\rho}{\rm div}\tau)\|_{L^2}\|\Lambda^{s+1}u\|_{L^2}\\ \notag
		& \leq (\|\Lambda^{s-1}\rho\|_{L^{\frac{2d}{d-2}}}\|\nabla\tau\|_{L^{d}} + \|\rho\|_{L^{\infty}}\|\Lambda^{s}\tau\|_{L^2})\|\Lambda^{s+1}u\|_{L^2} \\ \notag
		&\lesssim \varepsilon\left(\|\Lambda^{s+1}u\|^2_{L^2} + \|\nabla_q\Lambda^sg\|^2_{L^2(\mathcal{L}^2)}\right) + (1+t)^{-\frac{d+2}{4}}\left(\|\Lambda^{s}\rho^{high}\|^2_{L^2}+\|\Lambda^{s}\rho^{low}\|^2_{L^2}\right) .~~
	\end{align}
	Combining the estimates \eqref{9ineq8} - \eqref{9ineq11} , we deduce that
	\begin{align}\label{9ineq12}
		\frac{1}{2}\frac{d}{dt}\|\Lambda^su\|^2_{L^2} &+\mu\|\nabla\Lambda^s u\|^2_{L^2}+(\mu+\mu')\|{\rm div}\Lambda^su\|^2_{L^2} 
		\\ \notag
		&+ \gamma\langle\Lambda^s\nabla\rho,\Lambda^su\rangle - \langle\Lambda^s{\rm div}\tau,\Lambda^su\rangle \\ \notag
		&\lesssim  \varepsilon\left(\|\Lambda^{s+1}u\|^2_{L^2} + \|\nabla_q\Lambda^sg\|^2_{L^2(\mathcal{L}^2)}\right) \\ \notag
		&~~~~+ (1+t)^{-\frac{d+2}{4}}\|\Lambda^{s}(\rho,u)^{low}\|^2_{L^2}~.~~~~~~~~~~~~~~~~~~~~~~
	\end{align}
	Applying $\Lambda^s$ to system $\eqref{eq1}_3$ and taking $L^2(\mathcal{L}^2)$ inner product with $\Lambda^{s}g$ , we get
	\begin{align}\label{9ineq13}
		\frac{d}{dt}\|\Lambda^sg\|^2_{L^2} + \|\nabla_q\Lambda^sg\|^2_{L^2(\mathcal{L}^2)} +\langle\Lambda^su,\Lambda^{s}\rm div~\tau\rangle 
		= \langle\Lambda^sH,\Lambda^{s}g\rangle~,
	\end{align}
where
	\begin{align}\label{9ineq14}
		\langle\Lambda^{s}H,\Lambda^{s}g\rangle &= -\langle\Lambda^{s}\left(u\cdot\nabla g\right),\Lambda^{s}g\rangle+\langle\Lambda^{s}\left(\nabla u qg\right),\nabla_q\Lambda^{s}g\rangle~.~~~~
	\end{align}
	Applying Lemma \ref{Lemma4} leads to
	\begin{align}\label{9ineq15}
		\langle\Lambda^{s}(u\cdot\nabla g),\Lambda^{s}g\rangle &= \langle u\cdot\nabla\Lambda^{s}g,\Lambda^{s}g\rangle + \langle [\Lambda^{s},u\cdot\nabla]g,\Lambda^{s}g\rangle \\ \notag
		&\leq \left(\|{\rm div}u\|_{L^{\infty}} + \|\nabla u\|_{L^{\infty}} \right)\|\Lambda^{s}g\|^2_{L^2(\mathcal{L}^2)} \\ \notag
		&~~~~ + \|\Lambda^{s}u\|_{L^{\frac{2d}{d-2}}}\|\nabla g\|_{L^2(\mathcal{L}^2)}\|\Lambda^{s}g\|_{L^2(\mathcal{L}^2)}  \\ \notag
		& \lesssim \varepsilon\left(\|\Lambda^{s+1}u\|^2_{L^{2}} + \|\nabla_q\Lambda^{s}g\|^2_{L^2(\mathcal{L}^2)}\right),~~~~~~~~~
	\end{align}
and
	\begin{align}\label{9ineq16}
		\langle\Lambda^{s}\left(\nabla u qg\right),\nabla_q\Lambda^{s}g\rangle &\leq \|\Lambda^{s+1}u\|_{L^{2}}\|\langle q\rangle g\|_{L^{\infty}(\mathcal{L}^2)}\|\Lambda^{s}g\|_{L^{2}(\mathcal{L}^2)}\\ \notag
		&~~~~ + \|\nabla u\|_{L^{\infty}}\|q\Lambda^{s}g\|_{L^{2}}\|\Lambda^{s}g\|_{L^{2}(\mathcal{L}^2)}\\ \notag
		& \lesssim \varepsilon\left( \|\Lambda^{s+1}u\|^2_{L^{2}} + \|\nabla_q\Lambda^{s}g\|^2_{L^2(\mathcal{L}^2)}\right)~.~~~~
	\end{align}
	Combining the estimates \eqref{9ineq15} and \eqref{9ineq16} , we have
	\begin{align}\label{9ineq17}
		\frac{d}{dt}\|\Lambda^sg\|^2_{L^2} &+ \|\nabla_q\Lambda^sg\|^2_{L^2(\mathcal{L}^2)} +\langle\Lambda^su,\Lambda^{s}{\rm div}\tau\rangle  \\ \notag
		&\lesssim \varepsilon\left( \|\Lambda^{s+1}u\|^2_{L^{2}} + \|\nabla_q\Lambda^{s}g\|^2_{L^2(\mathcal{L}^2)}\right).~~~~~~~~~~~~~
	\end{align}
	Together with \eqref{9ineq5} , \eqref{9ineq12} and \eqref{9ineq17} , we conclude that
	\begin{align}\label{9ineq18}
		&\frac{d}{dt}\left(\gamma\|\Lambda^s\rho\|^2_{L^2}+\|\Lambda^s u\|^2_{L^2}+\|\Lambda^{s}g\|^2_{L^2(\mathcal{L}^2)}\right) \\ \notag
		&~~~~+\mu\|\nabla\Lambda^s u\|^2_{L^2}+(\mu+\mu')\|{\rm div} \Lambda^su\|^2_{L^2} + \|\nabla_q\Lambda^sg\|^2_{L^2(\mathcal{L}^2)} \\ \notag
		&\lesssim (1+t)^{-\frac{d+2}{4}}\left(\|\Lambda^{s}\rho^{high}\|^2_{L^2} + \|\Lambda^{s}(\rho,u)^{low}\|^2_{L^2}\right).
	\end{align}
	\textbf{ Dissipation of $\rho$ in high frequency : }\\
	Applying $\Lambda^{s}\dot{\Delta}_j$ to system $\eqref{eq1}_1$ and taking $L^2$ inner product with $\Lambda^{s-1}\dot{\Delta}_ju$ , we get
	\begin{align}\label{9ineq19}
		\langle\partial_t \Lambda^{s}\dot{\Delta}_j\rho,\Lambda^{s-1}\dot{\Delta}_ju\rangle - \|\Lambda^{s}\dot{\Delta}_ju\|^2_{L^2} = \langle \Lambda^{s}\dot{\Delta}_jf,\Lambda^{s-1}\dot{\Delta}_ju\rangle~.
	\end{align}
	Accroding to Proposition \ref{decaypro1} , we have
	\begin{align}\label{9ineq20}
		\langle \Lambda^{s}\dot{\Delta}_jF,\Lambda^{s-1}\dot{\Delta}_ju\rangle &=  - \langle \Lambda^{s}\dot{\Delta}_j(\rho u),\Lambda^{s}\dot{\Delta}_ju\rangle \\ \notag
		& \leq \|\Lambda^{s-1}\dot{\Delta}_j(\rho u)\|_{L^2}\|\Lambda^{s+1}\dot{\Delta}_ju\|_{L^2} \\ \notag
		& \leq d_j\left(\|\Lambda^{s-1}\rho\|_{L^\frac{2d}{d-2}}\|u\|_{L^{d}}+\|\rho\|_{L^{d}}\|\Lambda^{s-1}u\|_{L^\frac{2d}{d-2}}\right)\|\Lambda^{s+1}u\|_{L^2}\\ \notag
		&\leq d_j\varepsilon\|\Lambda^{s+1}u\|^2_{L^2}+d_j\|u\|^2_{L^{d}}\|\Lambda^{s}\rho\|^2_{L^2}   \\ \notag
		&~~~~+ d_j\|\rho\|^2_{H^s}\left(\|\Lambda^{s}u^{high}\|^2_{L^2} + \|\Lambda^{s}u^{low}\|^2_{L^2}\right) \\ \notag
		&\lesssim \varepsilon d_j\|\Lambda^{s+1}u\|^2_{L^2} + d_j(1+t)^{-\frac{d}{2}}\left(\|\Lambda^{s}\rho^{high}\|^2_{L^2} + \|\Lambda^{s}(\rho,u)^{low}\|^2_{L^2}\right)~,~~~
	\end{align}
	where $\{d_j\}_{j\in Z}\in l^1$. Thus we obtain
	\begin{align}\label{9ineq21}
		\langle\partial_t \Lambda^{s}&\dot{\Delta}_j\rho,\Lambda^{s-1}\dot{\Delta}_ju\rangle - \|\Lambda^{s}\dot{\Delta}_ju\|^2_{L^2} \\ \notag
		&\lesssim \varepsilon d_j\|\Lambda^{s+1}u\|^2_{L^2} + d_j(1+t)^{-\frac{d}{2}}\left(\|\Lambda^{s}\rho^{high}\|^2_{L^2} + \|\Lambda^{s}(\rho,u)^{low}\|^2_{L^2}\right).~~~~~~~~~~~~~~~~~~~~~~~
	\end{align}
	Applying $\Lambda^{s-1}\dot{\Delta}_j$ to system $\eqref{eq1}_2$ and taking $L^2$ inner product with $\Lambda^{s}\dot{\Delta}_j\rho$ , we arrive at
	\begin{align}\label{9ineq22}
		\langle \Lambda^{s}\dot{\Delta}_j\rho,\partial_t\Lambda^{s-1}\dot{\Delta}_ju\rangle + \gamma\|\Lambda^s\dot{\Delta}_j\rho\|^2_{L^2} &- \langle\Lambda^{s}{\rm div}\Sigma \dot{\Delta}_j u,\Lambda^{s}\dot{\Delta}_j\rho\rangle \\ \notag
		&= \langle\Lambda^{s}\dot{\Delta}_j G,\Lambda^{s}\dot{\Delta}_j\rho\rangle  + \langle\Lambda^{s}{\rm div} \dot{\Delta}_j \tau,\Lambda^{s}\dot{\Delta}_j\rho\rangle~,~~~~~~~~~~~
	\end{align}
	where
	\begin{align}\label{9ineq23}
		\langle\Lambda^{s-1}\dot{\Delta}_j&G,\Lambda^{s}\dot{\Delta}_j\rho\rangle  \\ \notag
		&= -\langle\Lambda^{s-1}\dot{\Delta}_j \left( \frac{\rho}{1+\rho}{\rm div}\Sigma u + [h(\rho)-\gamma]\nabla\rho + \frac{\rho}{1+\rho}{\rm div}\tau + u\nabla u \right) ,\Lambda^{s}\dot{\Delta}_j\rho\rangle.~~~~~
	\end{align}
	Accroding to Proposition \ref{decaypro1} , we have
	\begin{align}\label{9ineq24}
		\langle\Lambda^{s-1}\dot{\Delta}_j(\frac{\rho}{1+\rho} &{\rm div}\Sigma u),\Lambda^{s}\dot{\Delta}_j\rho\rangle \\ \notag
		&\leq d_j\left(\|\Lambda^{s-1}\rho\|_{L^{\frac{2d}{d-2}}}\|\nabla^2 u\|_{L^{d}} + \|\Lambda^{s+1}u\|_{L^{2}}\|\rho\|_{L^{\infty}} \right)\|\Lambda^{s}\rho\|_{L^2}~~~~~~~~~\\ \notag
		&\lesssim \varepsilon d_j \|\Lambda^{s+1}u\|^2_{L^2} + d_j(1+t)^{-\frac{d+2}{4}}\left(\|\Lambda^{s}\rho^{high}\|^2_{L^2} + \|\Lambda^{s}\rho^{low}\|^2_{L^2}\right),~~
	\end{align}
	and
	\begin{align}\label{9ineq25}
		\langle\Lambda^{s-1}\dot{\Delta}_j([h(\rho)-\gamma]&\nabla\rho),\Lambda^{s}\dot{\Delta}_j\rho\rangle \\ \notag
		&\leq d_j\left(\|\Lambda^{s-1}\rho\|_{L^{\frac{2d}{d-2}}}\|\nabla\rho\|_{L^{d}} + \|\Lambda^{s}\rho\|_{L^{2}}\|\rho\|_{L^{\infty}} \right)\|\Lambda^{s}\rho\|_{L^2} ~~~~~~~~~~~~~~~~~ \\ \notag
		&\lesssim d_j(1+t)^{-\frac{d}{4}}\left(\|\Lambda^{s}\rho^{high}\|^2_{L^2} + \|\Lambda^{s}\rho^{low}\|^2_{L^2}\right).
	\end{align}
	Lemma \ref{Lemma3} ensures that
	\begin{align}\label{9ineq26}
		\langle\Lambda^{s-1}\dot{\Delta}_j(\frac{\rho}{1+\rho}&{\rm div}\tau),\Lambda^{s}\dot{\Delta}_j\rho\rangle \\ \notag
		&\leq d_j\left(\|\Lambda^{s-1}\rho\|_{L^{\frac{2d}{d-2}}}\|\nabla\tau\|_{L^{d}} + \|\Lambda^{s}\tau\|_{L^{2}}\|\rho\|_{L^{\infty}} \right)\|\Lambda^{s}\rho\|_{L^2}\\ \notag
		&\lesssim \varepsilon d_j\|\Lambda^{s}g\|^2_{L^2(\mathcal{L}^2)} + d_j(1+t)^{-\frac{d+2}{4}}\left(\|\Lambda^{s}\rho^{high}\|^2_{L^2}+\|\Lambda^{s}\rho^{low}\|^2_{L^2}\right)~,~~~~~~~~~
	\end{align}
	and
	\begin{align}\label{9ineq27}
		\langle\Lambda^{s-1}\dot{\Delta}_j({\rm div}\tau),\Lambda^{s}\dot{\Delta}_j\rho\rangle &+ \langle\Lambda^{s-1}\dot{\Delta}_j({\rm div}\Sigma u),\Lambda^{s}\dot{\Delta}_j\rho\rangle \\ \notag
		&\leq d_j\left(\|\Lambda^{s}g\|^2_{L^2(\mathcal{L}^2)}+\|\Lambda^{s+1}u\|^2_{L^2}\right) + \varepsilon\|\Lambda^{s}\dot\Delta_j\rho\|^2_{L^2}~.~~~~~~~~~~~~~~~~~~
	\end{align}
	Similarly, we have
	\begin{align}\label{9ineq28}
		\langle\Lambda^{s-1}\dot{\Delta}_j(u\nabla u)&,\Lambda^{s}\dot{\Delta}_j\rho\rangle \\ \notag
		&\leq d_j\left(\|\Lambda^{s-1}u\|_{L^{\frac{2d}{d-2}}}\|\nabla u\|_{L^{\infty}} + \|u\|_{L^{d}}\|\Lambda^{s}u\|_{L^{\frac{2d}{d-2}}} \right)\|\Lambda^{s}\rho\|_{L^2}\\ \notag
		&\lesssim
		\varepsilon d_j\|\Lambda^{s+1}u\|^2_{L^2} + d_j(1+t)^{-\frac{d+2}{4}}\left(\|\Lambda^{s}\rho^{high}\|^2_{L^2}+\|\Lambda^{s}(\rho,u)^{low}\|^2_{L^2}\right)~,~~~~~
	\end{align}
	Combining estimates \eqref{9ineq24}-\eqref{9ineq28} , we infer that
	\begin{align}\label{9ineq29}
		\langle \Lambda^{s}\dot{\Delta}_j\rho,\partial_t\Lambda^{s-1}\dot{\Delta}_ju\rangle &+ \gamma\|\Lambda^s\dot{\Delta}_j\rho\|^2_{L^2} \lesssim \varepsilon d_j\left( \|\Lambda^{s+1}u\|^2_{L^2} + \|\Lambda^{s}g\|^2_{L^2(\mathcal{L}^2)}\right) \\ \notag
		&+ d_j(1+t)^{-\frac{d}{4}}\left(\|\Lambda^{s}\rho^{high}\|^2_{L^2}+\|\Lambda^{s}(\rho,u)^{low}\|^2_{L^2}\right)~.~~~~~~~~~~~~~~~~~~~~~~
	\end{align}
	One can get from \eqref{9ineq21} and \eqref{9ineq29} that
	\begin{align}\label{9ineq30}
		\frac{d}{dt}\langle \Lambda^{s}\dot{\Delta}_j\rho,\Lambda^{s-1}\dot{\Delta}_ju\rangle &+ \gamma\|\Lambda^s\dot{\Delta}_j\rho\|^2_{L^2} - \|\Lambda^{s}\dot{\Delta}_ju\|^2_{L^2} \\ \notag
		&\lesssim \varepsilon d_j\left( \|\Lambda^{s+1}u\|^2_{L^2} + \|\Lambda^{s}g\|^2_{L^2(\mathcal{L}^2)}\right) \\ \notag
		&~~~~ + d_j(1+t)^{-\frac{d}{4}}\left(\|\Lambda^{s}\rho^{high}\|^2_{L^2}+\|\Lambda^{s}(\rho,u)^{low}\|^2_{L^2}\right)~,~~~~~~~~~~~~~~~~~~
	\end{align}
	which implies
	\begin{align}\label{9ineq31}
		~~~~~~~~~\frac{d}{dt}&\left(\frac{C_d\eta}{(1+t)\ln^2(e+t)}\langle \Lambda^{s}\dot{\Delta}_j\rho,\Lambda^{s-1}\dot{\Delta}_ju\rangle\right) + \frac{C_d\eta\gamma}{(1+t)\ln^2(e+t)}\|\Lambda^s\dot{\Delta}_j\rho\|^2_{L^2}  \\ \notag
		&\lesssim \eta\varepsilon  d_j\left( \|\Lambda^{s+1}u\|^2_{L^2} + \|\Lambda^{s}g\|^2_{L^2(\mathcal{L}^2)}\right) + \eta d_j(1+t)^{-\frac{d}{4}-1}\left(\|\Lambda^{s}\rho^{high}\|^2_{L^2}+\|\Lambda^{s}(\rho,u)^{low}\|^2_{L^2}\right) \\ \notag
		&~~~~ + \frac{\eta}{(1+t)^2\ln^2(e+t)}\langle \Lambda^{s}\dot{\Delta}_j\rho,\Lambda^{s-1}\dot{\Delta}_ju\rangle +\frac{\eta}{(1+t)\ln^2(e+t)}\|\Lambda^{s}\dot{\Delta}_ju\|^2_{L^2}~.
	\end{align}
	Summing $j\in \sigma_R$ up leads to
	\begin{align}\label{9ineq32}
		\frac{d}{dt}&\left(\frac{C_d\eta}{(1+t)ln^2(e+t)}\mathop{\Sigma}\limits_{j\in\sigma_R}\langle \Lambda^{s}\dot{\Delta}_j\rho,\Lambda^{s-1}\dot{\Delta}_ju\rangle\right) + \frac{C_d\eta\gamma}{(1+t)ln^2(e+t)}\int_{S^c(R)}|\xi|^{2s}|\widehat{\rho}|^2d\xi   \\ \notag
		&\lesssim \eta\varepsilon\left( \|\Lambda^{s+1}u\|^2_{L^2} + \|\Lambda^{s}\tau\|^2_{L^2}\right)
		+ \eta(1+t)^{-\frac{d}{4}-1}\left(\|\Lambda^{s}\rho^{high}\|^2_{L^2}+\|\Lambda^{s}(\rho,u)^{low}\|^2_{L^2}\right) \\ \notag
		&~~~~+\frac{\eta}{(1+t)^2\ln^2(e+t)}\mathop{\Sigma}\limits_{j\in\sigma_R}\langle \Lambda^{s}\dot{\Delta}_j\rho,\Lambda^{s-1}\dot{\Delta}_ju\rangle +\frac{\eta}{(1+t)\ln^2(e+t)}\mathop{\Sigma}\limits_{j\in\sigma_R}\|\Lambda^{s}\dot{\Delta}_ju\|^2_{L^2}~.
	\end{align}
	According to \eqref{9ineq18} and \eqref{9ineq32} , we arrive at \eqref{9ineq1} .
\end{proof}

\begin{prop}[Large time behaviour]\label{decayth2}
		Let $d\geq 3$. Let $(\rho,u,g)$ be a global strong solution of system \eqref{eq1} considered in Theorem \ref{th2} . In addition, if $(\rho_0,u_0)\in \dot{B}^{-\frac{d}{2}}_{2,\infty}\times \dot{B}^{-\frac{d}{2}}_{2,\infty}$ and $g_0 \in \dot{B}^{-\frac{d}{2}}_{2,\infty}(\mathcal{L}^2)$, then there exists a constant $C$ such that
	\begin{align*}
		\|\Lambda^s(\rho,u)\|_{L^2} + \|\Lambda^sg\|_{L^2(\mathcal{L}^2)} \leq C(1+t)^{-\frac d 4 - \frac{s}{2}} .
	\end{align*}
\end{prop}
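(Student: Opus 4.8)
The plan is to run a Schonbek-type Fourier-splitting argument at the $\dot H^s$ level, feeding the critical estimate of Lemma \ref{decayinH.s} with the \emph{time-dependent} threshold $R=t$ and using the $L^2$-type decay of Proposition \ref{decaythe1} to control the low-frequency part.

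First I would fix a small $\eta>0$, apply Lemma \ref{decayinH.s} with $R=t$, and work with the functional
\[
\mathcal{H}^s(t):=\|\Lambda^s(\sqrt{\gamma}\rho,u)\|^2_{L^2}+\|\Lambda^s g\|^2_{L^2(\mathcal{L}^2)}+\frac{C_d\eta}{(1+t)\ln^2(e+t)}\sum_{j\in\sigma_t}\langle\Lambda^{s}\dot{\Delta}_j\rho,\Lambda^{s-1}\dot{\Delta}_ju\rangle .
\]
On the blocks $j\in\sigma_t$ one has $|\xi|\gtrsim(1+t)^{-1/2}$, hence $\|\Lambda^{s-1}\dot{\Delta}_ju\|_{L^2}\lesssim(1+t)^{1/2}\|\Lambda^{s}\dot{\Delta}_ju\|_{L^2}$; Cauchy--Schwarz then bounds the corrector by $C\eta\ln^{-2}(e+t)(\|\Lambda^s\rho\|_{L^2}^2+\|\Lambda^s u\|_{L^2}^2)$, so for $\eta$ small $\mathcal{H}^s(t)\simeq\|\Lambda^s(\rho,u)\|^2_{L^2}+\|\Lambda^s g\|^2_{L^2(\mathcal{L}^2)}$. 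The set $\sigma_t$ changes only along a discrete sequence of times, and at each such time the block that is added sits at frequency $\sim(1+t)^{-1/2}$, so the induced jump of $\mathcal{H}^s$ is $O\big((1+t)^{-1/2}\ln^{-2}(e+t)\,\mathcal{H}^s\big)$: summable over these times and of lower order. This is exactly why the moving threshold lets one avoid any finite induction on frequency blocks.

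Next I would tidy the right-hand side of \eqref{9ineq1} with $R=t$. Since $d\geq3$ gives $\frac{d+2}{4}>1$, one has $(1+t)^{-\frac{d+2}{4}}\leq\frac{c}{2}(1+t)^{-1}\ln^{-2}(e+t)$ for $t\geq T_0$ with $T_0$ fixed, so the term $(1+t)^{-\frac{d+2}{4}}\int_{S^c(t)}|\xi|^{2s}|\widehat{\rho}|^2\,d\xi$ is absorbed into the $\rho$-dissipation $\frac{C_d\eta\gamma}{(1+t)\ln^2(e+t)}\int_{S^c(t)}|\xi|^{2s}|\widehat{\rho}|^2\,d\xi$; using $\|\Lambda^{s+1}u^{high}\|^2_{L^2}\geq\frac{C_d}{1+t}\|\Lambda^su^{high}\|^2_{L^2}$ and the equivalence above, the two corrector terms on the right absorb into the $u$-dissipation. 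This leaves, schematically for $t\geq T_0$,
\[
\frac{d}{dt}\mathcal{H}^s+\frac{c}{(1+t)\ln^2(e+t)}\int_{S^c(t)}|\xi|^{2s}|\widehat{\rho}|^2d\xi+c\|\Lambda^{s+1}u\|^2_{L^2}+c\|\Lambda^sg\|^2_{L^2(\mathcal{L}^2)}\lesssim (1+t)^{-\frac{d+2}{4}}\|\Lambda^s(\rho,u)^{low}\|^2_{L^2}.
\]
For the surviving low-frequency term I would invoke Proposition \ref{decaythe1}: $\|(\rho,u)\|^2_{L^2}\leq E_0(t)\leq C(1+t)^{-d/2}$, and since $|\xi|^{2s}\leq(C_d/(1+t))^s$ on $S(t)$ this gives $\|\Lambda^s(\rho,u)^{low}\|^2_{L^2}\leq C(1+t)^{-s-d/2}$ -- already the optimal rate -- so the right-hand side is $\leq C(1+t)^{-1-s-d/2}$.

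Finally I would close the estimate by the iteration used in Lemma \ref{decaylem2} and Proposition \ref{decaypro1}, now at the $\dot H^s$ level: multiply the last inequality by $(1+t)^{s+d/2}$, use that $\|\Lambda^{s+1}u\|^2$ and $\|\Lambda^sg\|^2$ carry integrating-factor strength (taking the splitting constant $C_d$ large), control the residual $(1+t)^{s+d/2-1}\mathcal{H}^s$ by $N(t):=\sup_{[0,t]}(1+\tau)^{s+d/2}\mathcal{H}^s(\tau)$ together with the low-frequency bound, and close a Gronwall-type inequality for $N$, in which the precise $(1+t)^{-1}\ln^{-2}(e+t)$ weight on the $\rho$-dissipation plays a decisive role. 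Combined with the trivial bound on $[0,T_0]$ this yields $\mathcal{H}^s(t)\leq C(1+t)^{-s-d/2}$; taking square roots and adding back the low-frequency estimate gives $\|\Lambda^s(\rho,u)\|_{L^2}+\|\Lambda^sg\|_{L^2(\mathcal{L}^2)}\leq C(1+t)^{-d/4-s/2}$. The hard part throughout is that $\rho$ carries no honest dissipation at the $\dot H^s$ level: the coupling cross-term produces dissipation for $\Lambda^s\rho$ only on high frequencies and only at the borderline rate $(1+t)^{-1}\ln^{-2}(e+t)$, so the logarithmic weight in Lemma \ref{decayinH.s} cannot be removed and the moving frequency split is essential to keep the corrector harmless while still capturing all of frequency space as $t\to\infty$.
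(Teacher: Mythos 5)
Your proof is correct and follows the same strategy as the paper: apply Lemma \ref{decayinH.s}, use Schonbek's frequency splitting, control the low-frequency remainder with the improved $L^2$ decay, and close via a Gronwall inequality that hinges on the summability of the $(1+t)^{-1}\ln^{-2}(e+t)$ weight carried by the $\rho$-dissipation. The only technical divergence is in how $R=t$ is introduced: the paper keeps $R$ \emph{fixed} in the differential inequality of Lemma \ref{decayinH.s}, integrates over $[0,t]$ with that fixed $R$ (so $\frac{d}{dt}$ never sees a time-dependent $\sigma_R$), and only sets $R=t$ in the resulting integral inequality; you instead substitute $R=t$ directly into the differential form and justify this by arguing that the jumps of $\sigma_t$ occur along a discrete, summable sequence. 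Both are valid, though the paper's "integrate first, then optimize $R$" route sidesteps the need to control the jumps at all. One small but worthwhile correction: the low-frequency estimate $\|\Lambda^s(\rho,u)^{low}\|^2_{L^2}\lesssim(1+t)^{-s-d/2}$ requires the $E_0\lesssim(1+t)^{-d/2}$ bound of Proposition \ref{decaythe1}, as you correctly use, not merely the $(1+t)^{-d/4}$ rate of Proposition \ref{decaypro1} which the paper cites in its display \eqref{10ineq2} (an apparent typo there). Your observation that the corrector terms can be absorbed (by Young's inequality, splitting them between the $u$-dissipation and the logarithmically weak $\rho$-dissipation) is also accurate, and streamlines the passage to \eqref{10ineq6} relative to the paper, which carries them through the time integration before closing.
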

\begin{proof}
	According to Schonbek's strategy and Lemma \ref{decayinH.s} , we infer that
	\begin{align}\label{10ineq1}
		\frac{d}{dt'}&\left(\|\Lambda^s (\sqrt{\gamma}\rho,u)\|^2_{L^2} + \|\Lambda^s g\|^2_{L^2(\mathcal{L}^2)}+\frac{C_d\eta}{(1+t')\ln^2(e+t')}\mathop{\Sigma}\limits_{j\in\sigma_R}\langle\Lambda^{s}\dot{\Delta}_j\rho,\Lambda^{s-1} \dot{\Delta}_ju\rangle \right)\\ \notag
		&~~~~+ \frac{C_d}{1+t'}\int|\xi|^{2s}|\widehat{u}|^2d\xi + \frac{C_d\eta\gamma}{(1+t')\ln^2(e+t')}\int|\xi|^{2s}|\widehat{\rho}|^2d\xi + \|\Lambda^{s} g\|^2_{L^2(\mathcal{L}^2)} \\ \notag
		&\lesssim \frac{C_d}{1+t'}\int_{S(t')}|\xi|^{2s}|\widehat{u}|^2d\xi + \frac{\eta}{(1+t')\ln^2(e+t')}\int_{S(R)} |\xi|^{2s}(|\widehat{\rho}|^2 + |\widehat{u}|^2)d\xi \\ \notag
		&~~~~+(1+t')^{-\frac{d+2}{4}}\int_{S^c(t')}|\xi|^{2s}|\widehat{\rho}|^2d\xi+\frac{\eta}{(1+t')^2\ln^2(e+t')}\mathop{\Sigma}\limits_{j\in\sigma_R}\langle\Lambda^{s}\dot{\Delta}_j\rho,\Lambda^{s-1} \dot{\Delta}_ju\rangle~.~~~
	\end{align}
	Consider positive time $T_d$ sufficiently large. According to Proposition \ref{decaypro1} , we have
	\begin{align}\label{10ineq2}
		\frac{d}{dt'}&\left(\|\Lambda^s (\sqrt{\gamma}\rho,u)\|^2_{L^2} + \|\Lambda^s g\|^2_{L^2(\mathcal{L}^2)}+\frac{C_d\eta}{(1+t')\ln^2(e+t')}\mathop{\Sigma}\limits_{j\in\sigma_R}\langle\Lambda^{s}\dot{\Delta}_j\rho,\Lambda^{s-1} \dot{\Delta}_ju\rangle \right)\\ \notag
		&~~~~+ \frac{C_d}{1+t'}\int|\xi|^{2s}|\widehat{u}|^2d\xi + \frac{C_d\eta\gamma}{(1+t')\ln^2(e+t')}\int|\xi|^{2s}|\widehat{\rho}|^2d\xi + \|\Lambda^{s} g\|^2_{L^2(\mathcal{L}^2)} \\ \notag
		&\lesssim \frac{C_d}{(1+t')^{s+\frac{d}{2}+1}} + \frac{\eta}{(1+t')^{\frac{d}{2}+1}\ln^2(e+t')(1+R)^{s}}+\frac{\eta(1+R)}{(1+t')^2\ln^2(e+t')}\|\Lambda^{s}u\|^2_{L^2}~,
	\end{align}
for any $t'>T_d$. By performing a routine procedure, one can arrive at
	\begin{align}\label{10ineq3}
		&(1+t)^{s+\frac{d}{2}+2}\left(\|\Lambda^s (\sqrt{\gamma}\rho,u)\|^2_{L^2} + \|\Lambda^sg\|^2_{L^2(\mathcal{L}^2)}+\frac{C\eta}{(1+t)\ln^2(e+t)}\mathop{\Sigma}\limits_{j\in\sigma_R}\langle\Lambda^{s}\dot{\Delta}_j\rho,\Lambda^{s-1} \dot{\Delta}_ju\rangle \right)\\ \notag
		&~~~~~~~\lesssim C_0 + C_d(1+t)^2 +\frac{C\eta(1+t)^{s+2}}{(1+R)^{s}} + \int_0^t \frac{\eta(1+R)}{\ln^2(e+t')}(1+t')^{s+\frac{d}{2}}\|\Lambda^{s}u\|^2_{L^2} dt'~,
	\end{align}
	where $C_0 = C_{\gamma}\left(\|(\rho_0,u_0)\|^2_{H^s} + \|g_0\|^2_{H^s(\mathcal{L}^2)}+\|\langle q\rangle g_0\|^2_{H^{s-1}(\mathcal{L}^2)}\right)$ . It follows from $\xi \in S^c(t)$ that
	$$\frac{C_d}{1+t}\|\Lambda^{s-1}u^{high}\|^2_{L^2} \leq \|\Lambda^{s}u^{high}\|^2_{L^2}.$$
Thus, by considering small positive constant $\eta$, we obtain 
	\begin{align}\label{10ineq4}
		\|\Lambda^s (\sqrt{\gamma}\rho,u)\|^2_{L^2} + \|\Lambda^s g\|^2_{L^2(\mathcal{L}^2)}&+\frac{C_d\eta}{(1+t)ln^2(e+t)}\mathop{\Sigma}\limits_{j\in\sigma_t}\langle\Lambda^{s}\dot{\Delta}_j\rho,\Lambda^{s-1} \dot{\Delta}_ju\rangle \\ \notag
		&\geq \frac{1}{2}\left(\|\Lambda^s (\sqrt{\gamma}\rho,u)\|^2_{L^2} + \|\Lambda^s g\|^2_{L^2(\mathcal{L}^2)}\right)~.~~~~~~~~~~~~~~~~~~
	\end{align}
	Taking $R=t$ leads to
	\begin{align}\label{10ineq5}
		(1+t)^{s+\frac{d}{2}+2}&\left(\|\Lambda^s (\sqrt{\gamma}\rho,u)\|^2_{L^2} + \|\Lambda^s g\|^2_{L^2(\mathcal{L}^2)}\right) \\ \notag
		&\lesssim C_0 + (1+t)^2 + (1+t)\int_0^t\frac{C\eta}{\ln^2(e+t')}(1+t')^{s+\frac{d}{2}}\|\Lambda^{s}u\|^2_{L^2}dt'~.
	\end{align}
	Define $M(t)=\sup_{t'\in[0,t]}(1+t')^{s+\frac{d}{2}}\left(\|\Lambda^s (\sqrt{\gamma}\rho,u)\|^2_{L^2} + \|\Lambda^s g\|^2_{L^2(\mathcal{L}^2)}\right)$ , we obtain
	\begin{align}\label{10ineq6}
		M(t) \leq C_0 + C + C\int_0^t\frac{M(t')}{(1+t')\ln^2(e+t')}dt'~,
	\end{align}
	which implies
	\begin{align}\label{10ineq7}
		\|\Lambda^s (\rho,u)\|^2_{L^2} + \|\Lambda^s g\|^2_{L^2(\mathcal{L}^2)} \leq C(1+t)^{-s-\frac{d}{2}}~.
	\end{align}
	We thus complete the proof of Proposition \ref{decayth2} .
\end{proof}
\textbf{The proof of Theorem \ref{th2}:}\\
	It follows from Lemmas \ref{Lemma4} , \ref{Lemma5} and \eqref{1ineq4} that
	\begin{align}\label{11ineq1}
		\frac {d} {dt} \|g\|^2_{L^2(\mathcal{L}^{2})}+\| g\|^2_{L^2(\mathcal{L}^{2})}\leq C\|\nabla u\|^2_{L^2}.~~~~~~~~~~~~~~~~~~~~~~~~
	\end{align}
	Applying Proposition \ref{decaythe1} leads to
	\begin{align}\label{11ineq2}
		\|g\|^2_{L^2(\mathcal{L}^{2})} &\leq e^{-t}\|g_0\|^2_{L^2(\mathcal{L}^{2})} + C\int_0^t e^{-(t-s)}\|\nabla u\|^2_{L^2}ds \\ \notag
		& \leq e^{-t}\|g_0\|^2_{L^2(\mathcal{L}^{2})} + C\int_0^t e^{-(t-s)}(1+s)^{-\frac{d}{2}-1}ds \\ \notag
		& \leq C(1+t)^{-\frac{d}{2}-1}~.
	\end{align}
Applying $\Lambda^{s-1}$ to system $\eqref{eq1}_3$ and taking $L^2(\mathcal{L}^2)$ inner product with $\Lambda^{s-1}g$ , one can arrive at
	\begin{align}\label{11ineq3}
		~~\frac {d} {dt} \|\Lambda^{s-1} g\|^2_{L^2(\mathcal{L}^{2})}&+\|\Lambda^{s-1} g\|^2_{L^2(\mathcal{L}^{2})} \\ \notag
		&\leq C\|\Lambda^{s-1}u\|^2_{L^2}\|\nabla g\|^2_{H^{s-1}} + 
		C\|qg\|^2_{H^{s-1}}\|\Lambda^su\|^2_{L^2}~.~~~~~~~
	\end{align}
	Theorem \ref{th1} and Proposition \ref{decayth2} guarantee that
	\begin{align}\label{11ineq4}
		~\|\Lambda^{s-1} g\|^2_{L^2(\mathcal{L}^{2})}& \leq e^{-t}\|\Lambda^{s-1} g_0\|^2_{L^2(\mathcal{L}^{2})} +  C\int_0^t e^{-(t-s)}(1+s)^{-\frac{d}{2}-s} ds \\ \notag
		&\leq C(1+t)^{-\frac{d}{2}-s}~.
	\end{align}
	According to \eqref{11ineq2} , \eqref{11ineq4} and Lemma \ref{Lemma3} , we deduce that
	\begin{align}\label{11ineq5}
		\|\Lambda^{\nu} g\|_{L^2(\mathcal{L}^2)} &\leq \|g\|^{1-\frac{\nu}{s-1}}_{L^2(\mathcal{L}^2)}\|\Lambda^{s-1}g\|^{\frac{\nu}{s-1}}_{L^2(\mathcal{L}^2)} \\ \notag
		&\leq C(1+t)^{-\frac{d}{4}-\frac{\nu}{2}-\frac{1}{2}} .~~~~~~~~~~~~~~~
	\end{align}
	Similarly, we conclude that
	\begin{align}\label{11ineq6}
		\|\Lambda^{\gamma} g\|_{L^2(\mathcal{L}^2)} &\leq \|\Lambda^{s-1}g\|^{s-\gamma}_{L^2(\mathcal{L}^2)}\|\Lambda^{s}g\|^{1-s+\gamma}_{L^2(\mathcal{L}^2)} \\ \notag
		&\leq C(1+t)^{-\frac{d}{4}-\frac{s}{2}}~,
	\end{align}
	and
	\begin{align}\label{11ineq7}
		~\|\Lambda^{\sigma}(\rho,u)\|_{L^2(\mathcal{L}^2)} &\leq \|(\rho,u)\|^{1-\frac{\sigma}{s}}_{L^2(\mathcal{L}^2)}\|\Lambda^{s}(\rho,u)\|^{\frac{\sigma}{s}}_{L^2(\mathcal{L}^2)} ~~~~~~\\ \notag
		&\leq C(1+t)^{-\frac{d}{4}-\frac{\sigma}{2}}~.
	\end{align}
 We thus complete the proof of Theorem \ref{th2} .
\hfill$\Box$\\
\smallskip
\noindent\textbf{Acknowledgments} This work was
partially supported by the National Natural Science Foundation of China (No.12171493 and No.11671407), the Macao Science and Technology Development Fund (No. 098/2013/A3), and Guangdong Province of China Special Support Program (No. 8-2015),
and the key project of the Natural Science Foundation of Guangdong province (No. 2016A030311004).

%The authors thank the referee for valuable comments and suggestions.

\phantomsection
\addcontentsline{toc}{section}{\refname}
%添加参考文献到书签，宏包 hyperref
\bibliographystyle{abbrv} %plain ,%alpha, %abbrv
\bibliography{hookeanref}

\end{document}